\title{Free boundary regularity for a class of one-phase problems with non-homogeneous degeneracy.}
\author{\it by \smallskip \\
Jo\~{a}o Vitor da Silva,\footnote{\noindent \textsc{Jo\~ao Vitor da Silva}.
Universidade Estadual de Campinas - UNICAMP. Department of Mathematics. Campinas - SP, Brazil.
\texttt{E-mail address: jdasilva@unicamp.br}
}
\smallskip \qquad Giane C. Rampasso\footnote{\noindent \textsc{Giane Casari Rampasso}.
Universidade Estadual de Campinas - UNICAMP. Department of Mathematics. Campinas - SP, Brazil.
\texttt{E-mail address: girampasso@ime.unicamp.br}
}
\smallskip \qquad Gleydson C. Ricarte\footnote{\noindent \textsc{Gleydson Chaves Ricarte}.
Universidade Federal do Cear\'{a} - UFC. Department of Mathematics. Fortaleza - CE, Brazil - 60455-760.
\texttt{E-mail address: ricarte@mat.ufc.br}
}
\smallskip \\
\quad $\&$ \quad
\smallskip \\
Hern\'an A. Vivas\footnote{\noindent \textsc{Hern\'an Agust\'in Vivas}.
Instituto de C\'alculo, CABA, Argentina - Centro Marplatense de Investigaciones Matem\'aticas, Mar del Plata, Argentina.
\texttt{E-mail address: havivas@mdp.edu.ar}
}}
\newlength{\hchng}
\newlength{\vchng}
\def \R {\mathbb{R}}
\def \Div {\mathrm{div}}
\def \dist {\mathrm{dist}}
\def \Leb {\mathscr{L}^n}
\def \tr {\mathrm{tr}}
\newcommand{\defeq}{\mathrel{\mathop:}=}
\def \diam {\mathrm{diam}}
\newtheorem{theorem}{Theorem}[section]
\newtheorem{lemma}[theorem]{Lemma}
\newtheorem{corollary}[theorem]{Corollary}
\theoremstyle{definition}
\newtheorem{definition}[theorem]{Definition}
\theoremstyle{remark}
\newtheorem{remark}[theorem]{Remark}
\numberwithin{equation}{section}
\newcommand{\intav}[1]{\mathchoice {\mathop{\vrule width 6pt height 3 pt depth  -2.5pt
\kern -8pt \intop}\nolimits_{\kern -6pt#1}} {\mathop{\vrule width
5pt height 3  pt depth -2.6pt \kern -6pt \intop}\nolimits_{#1}}
{\mathop{\vrule width 5pt height 3 pt depth -2.6pt \kern -6pt
\intop}\nolimits_{#1}} {\mathop{\vrule width 5pt height 3 pt depth
-2.6pt \kern -6pt \intop}\nolimits_{#1}}}
\begin{document}
\maketitle

\begin{abstract}
We consider a one-phase free boundary problem governed by doubly degenerate fully non-linear elliptic PDEs with non-zero right hand side, which should be understood as an analog (non-variational) of certain double phase functionals in the theory of non-autonomous integrals. By way of brief elucidating example, such non-linear problems in force appear in the mathematical theory of combustion, as well as in the study of some flame propagation problems. In such an environment we prove that solutions are Lipschitz continuous and they fulfil a non-degeneracy property. Furthermore, we address the Caffarelli's classification scheme: Flat and Lipschitz free boundaries are locally $C^{1, \beta}$ for some $0< \beta(\verb"universal")<1$. Particularly, our findings are new even for the toy model
$$
\mathcal{G}_{p, q}[u] \defeq \left[|\nabla u|^p + \mathfrak{a}(x)|\nabla u|^q \right] \Delta u, \quad \text{for} \quad  0<p<q< \infty \quad  \text{and} \quad  0 \le \mathfrak{a} \in C^0(\Omega).
$$
We also bring to light other interesting doubly degenerate settings where our results still work. Finally, we present some key tools in the theory of degenerate fully nonlinear PDEs, which may have their own mathematical importance and applicability.
\end{abstract}

\medskip

\noindent{\bf \scriptsize{KEYWORDS}}: Free boundary regularity theory, doubly degenerate operators, one-phase problems.

\smallskip

\noindent{\bf MSC2010}: 35B65, 35J60, 35J70, 35R35. 


\section{Introduction}

In this work, we establish regularity estimates to solutions and their interfaces in a so-named one-phase free boundary problems (for short FBPs) for some very degenerate elliptic operators. Precisely, given a bounded domain $\Omega \subset \mathbb{R}^n$, we consider the doubly degenerate fully non-linear elliptic problem
\begin{eqnarray}\label{P 5.1. introduc}
	\left \{
		\begin{array}{rclcl}
			\mathcal{H}(x, \nabla u) F(x, D^2 u) & = & f(x) & \text{ in } & \Omega_{+}\left( u \right),\\
			|\nabla u| & = & \mathrm{Q}(x) & \text{ on } & \mathfrak{F}(u),
		\end{array}
	\right.
\end{eqnarray}
where $\mathcal{H}: \Omega \times \mathbb{R}^{n} \rightarrow \mathbb{R}$ and $F: \Omega \times \text{Sym}(n) \to \R$ satisfy appropriate structural assumptions (to be clarified soon), $\mathrm{Q} \geq 0$ is a continuous function, $f \in L^{\infty}\left( \Omega \right) \cap C\left( \Omega \right)$ and
$$
u\geq 0\text{ in }\Omega,\quad	\Omega^{+}(u)\defeq \{x \in \Omega : u(x) >0\} \quad \textrm{and} \quad \mathfrak{F}(u) \defeq \partial \Omega^{+}(u) \cap \Omega.
$$

One of the main signatures of the model case \eqref{P 5.1. introduc} is its interplay between two different kinds of degeneracy laws, in accordance with the values of the modulating function $\mathfrak{a}$.
$$
 \Omega \times \mathbb{R}^n \ni (x, \xi) \mapsto \mathcal{H}(x, \xi) \propto |\xi|^p + \mathfrak{a}(x)|\xi|^q \quad  0<p<q< \infty \quad  \text{and} \quad  0 \le \mathfrak{a} \in C^0(\Omega).
$$
Therefore, the diffusion process exhibit a non-uniformly elliptic and doubly degenerate feature, which mixes up two power-degenerate type operators (cf. \cite{ART15}, \cite{ART17}, \cite{BD14}, \cite{BD15}, \cite{BDL19}, \cite{daSLR21}, \cite{daSV20}, \cite{daSV21}, \cite{IS12} and \cite{IS16} for related regularity estimates and free boundary problems driven by second order operators with a single degeneracy law).

Mathematically, \eqref{P 5.1. introduc} consists of a model equation for a fully nonlinear prototype enjoying a non-homogeneous degeneracy law, which constitutes an analogous in non-divergence form of certain variational integrals from the Calculus of Variations with double phase structure:
\begin{equation}\label{DPF}\tag{\bf DPF}
	\displaystyle   \left(W_0^{1,p}(\Omega)+u_0, L^m(\Omega)\right) \ni (w, f) \mapsto \text{min} \int_{\Omega} \left(\frac{1}{p}|\nabla w|^p+\frac{\mathfrak{a}(x)}{q}|\nabla w|^q-fw\right)dx,
\end{equation}
where $\mathfrak{a}\in C^{0, \alpha}(\Omega,[0, \infty))$, for some $0< \alpha \leq 1$, $1<p\le q< \infty$ and $m \in (n, \infty]$, see \cite{BCM15}, \cite{CM15}, \cite{DeFM19} and \cite{DeFO19} and the references therein.

Let us remember that the mathematical studies for the model case given by \eqref{DPF} date back to Zhikov's fundamental works in the context of Homogenization problems and Elasticity theory, and they also represent new examples of the occurrence of Lavrentiev phenomenon, see \textit{e.g.} \cite{Zhi93} (see also \cite{BCM15} and \cite{CM15}). Moreover, minimizers to \eqref{DPF}, i.e. weak solutions of
$$
   - \Div(\mathcal{A}(x,\nabla u) \nabla u) = f(x) \qquad \text{with}\qquad \mathcal{A}(x,\xi) \defeq |\xi|^{p-2} +\mathfrak{a}(x)|\xi|^{q-2},
$$
play an important role in some contexts of Materials Science and engineering, where they describe the behavior of certain strongly anisotropic materials, whose hardening estates, connected to the gradient's growth exponents, change point-wisely. In a precise way, a mixture of two heterogeneous materials, with hardening $(p\&q)$-exponents, can be performed according to the intrinsic geometry of the null set of the modulation function $x \mapsto \mathfrak{a}(x)$.

Let us come back to our main purpose. Historically the mathematical investigation of the regularity of the free boundary $\mathfrak{F}(u)$ in problems like \eqref{P 5.1. introduc} has a large literature and it has presented wide advances in the last three decades or so. Let us summarize the \text{state-of-art} of such progresses:

\begin{enumerate}
	\item \textit{Uniform Elliptic case -  Variational approach.} The case $f = 0$ and $\mathcal{H}(x, \xi) = 1$ (for a second order linear operator), was widely studied in the Caffarelli \textit{et al}' seminal works \cite{AC81}, \cite{Caf87}, \cite{Caf89} by minimizing
	$$
	\displaystyle \mathcal{J}(u) \defeq \int_{\Omega \cap \{u>0\}} \mathfrak{f}(x, u(x), \nabla u(x))dx \longrightarrow \text{min},
	$$
	where was proved existence of a minimum and regularity of the free boundary via blow-up techniques, or via singular perturbation methods for the problem $\Delta u_{\varepsilon} = \beta_{\varepsilon}(u_{\varepsilon})$, see also Caffarelli-Salsa's nowadays classic monograph \cite{CS05}.
	
	\item \textit{Degenerate cases --  Variational approach.} In this setting, we may firstly quote the work due to Danielli and Petrosyan in \cite{DP05}, in which they established the regularity near ``flat points'' of the free boundary of non-negative solutions to the minimization problem
	$$
	\displaystyle \text{min} \mathcal{J}_p(u) \qquad \text{with} \qquad \mathcal{J}_p(u) \defeq \int_{\Omega} \left(|\nabla u|^p + \lambda_0^p\chi_{\{u>0\}}\right)dx,
	$$
	which is governed by the $p$-Laplacian operator, for $f = 0$, $1<p<\infty$ and $\lambda>0$. Further, Mart\'{i}nez and Wolanski in \cite{MW08} completely address the optimization problem of minimizing
	$$
	\displaystyle \text{min} \mathcal{J}_{\mathrm{G}}(u) \qquad \text{with} \qquad  \mathcal{J}_{\mathrm{G}}(u) \defeq \int_{\Omega} \left(\mathrm{G}(|\nabla u|) + \lambda_0\chi_{\{u>0\}}\right)dx,
	$$
	in an Orlicz-Sobolev scenario (cf. \cite{Chl18}), thereby extending the Alt-Caffarelli's theory in \cite{AC81} for such a context. In the aftermath, Fern\'{a}ndez Bonder \textit{et al} in \cite{FBMW10}, and Lederman and Wolanski in \cite{LW17}, \cite{LW19} and \cite{LW21} completed the study of existing, Lipschitz regularity and regularity of the free boundary for homogeneous/inhomogeneous free boundary problems driven by $p(x)$-Laplacian type operators as follows
	$$
	\left\{
	\begin{array}{rclcl}
		\Div(|\nabla u|^{p(x)-2} \nabla u) & = & f(x) & \text{ in } & \Omega_{+}\left( u \right),\\
		|\nabla u| & = & \lambda^{\ast}(x) & \text{ on } & \mathfrak{F}(u),
	\end{array}
	\right.
	$$
	
	\item \textit{Uniform elliptic case -- Non-variational approach.} In the context of fully non-linear elliptic equations, the homogeneous problem, i.e. $f=0$ (with $\mathcal{H}(x, \xi) \equiv 1$), we refer the reader to \cite{Fel01} and the refences therein. On the other hand, the non-homogeneous case,  $f\neq 0$ (with $\mathcal{H}(x, \xi) \equiv 1$), was studied in the series of De Silva et al's fundamental works \cite{DeS11} and \cite{DFS15} in the one and two-phase scenarios respectively.
	
\end{enumerate}

In spite of these prolific references, one phase problems are still far from being completely understood from a regularity perspective. In particular, according our scientific knowledge, there are no results in this direction for problems like \eqref{P 5.1. introduc} with $0<p < q < \infty$ (i.e. a doubly degenerate, non-variational structure). We also point out that the regularity of the free boundary for the inhomogeneous problem $f \neq 0$ has not been obtained even in the case of
$$
\mathcal{G}(u) \defeq \left[|\nabla u|^p + \mathfrak{a}(x)|\nabla u|^q \right] \tr(\mathbb{A}(x)D^2 u) \quad \text{with} \quad (\text{A2}) \quad \text{and} \quad \eqref{1.3} \quad \text{in force},
$$
where $\mathbb{A}: \Omega \to \text{Sym}(n)$ is a uniform elliptic and continuous matrix.

In this manuscript, precisely, we will study a very general form of such a problem with free boundaries, in which the diffusion process degenerates in a non-homogeneous fashion. In such a scenario, we establish optimal regularity results for solutions and fine properties of the free boundary. Furthermore, such findings are relevant from both the pure and the applied mathematics view points, as well as they may open the possibility of dealing with other interesting scenarios in elliptic regularity theory.

In turn, the FBP considered in \eqref{P 5.1. introduc} also appears as the limit of certain inhomogeneous singularly perturbed problems in the non-variational context of high energy activation model in combustion and flame propagation theories (cf. \cite{ART17}, \cite{RS15} and \cite{RT11} for related topics), whose simplest mathematical model (in this case) is given by: for each $\varepsilon>0$ fixed, we seek a non-negative profile $u^{\varepsilon}$ satisfying
$$
\left\{
\begin{array}{rclcl}
	\left[|\nabla u^{\varepsilon}|^p + \mathfrak{a}(x)|\nabla u^{\varepsilon}|^q \right] \Delta u^{\varepsilon} & = & \frac{1}{\varepsilon}\beta\left(\frac{u^{\varepsilon}}{\varepsilon}\right) + f_{\varepsilon}(x) & \mbox{in} & \Omega,\\
	u^{\varepsilon}(x) & = & g(x) & \mbox{on} & \partial \Omega,
\end{array}
\right.
$$
in the viscosity sense for suitable data $p, q \in (0, \infty)$, $\mathfrak{a}$, $g$, where $\beta_{\varepsilon}$ behaves singularly of order $\mbox{o} \left(\varepsilon^{-1} \right)$ near $\varepsilon$-level surfaces. In such a scenario, existing solutions are locally (uniformly) Lipschitz continuous (see, \cite[Theorem 1.4]{daSJR21}). Thus, up to a subsequence, there exists a function $u_0$, obtained as the uniform limit of $u^{\varepsilon_{k_j}}$, as $\varepsilon_{k_j} \to 0$. Furthermore, such a limiting profile satisfies
\[
\left[|\nabla u_0|^p + \mathfrak{a}(x)|\nabla u_0|^q \right] \Delta u_0 = f_0(x)
\]
for an appropriate RHS $f_0\geq 0$ in the viscosity sense (see also \cite{RST17} for a non-variational one-phase problem driven by a strongly degenerate operator).

Therefore, we are able to apply our results to limit functions of such inhomogeneous singular perturbation problems for the doubly degenerate fully nonlinear operator that we have addressed in \cite{daSJR21}.

It is worthwhile to highlight that we can consider/recover fully nonlinear models of degenerate/singular type as follow
$$
\mathcal{G}_p(x, \xi, \mathrm{X}) \defeq |\xi|^{p}F(x, \mathrm{X}) \quad (\text{with (A0)-(A2) in force})
$$
in any Euclidian dimension and such that $-1<p< \infty$ (cf. \cite{LR18I}). Particularly, we may apply our findings to limit profiles of inhomogeneous singular perturbation problems for fully nonlinear operators of degenerate type, which they were addressed in \cite{ART17} by third author \text{et al}.

\subsection{Assumptions and statement of the main results}

Next, we will present the structural assumptions to be made throughout this work.

\begin{itemize}
  \item[(A0)]({{\bf Continuity and normalization condition}})
  $$
  \text{Fixed}\,\, \Omega \ni x \mapsto F(x, \cdot) \in C^{0}(\text{Sym}(n)) \quad  \text{and} \quad F(\cdot, \text{O}_n) = 0
  $$
  where $\text{O}_n$ is the zero matrix; this normalizing assumption can be impose without loss of generality.

  \item[(A1)]({{\bf Uniform ellipticity}}) For any pair of matrices $\mathrm{X}, \mathrm{Y}\in Sym(n)$
\[
\mathscr{P}^{-}_{\lambda,\Lambda}(\mathrm{X}-\mathrm{Y})\leq F(x, \mathrm{X})-F(x, \mathrm{Y})\leq \mathscr{P}^{+}_{\lambda,\Lambda}(\mathrm{X}-\mathrm{Y})
\]
where $\mathscr{P}^{\pm}_{\lambda,\Lambda}$ stand for \textit{Pucci's extremal operators} given by
$$
   \mathscr{P}_{\lambda, \Lambda}^-(\mathrm{X}) \defeq \lambda\sum_{e_i>0}e_i(\mathrm{X})+\Lambda\sum_{e_i<0}e_i(\mathrm{X})\quad\textrm{ and }\quad \mathscr{P}_{\lambda, \Lambda}^+(X)\defeq \Lambda\sum_{e_i>0}e_i(\mathrm{X})+\lambda\sum_{e_i<0}e_i(\mathrm{X})
$$
for \textit{ellipticity constants} $0<\lambda\leq \Lambda< \infty$, where $\{e_i(\mathrm{X})\}_{i}$ are the eigenvalues of $\mathrm{X}$.

Moreover, for our Lipschitz estimates, we must require some sort of uniform continuity assumption on the coefficients:

  \item[(A2)]({{\bf $\omega-$continuity of coefficients}}) There exist a uniform modulus of continuity $\omega: [0, \infty) \to [0, \infty)$ and a constant $\mathrm{C}_{\mathrm{F}}>0$ such that
$$
\Omega \ni x, x_0 \mapsto \Theta_{\mathrm{F}}(x, x_0) \defeq \sup_{\mathrm{X} \in  Sym(n) \atop{\mathrm{X} \neq 0}}\frac{|F(x, \mathrm{X})-F(x_0, \mathrm{X})|}{\|\mathrm{X}\|}\leq \mathrm{C}_{\mathrm{F}}\omega(|x-x_0|),
$$
which measures the oscillation of coefficients of $F$ around $x_0$. We will denote $\Theta_{\mathrm{F}}(x,0)$ simply by $\Theta_{\mathrm{F}}(x)$. Finally, we define
$$
\|F\|_{C^{\omega}(\Omega)} \defeq \inf\left\{\mathrm{C}_{\mathrm{F}}>0: \frac{\Theta_{\mathrm{F}}(x, x_0)}{\omega(|x-x_0|)} \leq \mathrm{C}_{\mathrm{F}}, \,\,\, \forall \,\,x, x_0 \in \Omega, \,\,x \neq x_0\right\}.
$$
\end{itemize}

In our studies, the diffusion properties of the model \eqref{P 5.1. introduc} degenerate along an \textit{a priori} unknown set of singular points of existing solutions:
$$
   \mathcal{S}(u, \Omega) \defeq \{x \in \Omega: |\nabla u(x)| = 0\}.
$$
For this reason, we will enforce that $\mathcal{H}:\Omega \times \R^n \to [0, \infty)$ behaves as
\begin{equation}\label{1.2}
     L_1 \cdot \mathcal{K}_{p, q, \mathfrak{a}}(x, |\xi|) \leq \mathcal{H}(x, \xi)\leq L_2 \cdot \mathcal{K}_{p, q, \mathfrak{a}}(x, |\xi|)
\end{equation}
for constants $0<L_1\le L_2< \infty$, where
\begin{equation}\label{N-HDeg}\tag{\bf N-HDeg}
   \mathcal{K}_{p, q, \mathfrak{a}}(x, |\xi|) \defeq |\xi|^p+\mathfrak{a}(x)|\xi|^q, \,\,\,\text{for}\,\,\, (x, \xi) \in \Omega \times \R^n.
\end{equation}

In addition, for the non-homogeneous degeneracy \eqref{N-HDeg}, we suppose that the exponents $p, q$ and the modulating function $\mathfrak{a}(\cdot)$ fulfil
\begin{equation}\label{1.3}
   0< p \le q< \infty \qquad \text{and} \qquad \mathfrak{a} \in C^0(\Omega, [0, \infty)).
\end{equation}

Finally, we will assume the following condition: there exist a universal constant $\mathrm{C}_{\mathfrak{a}}>0$ and a modulus of continuity $\omega_{\mathfrak{a}}: [0, \infty) \to [0, \infty)$ such that
\begin{equation}\label{Cont-H}
   |\mathcal{H}(x, \xi)-\mathcal{H}(y, \xi)| \le \mathrm{C}_{\mathfrak{a}}\omega_{\mathfrak{a}}(|x-y|)|\xi|^q  \qquad \forall\,\,\,(x, y, \xi) \in \Omega \times \Omega\times \R^n.
\end{equation}

Now, we can start stating our main results. In a first point, we will establish optimal Lipschitz regularity to solutions of \eqref{P 5.1. introduc} (see Section \ref{section2} for the definition of viscosity solutions).

\begin{theorem}[{\bf Optimal Lipschitz regularity}] \label{Lipschitz}
Let $\mathrm{Q} \in C^0(B_1; [0, \infty)) \cap L^{\infty}(B_1; [0, \infty))$ and $u$ be a bounded viscosity solution to (\ref{P 5.1. introduc}) in $B_1$. Then, there exists a universal constant $\mathrm{C_1} = \mathrm{C_1}(n, \lambda, \Lambda, \mathfrak{a}, L_1, p, q)>0$ such that
\begin{equation}\label{estimate1-Lipschitz}
	u(x_0) \leq \mathrm{C_1}.\left(\|u\|_{L^{\infty}(B_1)}+\|\mathrm{Q}\|_{L^{\infty}(B_1)} + \max\left\{\left\|f\right\|_{L^{\infty}(B_1)}^{\frac{1}{p+1}}, \left\|f\right\|_{L^{\infty}(B_1)}^{\frac{1}{q+1}}\right\}\right)\dist(x_0, \mathfrak{F}(u)),
\end{equation}
for all $x_0 \in B_{1/2}$; i.e., solutions have at most linear behavior close to free boundary points.
Particularly, there exists $C_2=C_2(n, \lambda, \Lambda, L_1, p, q, \|F\|_{\mathcal{C}^\omega})>0$ such that
\begin{equation}\label {estimate2-Lipschitz}
\|\nabla u\|_{L^{\infty}(B_{1/2})} \leq \mathrm{C_2}.\left(\|u\|_{L^{\infty}(B_1)}+\|\mathrm{Q}\|_{L^{\infty}(B_1)} + \max\left\{\left\|f\right\|_{L^{\infty}(B_1)}^{\frac{1}{p+1}}, \left\|f\right\|_{L^{\infty}(B_1)}^{\frac{1}{q+1}}\right\}+1\right).
\end{equation}
\end{theorem}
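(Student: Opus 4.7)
Estimate \eqref{estimate1-Lipschitz} is the main step, after which \eqref{estimate2-Lipschitz} will follow by combining it with the interior $C^{1,\alpha}$ estimate for the doubly degenerate operator $\mathcal{H}(x,\nabla u)F(x,D^2u)=f$ established in the previous sections (and in \cite{daSJR21}). Up to the preliminary normalization $u\mapsto u/K$ with $K\simeq\|u\|_{L^\infty(B_1)}+\|\mathrm{Q}\|_{L^\infty(B_1)}+\max\{\|f\|_{L^\infty(B_1)}^{1/(p+1)},\|f\|_{L^\infty(B_1)}^{1/(q+1)}\}$, I may assume these three quantities are universally bounded and must show $u(x_0)\le C\,\dist(x_0,\mathfrak{F}(u))$ and $|\nabla u(x_0)|\le C$ with a universal constant $C$.

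\textbf{Linear growth via blow-up and contradiction.} To prove \eqref{estimate1-Lipschitz}, I argue by contradiction and compactness. Assume there is a sequence of (normalized) viscosity solutions $u_k$ and points $x_k\in\Omega^+(u_k)\cap B_{1/2}$ for which
\[
M_k\defeq\frac{u_k(x_k)}{d_k}\longrightarrow+\infty,\qquad d_k\defeq\dist(x_k,\mathfrak{F}(u_k)).
\]
Select $y_k\in\mathfrak{F}(u_k)$ with $|x_k-y_k|=d_k$ and rescale
\[
v_k(y)\defeq\frac{u_k(y_k+d_k y)}{d_k M_k},\qquad y\in B_1,
\]
so that $v_k\ge 0$, $v_k(0)=0$, $0\in\mathfrak{F}(v_k)$, and $v_k(\zeta_k)=1$ at $\zeta_k=(x_k-y_k)/d_k\in\partial B_1$. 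A direct computation, using the structure \eqref{N-HDeg}, shows $v_k$ solves (in the viscosity sense) an equation of the form
\[
\bigl[|\nabla v_k|^p+\mathfrak{a}(y_k+d_k y)\,M_k^{q-p}|\nabla v_k|^q\bigr]\,F_k(y,D^2 v_k)=\frac{d_k\,f(y_k+d_k y)}{M_k^{\,p+1}}\longrightarrow 0,
\]
where $F_k$ is still $(\lambda,\Lambda)$-elliptic by (A1) (scaling factors in the Hessian are absorbed into $F_k$), and whose coefficients converge thanks to (A2) and \eqref{Cont-H}. The free boundary condition rescales to $|\nabla v_k|=\mathrm{Q}(y_k+d_k\cdot)/M_k\to 0$ on $\mathfrak{F}(v_k)$.

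\textbf{Passage to the limit.} Invoking the interior H\"older estimates available for the doubly degenerate operator, the family $\{v_k\}$ is equicontinuous on compact subsets of $B_1$, and along a subsequence $v_k\to v_\infty$ locally uniformly. Stability of viscosity solutions yields a non-negative limit $v_\infty$ which is a solution of an equation that is $(\lambda,\Lambda)$-elliptic off $\{\nabla v_\infty=0\}$, attains the value $1$ on $\overline{B_1}$, and satisfies $v_\infty(0)=0$ with $|\nabla v_\infty(0)|=0$ coming from the vanishing of the rescaled free boundary data. A (degenerate) Hopf boundary-point lemma applied to $v_\infty$ at $0$ then contradicts the interior positivity, yielding the desired contradiction and hence \eqref{estimate1-Lipschitz}.

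\textbf{Gradient bound and main obstacle.} For \eqref{estimate2-Lipschitz}, fix $x_0\in B_{1/2}$ with $u(x_0)>0$ and set $r\defeq\min\{\dist(x_0,\mathfrak{F}(u)),1/4\}$. By \eqref{estimate1-Lipschitz}, $\sup_{B_r(x_0)}u\le Cr$, and the interior $C^{1,\alpha}$ estimate applied on $B_{r/2}(x_0)\subset\Omega^+(u)$ gives
\[
|\nabla u(x_0)|\le C\Bigl(r^{-1}\sup_{B_r(x_0)}u+\max\{\|f\|_{L^\infty(B_1)}^{1/(p+1)},\|f\|_{L^\infty(B_1)}^{1/(q+1)}\}\Bigr)\le C,
\]
which is the desired bound (points with $\dist(x_0,\mathfrak{F}(u))\ge 1/4$ are handled by the interior estimate alone). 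The real difficulty is the blow-up step: because $|\xi|^p+\mathfrak{a}(x)|\xi|^q$ is \emph{non-homogeneous}, the rescaled operator carries the factor $M_k^{q-p}$ in front of $|\nabla v_k|^q$, which may blow up or vanish depending on whether $\mathfrak{a}(y_k)\to 0$ or stays bounded below. One must therefore verify that in either regime the effective limit equation is uniformly elliptic off $\{\nabla v_\infty=0\}$ and admits a Hopf-type lemma compatible with the \emph{degenerate} free boundary condition $|\nabla v_\infty|=0$ carried across the limit; this is exactly where the continuity conditions (A2) and \eqref{Cont-H} and the doubly degenerate regularity toolkit developed earlier in the paper are indispensable.
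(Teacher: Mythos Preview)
Your strategy is genuinely different from the paper's. The paper does \emph{not} argue by contradiction/compactness; it works directly on the single solution $u$. After rescaling $v(x)=u(x_0+rd_0x)/d_0$ with $d_0=\dist(x_0,\mathfrak{F}(u))$, it builds an explicit radial barrier $\Phi(x)=\mu_0\bigl(e^{-\delta|x|^2}-e^{-\delta}\bigr)$ in the annulus $B_1\setminus \overline{B_{1/2}}$, checks that $\Phi$ is a strict subsolution of the scaled PDE, applies the Comparison Principle, and then reads off the free boundary condition at the point of $\partial B_1$ realizing the distance to obtain $\mu_0\delta e^{-\delta}\le \|\mathrm{Q}\|_{L^\infty}$. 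This bounds $\inf_{\partial B_{1/2}}v$, and Harnack transfers it to $\sup_{B_{1/2}}v$, giving \eqref{estimate1-Lipschitz}; the gradient bound \eqref{estimate2-Lipschitz} then follows from the interior $C^{1,\beta}$ estimate exactly as you outline. The payoff of the direct route is that no limiting procedure is needed, so the non-homogeneous degeneracy never has to be ``stabilized'' under blow-up.

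Your sketch, by contrast, has two gaps that you yourself flag but do not close. First, the compactness step is stated for ``compact subsets of $B_1$'', but $0\in\mathfrak{F}(v_k)$ lies on the free boundary, where the interior $C^{1,\alpha}$ estimates do not apply; you only have equicontinuity on compacts of $B_1(\zeta_k)\subset\Omega^+(v_k)$, so extracting $v_\infty$ with $v_\infty(0)=0$ and applying a Hopf-type argument \emph{at} $0$ requires additional work (e.g.\ a barrier argument at finite $k$, which is precisely what the paper does). Second, the factor $\mathfrak{a}(y_k)M_k^{\,q-p}$ in the rescaled degeneracy may diverge, and while you correctly observe that one must split into the regimes $\mathfrak{a}(y_k)M_k^{\,q-p}$ bounded versus unbounded (yielding a limit of the form $|\nabla v_\infty|^{s}F_\infty(D^2v_\infty)=0$ with $s\in\{p,q\}$), you do not actually carry out either case, nor do you verify the degenerate Hopf lemma for the limit. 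These are not insurmountable, but as written the argument is incomplete; the paper's barrier-plus-Harnack proof sidesteps both issues entirely.
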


Once the (optimal) growth control away from the free boundary is obtained, the next point of interest is the \textit{non-degeneracy} of solutions, that controls the behavior from below:

\begin{theorem}[{\bf Non-degeneracy of solutions}] \label{Nondeg}
Let $\mathrm{Q} \in C^0(B_1; [0, \infty)) \cap L^{\infty}(B_1; [0, \infty))$ and $u$ be a bounded viscosity solution to (\ref{P 5.1. introduc}) in $B_1$. Further, suppose that $\mathfrak{F}(u)$ is a Lipschitz graph in $B_1$ with $\mathfrak{F}(u) \cap B_{1/2}^{+}(u) \neq \emptyset$. There exists a universal $\eta_0\in(0,1)$ a universal constant $\mathrm{C}_{\ast} = \mathrm{C}(n, \lambda, \Lambda, p, q, \|F\|_{C^{\omega}(B_1)})>0$ such that if
\[
\|\mathrm{Q}-1\|_{L^{\infty}(B_1)}<\eta_0
\]
then
$$
u(x_0) \geq \mathrm{C}_{\ast}.\dist(x_0, \mathfrak{F}(u)),
$$
for all $x_0 \in B^{+}_{1/2}(u)$;  i.e. solutions growth at least in a linear fashion close to free boundary points.
\end{theorem}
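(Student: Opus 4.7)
The plan is to argue by contradiction and compare $u$ against a carefully tuned radial barrier whose slope on its own zero set is strictly below $\|\mathrm{Q}\|_{L^\infty}$, thereby violating the free boundary condition. This is the classical Caffarelli-Salsa strategy (see \cite{CS05}) adapted to the doubly degenerate operator. The first reduction is to establish the linear lower bound at free boundary points: there exist universal $c_0, r_0 > 0$ such that
\[
\sup_{B_r(y_0)} u \; \geq \; c_0\, r \qquad \forall\; y_0 \in \mathfrak{F}(u)\cap B_{3/4},\; r \in (0, r_0).
\]
Once this is in hand, given $x_0 \in B_{1/2}^+(u)$ with $d \defeq \operatorname{dist}(x_0, \mathfrak{F}(u))$, pick $y_0 \in \mathfrak{F}(u)$ with $|x_0 - y_0| = d$, apply the claim in $B_{2d}(y_0)$ to get a point $z_0$ with $u(z_0) \gtrsim d$, and transfer this lower bound to $x_0$ via an interior Harnack chain inside $\Omega^+(u)$. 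The Lipschitz graph hypothesis on $\mathfrak{F}(u)$ supplies the non-tangential cone condition needed to build the chain of comparable balls staying in the positivity set, and the Harnack inequality for the operator $\mathcal{H}(x, \nabla u) F(x, D^2 u) = f$ (which in $\Omega^+(u)$ is uniformly elliptic up to the factor $\mathcal{K}_{p,q,\mathfrak{a}}$) yields the comparability.

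To prove the claim at a free boundary point, suppose by contradiction that $\sup_{B_r(y_0)} u < \varepsilon\, r$ with $\varepsilon$ arbitrarily small. Rescale $v(x) \defeq u(y_0 + rx)/r$, which is a non-negative viscosity solution in $B_1$ of a doubly degenerate equation of the same structure with right-hand side $r\, f(y_0 + r\cdot)$ (and with $F$, $\mathcal{H}$ still satisfying (A0)-(A2) and \eqref{Cont-H} with uniform constants), obeying the free boundary condition $|\nabla v| = \mathrm{Q}(y_0 + r\cdot) \in (1-\eta_0, 1+\eta_0)$ on $\mathfrak{F}(v)$, and with $\sup_{B_1} v < \varepsilon$, $v(0) = 0$. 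On the annulus $A \defeq B_1 \setminus B_{1/2}$, design a radial barrier of the form
\[
\Phi(x) \; \defeq \; \alpha\bigl(|x|^{-\gamma} - 1\bigr),
\]
with universal $\alpha, \gamma$ chosen so that (i) $\Phi = 0$ on $\partial B_1$; (ii) $\Phi \geq \varepsilon$ on $\partial B_{1/2}$; (iii) $\max_{\overline{A}}|\nabla \Phi| < 1 - \eta_0$; and (iv) $\mathcal{H}(x, \nabla \Phi)\, F(x, D^2 \Phi) \geq r\,\|f\|_{L^\infty}$ in $A$. Checking (iv) reduces, via uniform ellipticity and $\mathscr{P}^-_{\lambda, \Lambda}$, to a pointwise inequality involving the radial Hessian eigenvalues of $\Phi$ multiplied by $\mathcal{K}_{p,q,\mathfrak{a}}(x, |\nabla \Phi|)$, which is satisfied for all small enough $r$ once $\gamma$ is taken sufficiently large relative to $n-1$.

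Comparison in $A$ (applied to the doubly degenerate fully nonlinear operator, whose comparison principle is already available from the preliminaries invoked in the paper) then gives $v \leq \Phi$ in $A$. Hence $v$ must vanish at some $\bar x \in \overline{A} \cap \overline{B_1}$, and at this touching point one has both $|\nabla v(\bar x)| \leq |\nabla \Phi(\bar x)| < 1 - \eta_0$ and, by the free boundary condition, $|\nabla v(\bar x)| = \mathrm{Q}(y_0 + r\bar x) > 1 - \eta_0$: contradiction. The main obstacle I expect is the barrier construction itself: the non-homogeneous degeneracy factor $|\xi|^p + \mathfrak{a}(x)|\xi|^q$ forces one to balance two distinct scales in the gradient simultaneously with the condition $|\nabla \Phi| < 1$, while still keeping $F(x, D^2 \Phi)$ positive and large enough to absorb the right-hand side $r\|f\|_{L^\infty}$; the $\omega$-continuity assumption (A2) and \eqref{Cont-H} are what allow one to freeze coefficients at $y_0$ and pass from the radial model computation to the actual PDE with only an error that vanishes as $r \to 0$.
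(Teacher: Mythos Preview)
Your barrier argument has a structural gap that breaks the contradiction. With $\Phi(x)=\alpha(|x|^{-\gamma}-1)$ and $\gamma$ large, the radial Hessian eigenvalue dominates and $\mathscr{P}^{-}_{\lambda,\Lambda}(D^2\Phi)>0$; together with your condition (iv) this makes $\Phi$ a \emph{subsolution} of the PDE, not a supersolution. Comparison then yields at best $\Phi\le v$, not $v\le\Phi$. Even if you flip the sign of the barrier to obtain a supersolution, the comparison in $A=B_1\setminus B_{1/2}$ still fails: on the outer boundary $\partial B_1$ you have $\Phi=0$ while only $v\ge 0$ is known, so $v\le\Phi$ is \emph{not} available as boundary data. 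Finally, even granting $v\le\Phi$ in $A$, you would only get $v=0$ on $\partial B_1$, which does not place any such point in $\mathfrak{F}(v)$; the free boundary of $v$ could lie entirely in the open annulus, where $\Phi>0$ and there is no touching.

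The paper's proof circumvents all three issues by a sliding argument rather than a static comparison. It takes the genuine supersolution $\widehat{\Phi}(x)=\mathrm{c}\,(1-\Phi(x))$ (with $\Phi$ the exponential barrier from the Lipschitz estimate, so that $F(x,D^2\widehat{\Phi})\le\mathscr{P}^{+}(D^2\widehat{\Phi})<0$), tunes $\mathrm{c}$ so that $|\nabla\widehat{\Phi}|<1-\eta_0$ on the inner sphere, and then translates $\widehat{\Phi}_t(x)=\widehat{\Phi}(x+te_n)$ in the graph direction. The Lipschitz graph hypothesis is used here (not only in the Harnack chain): it guarantees $v\equiv 0$ far enough in the $-e_n$ direction, so $\widehat{\Phi}_t\ge v$ for large $t$, and sliding down to the first contact forces the touching point $\hat{x}$ onto the inner sphere where $v(\hat{x})=\mathrm{c}$. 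Only then is the Harnack chain invoked to carry this lower bound back to the center. Your outline uses the Lipschitz hypothesis only for the chain; without it in the barrier step, the direct comparison cannot be closed.
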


We will also develop the regularity theory of $\mathfrak{F}(u)$. Precisely, we will adapt the technique presented in \cite{DeS11} to prove that flat free boundaries are $C^{1, \beta}$:

\begin{theorem}[{\bf Flatness implies $C^{1,\beta}$}]
\label{th 5.1. introd.}
Let $u$ be a viscosity solution to (\ref{P 5.1. introduc}) in $B_1$. Suppose that $0 \in \mathfrak{F}\left( u \right),\:\mathrm{Q}\left(0 \right)= 1$ and $F(0,X)$ is uniformly elliptic. Then, there exists a constant $\tilde{\varepsilon}(\verb"universal") > 0$ such that, if the graph of $u$ is $\tilde{\varepsilon}$-flat in $B_1$, i.e.
\begin{eqnarray*}
\label{t 5.1.1. introd.}
\left(x_{n} - \tilde{\varepsilon}\right)^{+} \leq u \left( x \right) \leq \left( x_{n} + \tilde{\varepsilon} \right)^{+} \ \ \mbox{for} \ \ x \in B_{1},
\end{eqnarray*}
and
\begin{eqnarray*}
\label{t 5.1.2. introd.}
\max\left\{\Vert f \Vert_{L^{\infty}\left( B_{1} \right)}, \ \ \left[ \mathrm{Q} \right]_{C^{0,\alpha}\left( B_{1}\right)}, \|F\|_{C^\omega(B_1)}\right\} \leq \tilde{\varepsilon},
\end{eqnarray*}
then $\mathfrak{F}\left( u \right)$ is $C^{1,\beta}$ in $B_{1/2}$ for some (universal) $\beta\in(0,1)$.
\end{theorem}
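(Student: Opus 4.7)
The strategy is to adapt De Silva's viscosity scheme from \cite{DeS11} to our doubly degenerate setting. The guiding observation is that $\tilde{\varepsilon}$--flatness forces $u$ to be a small perturbation of the one-plane solution $(x_n)^+$ in the positive phase, so $|\nabla u|\approx 1$ there. Consequently $\mathcal{H}(x,\nabla u)$ is trapped between two positive universal constants thanks to \eqref{1.2} and \eqref{Cont-H}, and the equation reduces to the uniformly elliptic perturbation
\[
F(x,D^2 u)=\frac{f(x)}{\mathcal{H}(x,\nabla u)},\qquad \Bigl\|\tfrac{f}{\mathcal{H}(\cdot,\nabla u)}\Bigr\|_{L^\infty}\lesssim \tilde{\varepsilon}.
\]
All the hard analysis is performed on this linearized uniformly elliptic picture and then transferred back.

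First I would prove a \emph{partial Harnack inequality}: there exist universal constants $c,\tilde{\varepsilon}_0\in(0,1)$ such that if $(x_n-\tilde{\varepsilon})^+\le u\le(x_n+\tilde{\varepsilon})^+$ in $B_1$ with $\tilde{\varepsilon}\le\tilde{\varepsilon}_0$, then in $B_{1/20}$ either $u\ge (x_n-(1-c)\tilde{\varepsilon})^+$ or $u\le(x_n+(1-c)\tilde{\varepsilon})^+$. The proof is by applying the classical weak Harnack inequality for uniformly elliptic operators to $w=u-(x_n-\tilde{\varepsilon})$ (or to its mirror version) in the region $\{x_n\ge \tilde{\varepsilon}/2\}\cap B_{1/2}$, together with an explicit radial sub/supersolution that propagates pointwise separation on a set of positive measure into a definite separation in $B_{1/20}$. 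As a direct consequence, the rescaled height $\tilde{u}_{\tilde{\varepsilon}}(x):=(u(x)-x_n)/\tilde{\varepsilon}$ admits Hölder estimates uniform in $\tilde{\varepsilon}$.

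Next I would establish the \emph{improvement of flatness}: there exist universal constants $\rho,\eta\in(0,1)$ so that if $u$ is $\tilde{\varepsilon}$--flat in $B_1$ with $\tilde{\varepsilon}$ small, then $u$ is $\eta\tilde{\varepsilon}$--flat in $B_\rho$ after rotating the normal by $O(\tilde{\varepsilon})$. This step is by contradiction and compactness: assuming failure along sequences $u_k,F_k,f_k,\mathrm{Q}_k$ with $\tilde{\varepsilon}_k\to 0$, the uniform Hölder bounds from the partial Harnack let $\tilde u_k:=(u_k-x_n)/\tilde{\varepsilon}_k$ converge (up to subsequence) to some $\tilde u_\infty$ on compact subsets of $B_1\cap\{x_n\ge 0\}$. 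Using (A1)--(A2), \eqref{1.2}--\eqref{Cont-H}, $|\nabla u_k|\to 1$ in the bulk and $\|\mathrm{Q}_k-1\|_\infty\to 0$, a careful viscosity passage to the limit shows that $\tilde u_\infty$ solves a linearized Neumann problem
\[
\begin{cases} \mathcal{L}_\infty(D^2\tilde u_\infty)=0 & \text{in } B_{1/2}\cap\{x_n>0\},\\ \partial_n\tilde u_\infty=0 & \text{on } B_{1/2}\cap\{x_n=0\},\end{cases}
\]
with $\mathcal{L}_\infty$ a uniformly elliptic linear operator obtained as the linearization of $F(0,\cdot)$ at the origin. Classical boundary $C^{1,\alpha}$ regularity for this oblique/Neumann linear problem yields a plane approximation of $\tilde u_\infty$ near $0$, contradicting the standing assumption.

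Finally, I would iterate: the rescaling $u_\rho(x)=u(\rho x)/\rho$ solves \eqref{P 5.1. introduc} with the same structural constants but with $f$ replaced by $\rho f(\rho\,\cdot)$, $[\mathrm{Q}]_{C^{0,\alpha}}$ by $\rho^\alpha[\mathrm{Q}]_{C^{0,\alpha}}$, and $\|F\|_{C^\omega}$ replaced by a smaller modulus; the flatness hypothesis is preserved at smaller scale. A standard induction then produces unit vectors $\nu_k\in S^{n-1}$ with $|\nu_{k+1}-\nu_k|\le C\eta^k$ and $\tilde{\varepsilon}_k=\eta^k\tilde{\varepsilon}$--flatness of $u$ along $\nu_k$ in $B_{\rho^k}$; choosing $\beta\in(0,1)$ with $\eta=\rho^\beta$ gives $\mathfrak{F}(u)\in C^{1,\beta}$ in $B_{1/2}$. \emph{The main obstacle} will be the rigorous limit in the free boundary condition: one must prevent the positive phases $\Omega^+(u_k)$ from becoming thin and degenerating in the limit. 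This is where Theorem \ref{Nondeg} is essential — its smallness hypothesis $\|\mathrm{Q}-1\|_{L^\infty}<\eta_0$ matches exactly our flatness regime — together with barrier comparisons built from explicit one-dimensional solutions of $\mathcal{H}(x,\nabla\cdot)F(x,D^2\cdot)=0$ of the form $(x_n+\text{const})^+$, which work precisely because of the non-homogeneous degeneracy structure \eqref{N-HDeg}.
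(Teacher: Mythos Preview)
Your overall scheme matches the paper's adaptation of De~Silva's method: partial Harnack $\Rightarrow$ uniform H\"older bound on $(u-x_n)/\varepsilon$ $\Rightarrow$ compactness and limiting Neumann problem $\Rightarrow$ improvement of flatness $\Rightarrow$ iteration. There are, however, three points where your sketch diverges from what the argument actually needs.

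First, and most importantly, the partial Harnack step is not as simple as applying ``the classical weak Harnack inequality for uniformly elliptic operators'' to $w=u-x_n$. The function $w$ satisfies $\mathcal{H}(x,\nabla w+e_n)F(x,D^2w)=f$, and this operator degenerates wherever $\nabla w\approx -e_n$; the $\tilde{\varepsilon}$--flatness is only an $L^\infty$ statement on $u$, not on $\nabla u$, so one cannot assume a priori that $|\nabla u|\approx 1$ throughout the positive phase. This is precisely the obstacle highlighted in Section~1.2. The paper's Lemma~\ref{Harnack lemma 5.2} therefore splits into two cases according to whether $|\nabla u(x_0)|\geq 1/4$ or $<1/4$: in the large-gradient case the equation is genuinely uniformly elliptic and the classical Harnack applies, while in the small-gradient case one must invoke the Harnack inequality for operators with \emph{shifted} gradient dependence developed in the Appendix (Theorem~\ref{ThmHarIneq} and Remark~\ref{HarnIneqSV}), followed by a radial-barrier argument.

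Second, the limiting operator is not in general the linearization of $F(0,\cdot)$: no differentiability of $F$ in $\mathrm{X}$ is assumed, so the limit obtained by compactness is a fully nonlinear uniformly elliptic operator $\mathrm{F}_\infty$ with constant coefficients, not a linear one. The $C^{1,\alpha}$ boundary regularity you need is then supplied by the Milakis--Silvestre result for such nonlinear $\mathrm{F}_\infty$ (Lemma~\ref{reg_Finfty}).

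Third, Theorem~\ref{Nondeg} is \emph{not} used in the proof of Theorem~\ref{th 5.1. introd.}. The flatness hypothesis $(x_n-\varepsilon_k)^+\le u_k\le(x_n+\varepsilon_k)^+$ already forces $\mathfrak{F}(u_k)\to\{x_n=0\}$ in Hausdorff distance, so the positive phases cannot degenerate. Nondegeneracy enters only in Theorem~\ref{Holder1}, where no flatness is assumed and one needs it (together with Lipschitz regularity) to guarantee that the blow-up sequence converges to a nontrivial global solution to which the Liouville Lemma~\ref{Liouville} applies.
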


Finally, through a blow-up from Theorem \ref{th 5.1. introd.} and the approach used in \cite{Caf87}, we obtain our last main result:

\begin{theorem}[{\bf Lipschitz implies $C^{1,\beta}$}] \label{Holder1}
Let $u$ be a viscosity solution for the free boundary problem \eqref{P 5.1. introduc}. Assume further that $0 \in \mathfrak{F}(u)$, $f \in L^{\infty}(B_1)$ is continuous in $B^{+}_{1}(u)$ and $\mathrm{Q}(0)>0$. If $\mathfrak{F}(u)$ is a Lipschitz graph in a neighborhood of $0$, then $\mathfrak{F}(u)$ is $C^{1,\beta}$ in a (smaller) neighborhood of $0$.
\end{theorem}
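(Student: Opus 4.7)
The plan is to follow Caffarelli's classical blow-up scheme \cite{Caf87}, reducing the Lipschitz-free-boundary hypothesis to the flatness hypothesis, at which point Theorem \ref{th 5.1. introd.} provides $C^{1,\beta}$ regularity. For $r>0$ small, I would introduce the rescalings $u_r(x) := u(rx)/r$, which by Theorems \ref{Lipschitz} and \ref{Nondeg} are uniformly Lipschitz and uniformly non-degenerate. They satisfy
\[
\mathcal{H}(rx,\nabla u_r)\,F_r(x,D^2 u_r) = r\,f(rx) \text{ in } \{u_r>0\},\qquad |\nabla u_r|=\mathrm{Q}(rx)\text{ on }\mathfrak{F}(u_r),
\]
with $F_r(x,M):= r F(rx,r^{-1}M)$ still $(\lambda,\Lambda)$-uniformly elliptic. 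The Lipschitz graph hypothesis on $\mathfrak{F}(u)$ scales to a Lipschitz graph on $\mathfrak{F}(u_r)$ with the same Lipschitz constant.

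By Arzelà--Ascoli, a subsequence $u_{r_k}$ converges locally uniformly to a global function $u_\infty$. Hausdorff convergence of the positive sets (ensured by non-degeneracy) and stability of viscosity solutions, combined with $r f(r\cdot)\to 0$, $\mathrm{Q}(rx)\to \mathrm{Q}(0)$ and the continuity assumption \eqref{Cont-H} on $\mathcal{H}$, show that $u_\infty$ is a global viscosity solution of
\[
\mathcal{H}(0,\nabla u_\infty)\,F_\infty(D^2 u_\infty) = 0 \text{ in } \{u_\infty>0\},\qquad |\nabla u_\infty|=\mathrm{Q}(0)\text{ on }\mathfrak{F}(u_\infty),
\]
for a uniformly elliptic limiting operator $F_\infty$, with $\mathfrak{F}(u_\infty)$ a global Lipschitz graph. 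I would then classify $u_\infty$ as a half-plane solution $u_\infty(x)=\mathrm{Q}(0)\langle x,\nu\rangle^+$: since $\mathcal{H}(0,\xi)\geq L_1|\xi|^p$, on the positivity set the equation reduces to $F_\infty(D^2 u_\infty)=0$ wherever $|\nabla u_\infty|\neq 0$, placing us in the uniformly elliptic framework where Caffarelli's directional-monotonicity argument (applied to the linearization along the cone of monotonicity directions supplied by the Lipschitz graph) collapses the cone to a single direction, and the free boundary condition fixes the slope as $\mathrm{Q}(0)$.

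Once the half-plane classification is obtained, the uniform convergence $u_{r_k}\to \mathrm{Q}(0)\langle \cdot,\nu\rangle^+$ implies that for every $\tilde\varepsilon>0$ there exists $k$ so large that $v:=u_{r_k}/\mathrm{Q}(0)$ is $\tilde\varepsilon$-flat in $B_1$ after a rotation. Choosing $\tilde\varepsilon$ equal to the universal constant of Theorem \ref{th 5.1. introd.} and observing that the data $r_k f(r_k\cdot)$, $[\mathrm{Q}(r_k\cdot)/\mathrm{Q}(0)]_{C^{0,\alpha}}$ and $\|F_{r_k}\|_{C^\omega}$ all become smaller than $\tilde\varepsilon$ for $k$ large, Theorem \ref{th 5.1. introd.} yields $\mathfrak{F}(v)\in C^{1,\beta}(B_{1/2})$. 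Undoing the scaling produces $\mathfrak{F}(u)\in C^{1,\beta}$ in a neighborhood of the origin.

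The main obstacle is the half-plane classification of the blow-up limit. One must first verify that $|\nabla u_\infty|$ stays bounded away from zero in a neighborhood of $\mathfrak{F}(u_\infty)$ inside $\{u_\infty>0\}$: this follows from the linear upper bound of Theorem \ref{Lipschitz} and the linear lower bound of Theorem \ref{Nondeg} passing to the limit, combined with the Lipschitz-graph structure, which together prevent gradient degeneration near the interface. Once the limiting problem is effectively uniformly elliptic near $\mathfrak{F}(u_\infty)$, Caffarelli's boundary-Harnack-based improvement of the monotonicity cone applies essentially verbatim, and this is the crucial step that makes the whole reduction work.
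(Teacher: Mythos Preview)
Your global architecture---rescale, extract a blow-up limit via Theorems \ref{Lipschitz} and \ref{Nondeg}, classify the limit as a half-plane, then invoke Theorem \ref{th 5.1. introd.}---is exactly the paper's. The differences lie in two places.

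\textbf{Classifying the blow-up.} You propose Caffarelli's boundary-Harnack cone-improvement \cite{Caf87}. The paper instead quotes a ready-made Liouville theorem (Lemma \ref{Liouville}, from \cite{DFS15}): any global nonnegative viscosity solution of $\mathrm{F}_\infty(D^2 v)=0$ in $\{v>0\}$, $|\nabla v|=1$ on $\mathfrak{F}(v)$, with $\mathfrak{F}(v)$ a Lipschitz graph, is a half-plane. That lemma is itself proved by iterating the flat-implies-$C^{1,\beta}$ machinery at all scales, not by boundary Harnack. Both routes are legitimate once the limit problem is genuinely uniformly elliptic, but the Liouville citation is a one-line step, whereas your route requires setting up monotonicity of $D_\tau u_\infty$ in the Pucci class $S(\lambda,\Lambda)$ and a boundary Harnack principle for that class in Lipschitz domains---correct, but considerably more work than the paper does.

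\textbf{Removing the degenerate prefactor.} Here there is a genuine gap. You assert that $|\nabla u_\infty|$ is bounded away from zero in a neighborhood of $\mathfrak{F}(u_\infty)$ inside $\{u_\infty>0\}$, and justify this by the linear two-sided bounds from Theorems \ref{Lipschitz} and \ref{Nondeg}. Those bounds control $u_\infty(x)$ in terms of $\dist(x,\mathfrak{F}(u_\infty))$; they do \emph{not} give a pointwise lower bound on $|\nabla u_\infty|$ for a merely Lipschitz function (an $F_\infty$-harmonic function in a Lipschitz domain can certainly have interior critical points). The paper bypasses this entirely: it invokes the Cutting Lemma of Imbert--Silvestre \cite[Lemma 6]{IS12}, which says that a viscosity solution of $\mathcal{H}(x,\nabla u)F(x,D^2u)=0$ with $\mathcal{H}$ satisfying \eqref{1.2} is automatically a viscosity solution of $F(x,D^2u)=0$. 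This is the device that strips the degeneracy and places the blow-up limit directly in the uniformly elliptic framework where Lemma \ref{Liouville} (or, if you prefer, your cone-improvement) applies. You should replace your gradient-lower-bound argument with this lemma.
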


In Theorem \ref{Holder1}, the size of the neighborhood where $\mathfrak{F}(u)$ is $C^{1,\beta}$ depends on the radius $r$ of the ball $B_r$ where $\mathfrak{F}(u)$ is Lipschitz, the Lipschitz norm of $\mathfrak{F}(u)$, $n$ and $\|f\|_{\infty}$. We also emphasize that to obtain the Theorems \ref{th 5.1. introd.} and \ref{Holder1} via the \textit{improvement of flatness property} for the graph of $u$, we will need a version of Hopf type estimate, Harnack inequality, Lipschitz regularity, and Non-degeneracy for $u$. We stress that all of these key tools will be developed either in the first part or the Appendix of our manuscript and they have their own independent mathematical relevance.

\subsection{Major obstacles and strategy for the free boundary regularity}

Let us comment on the main obstacles we came across in order to obtain an \textit{improvement of flatness} property for the graph of a solution of \eqref{P 5.1. introduc} and how to overcome them.

As stated in \cite{DeS11}, the strategy of proving Theorem \ref{th 5.1. introd.} is to obtain an \textbf{improvement of flatness} property for the graph of a solution $u$: if the graph of $u$ oscillates away $\varepsilon$ from a hyperplane in $B_{1}$ then in $B_{\delta_{0}}$ it oscillates $\frac{\delta_{0}\varepsilon}{2}$ away from possibly a different hyperplane. We stress that fundamental tools to achieve this property are a Harnack type inequality and characterizing of limiting solutions; By way of information, the structure of the operator $\mathcal{G}_{p, q}[u] \defeq \mathcal{H}(x,\nabla u)F(x,D^2u)$ requires some non-trivial adaptations.

\begin{enumerate}
\item \textbf{Harnack type inequality}. When we consider the problem \eqref{P 5.1. introduc} for $0 < p \le q < \infty$, the first difficulty we find lies in the following fact: in general, if $\ell$ is an affine function and $u$ is a solution to the problem
\begin{eqnarray}
\label{Degen. prob.}
 \mathcal{H}(x,\nabla u) F(x,D^2 u)  =  f(x) \quad \text{ in } B_{r}(x_{0}), \quad \text{where} \ x_{0} = \frac{e_{n}}{10},
\end{eqnarray}
we can not conclude that $u + \ell$ is a solution to the equation \eqref{Degen. prob.} yet. In contrast, for $p=q = 0$ we know $u + \ell$ is still solution for \eqref{Degen. prob.}. In effect, in \cite{DeS11}, De Silva have used this fact, thereby allowing us to apply the Harnack inequality for $v(x) = u(x) - x_{n}$, which play a crucial role in reaching an \textit{improvement of flatness} for the graph of $u$. We will overcome this difficulty as follows: \\

{\bf Step 1}. We notice that the function $v(x) = u(x) - x_{n}$ is a solution to the problem
\begin{eqnarray}
\label{Degen. with e}
 \mathcal{H} (x, \nabla v + e_n)F(x,D^2 v)  =  f(x) \quad \text{ in } B_{r}(x_{0}),
\end{eqnarray}
Then, we know (see Appendix) that $v$ satisfies the following Harnack Inequality
\begin{eqnarray}
\label{Imbert Harnack 1}
    \sup_{B_{r/2}(x_{0})}v \leq \mathrm{C} \cdot \left\{ \inf_{B_{r/2}} \, v +  (q+1)^{\frac{1}{q+1}} \max\left\{r^{\frac{p+2}{p+1}}, r^{\frac{q+2}{p+1}}\right\} \Pi^{f,\mathfrak{a}}_{p,q}  \right\},
\end{eqnarray}
where $\mathrm{C}(\verb"universal")>0$. \\

{\bf Step 2}. Since we will make use of a blow-up procedure to prove our results (Theorem \eqref{th 5.1. introd.} and Theorem \eqref{Holder1}), we may assume WLOG that $\Vert f \Vert_{\infty} $ is small. Hence, we can consider the scaled function $v_{r}(x) = \frac{v(rx + x_{0})}{r}$ and apply \eqref{Imbert Harnack 1} to get
\begin{eqnarray}
\label{Scaling Imbert Harnack}
   \sup_{B_{r/2}(x_{0})}v \leq C\left\lbrace \inf_{B_{r/2}(x_{0})} v  +  \max\left\{r^{\frac{p+2}{p+1}}, r^{\frac{q+2}{p+1}}\right\}  \right\rbrace,
\end{eqnarray}
for a constant $C=C(n,p,q, \mathfrak{a}, \lambda,\Lambda)>0$.

{\bf Step 3}. Notice that the Harnack Inequality \eqref{Scaling Imbert Harnack} is slightly different from the one addressed in \cite{DeS11}. In effect, for $\varepsilon \in (0, 1)$, De Silva have used the inequality
\begin{eqnarray}
\label{DeSilva Harnack}
   \sup_{B_{r/2}(x_{0})}v \leq C\left\lbrace \inf_{B_{r/2}(x_{0})} v  + \Vert f \Vert_{L^\infty(B_{r/2}(x_{0}))}  \right\rbrace
\end{eqnarray}
to show that if $\Vert f \Vert_{\infty}$ satisfies the \textit{smallness} assumption $\Vert f \Vert_{L^\infty(B_{r/2}(x_{0}))} \leq \varepsilon^{2}$, then we are able to build radial barriers $w_{r, x_{0}}$ and apply comparison techniques to achieve an appropriate Harnack type inequality to establish the desired \textit{improvement flatness} (see \cite[Theorem 3.1 and Lemma 3.3]{DeS11} for more details). A careful analysis of the behavior of $v = u - x_{n}$ (or $v = x_{n} - u$) in a ball $B_{r_{1}}(x_{0})$ with
$$\vert \nabla u \vert < \frac{1}{2} \quad \text{in} \ B_{r_{1}}(x_{0}), $$
and $r_{1} = r_{1}(\mu) >0$, reveals that if we consider radial barriers $w_{r, x_{0} + r_{2}e_{n}}$ the condition $\Vert f \Vert_{\infty} \leq \varepsilon^{2}$ used in \eqref{DeSilva Harnack} can be replaced by an adequate \textit{smallness} condition of the \textit{radius} $r =r(r_{2})$ in \eqref{Scaling Imbert Harnack} to obtain a Harnack type inequality, where $r_{2} = r_{2}(r_{1})$.
\end{enumerate}

\subsection{Some extensions and further comments}

In conclusion, we stress that our approach is particularly refined and quite far-reaching in order to be employed in other classes of operators. As a matter of fact, we can also extended our results for nonlinear elliptic equations with non-homogeneous term as follows:

\begin{enumerate}

\item {\bf Multi degenerate operators in non-divergence form.}

We stress that an extension of our results also holds to general multi-degenerate fully nonlinear models given by
$$
\mathcal{G}(x, D u, D^2 u) \defeq \left(|Du|^p + \sum_{i=1}^{N} \mathfrak{a}_i(x)|Du|^{q_i}\right)F(x, D^2 u) \quad (\text{with (A0)-(A2) in force}),
$$
where $0\le\mathfrak{a}_i \in C^0(\Omega)$, $i \in \{1, \cdots, N\}$, and $0<p\leq q_1\leq \cdots\leq q_N< \infty$, which are a natural non-variational counterpart of certain multi-phase variational problems treated in \cite{DeFO19} (see, \cite[pag. 8]{daSR20} for related discussion).

  \item {\bf Doubly degenerate $(p, q)-$Laplacian in non-divergence form.}

We would also like to highlight that other interesting class of degenerate operators where our results work out is the double degenerate $p-$Laplacian type operators, in non-divergence form (cf. \cite{APR17} and \cite[Section 5.1]{daSR20} for related regularity aspects), for $2<p_0\leq q_0< \infty$ and $1<p< \infty$:
     $$
         \mathcal{G}_{p_0, q_0}(x, \xi, X) = \mathcal{H}_{p_0, q_0}(x, \xi)F_p(\xi, X)
     $$
where
   $$
     \mathcal{H}_{p_0, q_0}(x, \xi)\defeq |\xi|^{p_0-2}+ \mathfrak{a}(x)|\xi|^{q_0-2} \quad (\text{with \eqref{1.3} in force})
   $$
and
$$
     F_p(\xi, X) \defeq  \tr\left[\left(\textrm{Id}_n+(p-2)\frac{\xi\otimes \xi}{|\xi|^2}\right)X\right]
$$
is the well-known Normalized $p-$Laplacian operator. Notice that $F_p$ satisfies assumption (A0)-(A2) with
$$
   \lambda_p = \min\{p-1, 1\} \quad \mbox{and}\ \quad \Lambda_p = \max\{p-1, 1\} \quad \text{and} \quad \omega \equiv 0.
$$

\item {\bf Fully nonlinear models with non-standard growth.}

 We would like to stress the class of variable-exponent, degenerate elliptic equations in non-divergence form, which is, in some extent, the non-variational counterpart of certain non-homogeneous functionals satisfying nonstandard growth conditions (see \cite{BPRT20} for an enlightening essay). Particularly, such models encompass problems ruled by the $p(x)-$Laplacian operator (cf. \cite{LW17}, \cite{LW19} and \cite{LW21}).

An archetypical example we have in mind concerns models of the form
$$
\mathcal{G}_{p(x), q(x)}(x, \xi, X) \defeq \left(|\xi|^{p(x)}+\mathfrak{a}(x)|\xi|^{q(x)}\right)F(x, \mathrm{X}) \quad (\text{with (A0)-(A2) and \eqref{1.3} in force}),
$$
for rather general exponents $p, q \in C^0(\Omega;(0, \infty)$ (see \cite{BPRT20} for details).
\end{enumerate}

The rest of the paper is organized as follows. In Section \ref{section2} we define the notion of viscosity solution to the free boundary problem \eqref{P 5.1. introduc} and gather a few tools that we shall use in the proofs of Theorems \ref{Lipschitz} and Theorem \ref{Nondeg}, which are the contents of Section \ref{Lipandnondeg}. In Section \ref{harnack} we present the proof of Harnack type inequality which in turn is used in Section \ref{improvement} to prove the improvement of flatness result. In Section \ref{reg_fron_livre} we establish the regularity of the free boundary $\mathfrak{F}(u)$, i.e. Theorems \ref{th 5.1. introd.} and \ref{Holder1}. Finally, in the Appendix we preset a discussion of the Harnack inequality, which were of use throughout the paper.

\section{Preliminaries and some auxiliary results} \label{section2}

We start by giving the definition of viscosity solution for problems of the form \eqref{P 5.1. introduc}. First, recall that given two continuous functions $u$ and $\phi$ defined in an open set $\Omega$ and a point $x_0 \in \Omega$, we say that $\phi$ touches $u$ by below (resp. above) at $x_0$ whenever
$
    u(x_0)=\phi(x_0)
$
$$
    u(x) \ge \phi(x) \,\, (\textrm{resp.} \, \, u(x) \le \phi(x)) \,\,\,\textrm{in a neighborhood} \,\,\mathcal{O} \,\, \textrm{of} \,\, x_0.
$$
If this inequality is strict in $\mathcal{O} \setminus \{x_0\}$, we say that $\phi$ touches $u$ strictly by below (resp. above).

\begin{definition}\label{d 5.3}
Let $u \in C(\Omega)$ nonnegative. We say that $u$ is a viscosity supersolution (resp.subsolution) to
\begin{eqnarray*}
	\left \{
		\begin{array}{rclcl}
			\mathcal{H}(x,\nabla u)F(x,D^2u) & = & f(x) & \text{ in } & \Omega_{+}\left( u \right),\\
			|\nabla u| & = &\mathrm{Q}(x) & \text{ on } & \mathfrak{F}(u).
		\end{array}
	\right.
\end{eqnarray*}
 if and only if the following conditions are satisfied:
\begin{enumerate}
\item[(F1)] If $\phi \in C^{2}(\Omega^{+}(u))$ touches $u$ by below (resp. above) at $x_0 \in \Omega^{+}(u)$ then
$$
   \mathcal{H}(x_0,\nabla \phi(x_0))F(x_0,D^2\phi(x_0)) \le f(x_0) \qquad \left(\textrm{resp.} \,\, \mathcal{H}(x_0,\nabla \phi(x_0))F(x_0,D^2\phi(x_0) \ge f(x_0)\right).
$$
\item[(F2)] If $\phi \in C^2(\Omega)$ and $\phi$ touches $u$ below (resp. above) at $x_0 \in \mathfrak{F}(u)$ and $|\nabla \phi|(x_0) \not= 0$ then
$$
    |\nabla \phi|(x_0) \le Q(x_0) \qquad \left( \textrm{resp.} \,\,\, |\nabla \phi|(x_0) \ge Q(x_0)\right).
$$
\end{enumerate}

We say that $u$ is a viscosity solution if it is a viscosity supersolution and a viscosity subsolution.
\end{definition}

We will further need the notion of comparison subsolution/supersolution:
\begin{definition}\label{d 5.3e}
We say $u \in C(\Omega)$ is a strict comparison subsolution (resp. supersolution) to
\begin{eqnarray*}
	\left \{
		\begin{array}{rclcl}
			\mathcal{H}(x,\nabla u)F(x,D^2u) & = & f(x) & \text{ in } & \Omega_{+}\left( u \right),\\
			|\nabla u| & = &\mathrm{Q}(x) & \text{ on } & \mathfrak{F}(u).
		\end{array}
	\right.
\end{eqnarray*}
if only if $u \in C^{2}(\overline{\Omega^{+}(u)})$ and the following conditions are satisfied:
\end{definition}
 \begin{enumerate}
\item[(G1)] $\mathcal{H}(x,\nabla u)F(x,D^2u) > f(x) \quad \text{in} \quad \Omega^{+}(u)\qquad \left(\textrm{resp.} \quad \mathcal{H}(x,\nabla u)F(x,D^2u)  < f(x) \right)$;
\item[(G2)] If $x_0 \in \mathfrak{F}(u)$, then
$$
    |\nabla u|(x_0) > \mathrm{Q}(x_0) \qquad \left(\textrm{resp.} \,\,\, 0 < |\nabla u|(x_0) < \mathrm{Q}(x_0)\right).
$$
\end{enumerate}

In the sequel, let us remember the following notion of convergence of sets:
\begin{definition}\label{Def-Hausdorff-Dist}
A sequence of sets $\{\mathfrak{A}_k\}$ is said to converge (locally) to a set $\mathfrak{A}$ in the Hausdorff distance if, given a compact set $\mathrm{K}$ and a $\delta >0$, there exist a $k=k(\delta, \mathrm{K}) \in \mathbb{N}$, the following inclusions hold:
$$
    \mathrm{K} \cap \mathfrak{A}_k \subset \mathcal{N}_{\delta}(\mathfrak{A}) \cap \mathrm{K} \quad \textrm{and} \quad \mathrm{K} \cap \mathfrak{A} \subset \mathcal{N}_{\delta}(\mathfrak{A}_k) \cap \mathrm{K},
$$
where $\mathcal{N}_{\delta}(\mathrm{E}) \defeq  \{x \in \mathbb{R}^n : \dist(x, \mathrm{E}) < \delta\}$.
\end{definition}

Next Lemma provides a crucial comparison device for solutions to FBP \eqref{P 5.1. introduc}.

\begin{lemma}\label{l 5.1} Let $u,v$ be respectively a solution and a strict subsolution to \eqref{P 5.1. introduc} in $\Omega$. If $u \ge v^{+}$ in $\Omega$ then $u > v^{+}$ in $\Omega^{+}(v) \cup \mathfrak{F}(v)$.
 \end{lemma}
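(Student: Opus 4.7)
I would prove Lemma \ref{l 5.1} by contradiction, exploiting the strictness in (G1) and (G2) against the viscosity conditions (F1) and (F2) for the supersolution $u$. Suppose to the contrary that there exists $x_0 \in \Omega^+(v) \cup \mathfrak{F}(v)$ with $u(x_0) = v^+(x_0)$ (equality, since $u \ge v^+$ on $\Omega$). I split the argument according to whether $x_0$ lies in the positivity set or on the free boundary of $v$.

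\textbf{Case 1 (interior contact).} If $x_0 \in \Omega^+(v)$, continuity of $v$ gives $v > 0$ in a neighborhood, so $v = v^+$ there and $v$ touches $u$ from below at $x_0$. Since $v \in C^{2}(\overline{\Omega^+(v)})$, it is admissible as a test function, and (F1) applied to $u$ at $x_0$ yields
$$
\mathcal{H}(x_0,\nabla v(x_0))\,F(x_0,D^2 v(x_0)) \le f(x_0),
$$
directly contradicting the strict inequality (G1) for $v$.

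\textbf{Case 2 (free-boundary contact).} If $x_0 \in \mathfrak{F}(v)$, then $v(x_0) = 0 = u(x_0)$. I first note that $x_0 \in \mathfrak{F}(u)$: every neighborhood of $x_0$ meets $\Omega^+(v)$ (where $u \ge v > 0$), so $x_0 \in \overline{\Omega^+(u)}$, while $u(x_0)=0$ rules out $x_0 \in \Omega^+(u)$. By (G2) we have $|\nabla v(x_0)| > \mathrm{Q}(x_0) \ge 0$, so $\nabla v(x_0) \ne 0$, and the implicit function theorem gives that $\mathfrak{F}(v)$ is locally a $C^{2}$ graph near $x_0$, with a $C^{2}$ extension $\tilde v$ of $v|_{\overline{\Omega^+(v)}}$ to a full neighborhood $\mathcal{U}$ of $x_0$ satisfying $\tilde v < 0$ on $\mathcal{U}\setminus\overline{\Omega^+(v)}$. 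To see that $\tilde v$ touches $u$ from below at $x_0$, I check: on $\overline{\Omega^+(v)}\cap\mathcal{U}$, $u\ge v=\tilde v$; on $\mathcal{U}\setminus\overline{\Omega^+(v)}$, $u \ge 0 > \tilde v$. Since $|\nabla\tilde v(x_0)|=|\nabla v(x_0)|\ne 0$, condition (F2) for $u$ gives
$$
|\nabla \tilde v(x_0)| \le \mathrm{Q}(x_0),
$$
again contradicting (G2) for $v$.

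\textbf{Where the difficulty lies.} The only genuinely delicate point is Case 2: producing a bona fide $C^{2}$ test function for $u$ at the free boundary contact. The nontangency $\nabla v(x_0)\ne 0$ (which comes for free from the strict condition (G2)) is what allows the implicit function theorem to give a smooth, two-sided extension $\tilde v$, and the continuity/nonnegativity of $u$ combined with the negativity of $\tilde v$ on the far side of $\mathfrak{F}(v)$ is what lets the test function comparison $u\ge \tilde v$ extend across the free boundary. Once this extension argument is in place, both cases reduce to a one-line clash between a strict and a non-strict inequality, so no fine estimates on $F$ or $\mathcal{H}$ are needed.
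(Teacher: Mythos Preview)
Your proof is correct. The paper itself states Lemma~\ref{l 5.1} without proof, treating it as a standard comparison device (implicitly deferring to De~Silva~\cite{DeS11}, where the analogous lemma is proved in exactly the way you describe). Your two-case argument---using (G1) against (F1) at an interior contact point and (G2) against (F2) at a free-boundary contact point, after extending $v$ smoothly across $\mathfrak{F}(v)$ via the nonvanishing gradient---is the expected route, and your identification of the only delicate step (producing a genuine $C^2$ test function at the free boundary) is accurate.
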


As stayed in \cite{DeS11} other crucial piece of information in proving Theorem \ref{th 5.1. introd.} is the regularity of solutions to the homogeneous problem with Neumann boundary condition:
\begin{equation}\label{const}
\left\{
\begin{array}{rclcl}
 \mathrm{F}(D^2 u_{\infty}) & = & 0 & \mbox{in}  & B^{+}_{\rho_0}\\
\frac{\partial u_{\infty}}{\partial x_n} & = & g(x)  & \mbox{on} & \Upsilon_{\rho_0},
\end{array}
\right.
\end{equation}
which arises from a specific blowing-up procedure, where we denote by
$$
	B^{+}_{\rho_0}  \defeq  \left\{(x, x_n) \in \mathbb{R}^{n-1}\times \mathbb{R} : |x| <\rho_0 \,\,\,\text{and}\,\,\, x_n >0\right\}
$$
and
$$
\Upsilon_{\rho_0}  \defeq  \left\{(x^{\prime}, x_n) \in \mathbb{R}^{n-1}\times \mathbb{R} : |x^{\prime}| <\rho_0 \,\,\,\text{and} \,\,\,x_n=0\right\}.
$$

Let us present, in the sequel, the notion of viscosity solution employed in \eqref{const}.

\begin{definition}\label{]defVisc}
We say that $u_{\infty} \in C^0(B_{\rho} \cap \{x_n \ge 0\})$ is a viscosity solution to \eqref{const} if given $\mathrm{P}$ a quadratic polynomial touching $u_{\infty}$ by below (resp. by above) at $x_0 \in B_{\rho} \cap \{x_n \ge 0\}$, then
\begin{enumerate}
\item[(i)] If $ x_0 \in B^{+}_{\rho} $ then $\mathrm{F}(D^2 \mathrm{P}(x_0)) \le 0$ \,\,\,(resp. $\mathrm{F}(D^2 \mathrm{P}(x_0)) \ge 0$);
\item[(ii)] If $ x_0 \in \Upsilon_{\rho}$ then $\frac{\partial \mathrm{P}(x_0)}{\partial x_n} \le g(x_0)$ \,\,\,(resp. $\frac{\partial \mathrm{P}(x_0)}{\partial x_n} \ge g(x_0)$)
\end{enumerate}
\end{definition}

It is worth to highlight that we may choose, in the above definition, polynomials $\mathrm{P}$ touching $u_{\infty}$ strictly by below/above. Furthermore, it suffices to check sentence (ii) does hold for polynomials $\tilde{\mathrm{P}}$ such that $\mathrm{F}( D^2 \tilde{\mathrm{P}}) >0$ (see \cite{DeS11} for such details).

The following regularity estimate for solutions of \eqref{const} will play a key role in the improvement of flatness process. It holds as a consequence of the boundary regularity results addressed by Milakis-Silvestre in \cite[Theorem 6.1]{MS06}.
\begin{lemma}\label{reg_Finfty}
Let $u$ be a viscosity solution to
$$
\left\{
\begin{array}{rclcl}
F(D^2 u) &=& 0 & \mbox{in} &  B^{+}_{\frac{1}{2}} \\
\frac{\partial u}{\partial \nu}&=& g(x) & \mbox{on} & \Upsilon_{\frac{1}{2}}
\end{array}
\right.
$$
with $\|u\|_{L^{\infty}\left(B^{+}_{\frac{1}{2}}\right)} \le 1$. Then, there exist universal constants $\alpha\in(0,1)$ and $\mathrm{C}_0 >0$ such that
$$
  \displaystyle  \sup_{B^{+}_{\rho}} \frac{|u(x)-u(0) - \nabla u(0) \cdot x|}{ \rho^{1+\alpha}} \le \mathrm{C}_0 \quad \text{for}\quad  \rho \in \left(0, \frac{1}{2}\right).
$$
\end{lemma}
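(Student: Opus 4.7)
The plan is to apply the boundary $C^{1,\alpha}$ estimates of Milakis--Silvestre [MS06, Theorem 6.1] directly, and then merely reinterpret their conclusion in the pointwise form stated here. My approach has three stages.

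First, I would observe that the setting matches the hypotheses of the Milakis--Silvestre framework. Indeed, $F$ is uniformly elliptic (and may be assumed normalized with $F(0)=0$), and by Definition \ref{]defVisc} the Neumann condition is understood in the viscosity sense against polynomials $\tilde P$ with $F(D^2\tilde P)>0$, which is exactly the formulation used in [MS06]. A standard reduction subtracts the affine function $\ell(x)=g(0)x_n$ from $u$: the new function $\tilde u=u-\ell$ still solves $F(D^2\tilde u)=0$ in $B_{1/2}^+$ (since $D^2\ell=0$), but with homogeneous Neumann data $\partial_\nu\tilde u=0$ on $\Upsilon_{1/2}$ up to the oscillation of $g$ around $g(0)$, which by assumption is controlled in a suitable H\"older norm. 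Quantitative control of $\tilde u$ in $L^\infty$ follows from $\|u\|_{L^\infty(B_{1/2}^+)}\le 1$.

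Second, I would invoke [MS06, Theorem 6.1], which provides universal constants $\alpha\in(0,1)$ and $C>0$ such that $\tilde u\in C^{1,\alpha}(\overline{B_{1/4}^+})$ with
$$
\|\tilde u\|_{C^{1,\alpha}(\overline{B_{1/4}^+})}\le C\bigl(\|\tilde u\|_{L^\infty(B_{1/2}^+)}+\|g-g(0)\|_{C^{\alpha}(\Upsilon_{1/2})}\bigr).
$$
In particular, at the boundary point $0\in\Upsilon_{1/2}$, the linear part $L(x):=\tilde u(0)+\nabla\tilde u(0)\cdot x$ approximates $\tilde u$ quadratically up to order $1+\alpha$:
$$
\sup_{B^+_\rho}\bigl|\tilde u(x)-L(x)\bigr|\le C_0\,\rho^{1+\alpha},\qquad \rho\in(0,1/2).
$$
Adding $\ell(x)$ back and absorbing it into the linear part, we recover exactly the claim of Lemma \ref{reg_Finfty} with the same exponent $\alpha$ and a universal constant $C_0$.

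Third, the main (minor) obstacle is the verification that the notion of viscosity solution in Definition \ref{]defVisc} is compatible with the one in [MS06]. The restriction in Definition \ref{]defVisc} that the boundary condition is tested only against polynomials $\tilde P$ with $F(D^2\tilde P)>0$ is precisely the ``relaxed'' viscosity formulation used by Milakis--Silvestre, and is equivalent to the standard one under uniform ellipticity. I expect the whole argument to be essentially a direct citation, so no substantial new estimate is needed beyond reading off the pointwise expansion at $0$ from the $C^{1,\alpha}$ norm.
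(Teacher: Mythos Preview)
Your proposal is correct and follows essentially the same route as the paper: the paper does not give an independent proof of this lemma but states that it ``holds as a consequence of the boundary regularity results addressed by Milakis--Silvestre in \cite[Theorem 6.1]{MS06}.'' Your write-up simply unpacks this citation with the standard affine reduction and the compatibility check for the viscosity formulation, which is precisely what a direct appeal to \cite{MS06} entails.
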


In this last part we will collect some fundamental auxiliary results for our purposes. The first one is a well known result so called ABP estimate. We recommend the reader to refer \cite[Theorem 8.6]{daSJR21} for an exact proof.

\begin{theorem}[{\bf Alexandroff-Bakelman-Pucci estimate}]\label{ABPthm}
  Assume that assumptions (A0)-(A2) there hold. Then, there exists $C = C(n, \lambda, p, q, \diam(\Omega))>0$ such that for any $u \in C^0(\overline{\Omega})$ viscosity solution
   $$
		\mathcal{H}(x, \nabla u) F(x, D^2 u) = f(x) \quad  \text{in} \quad  \Omega
   $$
    satisfies
$$
\displaystyle \|u\|_{L^{\infty}(\Omega)} \leq \|u\|_{L^{\infty}(\partial \Omega)} +C\cdot\diam(\Omega)\max\left\{\left\|\frac{f}{1+\mathfrak{a}}\right\|^{\frac{1}{p+1}}_{L^n(\Omega)}, \left\|\frac{f}{1+\mathfrak{a}}\right\|^{\frac{1}{q+1}}_{L^n(\Omega)}\right\}
$$
\end{theorem}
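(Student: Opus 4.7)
The approach I would take is the classical Alexandroff--Bakelman--Pucci (ABP) scheme via the concave envelope, adapted to the doubly degenerate structure. WLOG we may assume, after subtracting a constant, that $u \leq 0$ on $\partial\Omega$ and that $M := \sup_{\Omega} u > 0$ (else the estimate is trivial). Let $d = \diam(\Omega)$, extend $u^{+}$ by zero to a ball $B_{2d}$ containing $\Omega$, and let $\Gamma$ denote its concave envelope in $B_{2d}$. Classical facts give that the contact set $\mathcal{C} := \{u^{+} = \Gamma\}$ is contained in $\{u > 0\} \subset \Omega$; that $u$ is punctually $C^{1,1}$ from above along $\mathcal{C}$, with $|\nabla u| = |\nabla\Gamma| \leq M/d$ and $D^{2}u \leq D^{2}\Gamma \leq 0$ (in the viscosity sense); and that the supergradient map $x \mapsto \nabla\Gamma(x)$ covers $B_{M/d}$ when restricted to $\mathcal{C}$, which yields the weighted area formula
$$
\int_{B_{M/d}} h(p)\,dp \;\leq\; \int_{\mathcal{C}} h\bigl(\nabla\Gamma(x)\bigr)\,\det\bigl(-D^{2}\Gamma(x)\bigr)\,dx
$$
for every nonnegative Borel function $h$.

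On $\mathcal{C}$ the viscosity equation reads $F(x,D^{2}u) = f(x)/\mathcal{H}(x,\nabla u)$. Using (A1) together with $D^{2}u \leq 0$ one has $\lambda\,\mathrm{tr}(-D^{2}u) \leq |F(x,D^{2}u)|$, and the AM--GM inequality then produces the pointwise Pucci-type bound
$$
\det(-D^{2}\Gamma) \;\leq\; \det(-D^{2}u) \;\leq\; \frac{C(n,\lambda)\,|f(x)|^{n}}{\mathcal{H}(x,\nabla u)^{n}} \qquad \text{on } \mathcal{C}.
$$
The crucial structural inequality that unlocks the double degeneracy is the elementary observation
$$
\mathcal{H}(x,\xi) \;\geq\; L_{1}\bigl(|\xi|^{p}+\mathfrak{a}(x)|\xi|^{q}\bigr) \;\geq\; L_{1}\bigl(1+\mathfrak{a}(x)\bigr)\min\bigl(|\xi|^{p},|\xi|^{q}\bigr),
$$
which is verified separately for $|\xi|\leq 1$ (where $|\xi|^{p}\geq|\xi|^{q}$) and $|\xi|\geq 1$ (where $|\xi|^{p}\leq|\xi|^{q}$). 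Testing the area formula with the distinguished choice $h(p) := \min\{|p|^{np},|p|^{nq}\}$ and combining with the above Pucci bound, the factors $\min(|\nabla u|^{np},|\nabla u|^{nq})$ in the numerator cancel against the corresponding minimum in $\mathcal{H}^{n}$, leaving
$$
\int_{B_{M/d}} \min\{|p|^{np},|p|^{nq}\}\,dp \;\leq\; \frac{C}{L_{1}^{n}}\int_{\Omega} \frac{|f(x)|^{n}}{(1+\mathfrak{a}(x))^{n}}\,dx.
$$

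The left-hand side is an explicit radial integral: if $M/d \leq 1$ it equals a constant times $(M/d)^{n(q+1)}$, while if $M/d \geq 1$ it is bounded below by a constant times $(M/d)^{n(p+1)}$. In either regime one obtains
$$
\Bigl(\tfrac{M}{d}\Bigr)^{p+1} \wedge \Bigl(\tfrac{M}{d}\Bigr)^{q+1} \;\leq\; C\,\Bigl\|\tfrac{f}{1+\mathfrak{a}}\Bigr\|_{L^{n}(\Omega)},
$$
which, after solving for $M$ in each case, gives exactly the announced bound with $\max\{\,\|\tfrac{f}{1+\mathfrak{a}}\|^{1/(p+1)}_{L^{n}},\,\|\tfrac{f}{1+\mathfrak{a}}\|^{1/(q+1)}_{L^{n}}\,\}$. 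The main subtlety, and the step I expect to be the most delicate, is precisely the choice of the test weight $h$ in the area formula: both powers $np$ and $nq$ must appear simultaneously in order to ``cancel'' the inconvenient gradient-dependent factor $1/\mathcal{H}(x,\nabla u)^{n}$ and produce the clean $(1+\mathfrak{a})^{-n}$ factor, and it is this minimum-type choice that naturally generates the two different scaling exponents in the final maximum.
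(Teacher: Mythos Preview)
The paper does not actually prove this theorem; it states the result and refers the reader to \cite[Theorem 8.6]{daSJR21} for a proof. So there is no in-paper argument to compare against.

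Your proposal is the natural ABP-via-concave-envelope scheme, and the key idea---choosing the weight $h(p)=\min\{|p|^{np},|p|^{nq}\}$ in the area formula so that it cancels the gradient factor coming from $\mathcal{H}(x,\nabla u)^{-n}\le L_1^{-n}(1+\mathfrak{a}(x))^{-n}\min\{|\nabla u|^{np},|\nabla u|^{nq}\}^{-1}$---is correct and is exactly how the doubly degenerate structure is handled in this family of results. The dichotomy $M/d\le 1$ versus $M/d\ge 1$ that produces the two exponents $\tfrac{1}{q+1}$ and $\tfrac{1}{p+1}$ is the right way to close. One small point worth tightening when you write it out in full: at contact points where the touching paraboloid has zero gradient you cannot divide by $\mathcal{H}$, but since $h(0)=0$ those points contribute nothing to the right-hand side of the area inequality, so the argument goes through. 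Otherwise the sketch is sound.
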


We close this section with the following $\mathcal{C}^{1,\beta}_{loc}$ regularity result for doubly degenerate fully non-linear elliptic problems.

\begin{theorem}[{\bf Gradient estimates at interior points}]\label{main1} Assume that assumptions (A0)-(A2), \eqref{1.2} and \eqref{1.3} there hold. Let $u$ be a bounded viscosity solution to
$$
 \mathcal{H}(x, \nabla u) F(x, D^2 u) = f(x) \quad \text{in} \quad \Omega
$$
 with  $f \in L^\infty(\Omega)$. Then, $u$ is $C^{1, \beta}$, at interior points, for $\beta \in (0,  \alpha_{\mathrm{Hom}}) \cap \left(0, \frac{1}{p+1}\right]$. More precisely, for any point $x_0 \in \Omega^{\prime}\Subset \Omega$ there holds
$$
       \displaystyle [u]_{C^{1, \beta}(B_r(x_0))}\leq C\cdot\left(\|u\|_{L^{\infty}(\Omega)} +1+ \|f\|_{L^{\infty}(\Omega)}^{\frac{1}{p+1}}\right),
$$
for $0<r < \frac{1}{2}$ where $C>0$ is a universal constant\footnote{A constant is said to be universal if it depends only on dimension, degeneracy and ellipticity constants, $\alpha_{\text{Hom}}$, $\beta$, $L_1, L_2$ and $\|F\|_{C^{\omega}(\Omega)}$.}.
\end{theorem}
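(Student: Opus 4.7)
My plan is to employ the classical approximation--iteration scheme of Caffarelli, suitably adapted to the doubly degenerate structure: produce, at any interior $x_0\in\Omega'\Subset\Omega$, a sequence of affine approximations at geometric scales whose geometric convergence encodes $C^{1,\beta}$ regularity. First, I would normalize so that $\|u\|_{L^\infty(B_1)}\le 1$ and $\|f\|_{L^\infty(B_1)}\le\varepsilon_0$, with the $\omega$-moduli of $F$ and $\mathfrak{a}$ below a universal threshold $\varepsilon_0>0$ (to be fixed along the argument), using the intrinsic scaling of the equation $\mathcal{H}F=f$ together with Theorem \ref{ABPthm} to control the $L^\infty$ norm. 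Next I would construct inductively affine functions $\ell_k(x)=a_k+b_k\cdot(x-x_0)$ and choose a universal $\rho\in(0,1/2)$ so that
\[
\sup_{B_{\rho^k}(x_0)}|u-\ell_k|\le \rho^{k(1+\beta)},\qquad |a_{k+1}-a_k|+\rho^k|b_{k+1}-b_k|\le C\rho^{k(1+\beta)}.
\]
The telescoping sums $b_k\to \nabla u(x_0)$ and $a_k\to u(x_0)$ then deliver the pointwise bound $[u]_{C^{1,\beta}(B_r(x_0))}\le C\cdot(\|u\|_\infty+1+\|f\|_\infty^{1/(p+1)})$ after undoing the normalization.

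For the inductive step I would rescale
\[
v_k(y)\defeq\frac{u(x_0+\rho^k y)-\ell_k(x_0+\rho^k y)}{\rho^{k(1+\beta)}},\qquad y\in B_1,
\]
so that $\|v_k\|_{L^\infty(B_1)}\le 1$ and $v_k$ solves, in the viscosity sense, a rescaled equation of the same type in which the coefficient oscillation of $F$, the modulus $\omega_{\mathfrak{a}}$ and the normalized source all carry small factors of $\rho^k$. The improvement is then governed by a dichotomy on $|b_k|$: when $|b_k|\gtrsim \rho^{k\beta}$, assumption \eqref{1.2} pinches $\mathcal{H}(\cdot, b_k+\rho^{k\beta}\nabla v_k)$ between positive universal constants, the equation for $v_k$ is genuinely uniformly elliptic, and Caffarelli's classical $C^{1,\alpha_{\mathrm{Hom}}}$ estimate for the frozen operator $F(x_0,\cdot)$ yields the new affine approximation; when $|b_k|\ll \rho^{k\beta}$, an approximation lemma should produce some $h$ with $F(x_0,D^2 h)=0$ and $\|v_k-h\|_{L^\infty(B_{1/2})}\le \delta$, and the optimal affine fit to $h$ (available since $h\in C^{1,\alpha_{\mathrm{Hom}}}$ with universal bound) provides $\ell_{k+1}$. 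The choice $\beta\in(0,\alpha_{\mathrm{Hom}})\cap(0,1/(p+1)]$ ensures that the geometric factor $\rho^{1+\beta}$ absorbs the approximation error at each step; the cap $\beta\le 1/(p+1)$ is forced by the scaling identity $|\nabla u|^p|D^2 u|\sim r^{(p+1)\beta-1}$ at a critical point, which is already sharp for the single degeneracy $|\nabla u|^p F(D^2 u)=f$.

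The main obstacle I anticipate is the approximation lemma in the small--slope regime, which I would prove by compactness and contradiction. A hypothetical failing sequence $(v_j,f_j,F_j,\mathcal{H}_j)$ with $\|f_j\|_\infty,\Theta_{F_j},\omega_{\mathfrak{a}_j}\to 0$ staying uniformly $\delta_0$-away from every limiting solution admits, via interior H\"{o}lder estimates descending from Theorem \ref{ABPthm} and Krylov--Safonov arguments (applicable once \eqref{1.2} is used to pin $\mathcal{H}$ between positive constants on bounded gradient sets), a uniform subsequential limit $v_\infty$. The delicate step is passing the degenerate equation to the limit: when a smooth test function $P$ touches $v_\infty$ at $y_0$ with $\nabla P(y_0)\ne 0$, assumption \eqref{1.2} gives $\mathcal{H}_j(y_0,\nabla P(y_0))\ge L_1|\nabla P(y_0)|^p>0$, so the viscosity inequality survives division by $\mathcal{H}_j$ and the uniform convergence of $F_j$ yields $F(x_0,D^2 P(y_0))\le 0$; the vanishing--gradient case is handled by the standard perturbation test with $P+\eta\, x_n$ and $\eta\to 0$. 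Crucially, the strict inequality $q>p$ forces $\mathfrak{a}_j|\xi|^q$ to be dominated asymptotically by $|\xi|^p$ on bounded slopes, so the limit equation is exactly $F(x_0,D^2 v_\infty)=0$, contradicting $\|v_\infty - h\|_{L^\infty}\ge \delta_0$ for every admissible $h$ and closing the argument. Iterating the dichotomy and summing the geometric tails then produces the claimed estimate.
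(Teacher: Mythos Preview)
The paper does not actually prove Theorem~\ref{main1}; it is stated as a preliminary tool and the proof is delegated to \cite[Theorem 1.1]{daSR20} and \cite{DeF20}. Your outline is precisely the Caffarelli-type approximation--iteration scheme carried out in those references, so at the level of strategy you are aligned with the intended proof.

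Two points in your sketch deserve correction. First, your justification of compactness (``Krylov--Safonov arguments, applicable once \eqref{1.2} is used to pin $\mathcal{H}$ between positive constants on bounded gradient sets'') cannot work as written: on bounded gradient sets $\mathcal{H}(x,\xi)$ is bounded \emph{above}, but it vanishes at $\xi=0$, so there is no positive lower bound and you cannot simply divide through and invoke the uniformly elliptic theory. The correct compactness input is the Harnack inequality for the shifted equation $\mathcal{H}(x,\nabla v+\xi)F(x,D^2 v)=f$ uniform in bounded $\xi$ (Theorem~\ref{ThmHarIneq} and Remark~\ref{HarnIneqSV} in the Appendix, based on \cite{IS16}), which yields local H\"{o}lder estimates independent of $b_k$. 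Second, and for the same reason, your ``large slope'' branch of the dichotomy is not clean: $|b_k|\gtrsim\rho^{k\beta}$ does not prevent $b_k+\rho^{k\beta}\nabla\phi$ from vanishing at a test point, so you cannot claim the rescaled equation is genuinely uniformly elliptic. In \cite{daSR20} and \cite{DeF20} this is handled by proving a single approximation lemma valid for all bounded $b_k$ (via the uniform-in-shift compactness just mentioned), rather than splitting into cases. Once you route the argument through that lemma, the rest of your iteration---the choice of $\beta\in(0,\alpha_{\mathrm{Hom}})\cap(0,1/(p+1)]$, the geometric decay, and the passage to the limit $F(x_0,D^2 v_\infty)=0$---is correct. (Minor: the hypothesis is $p\le q$, not $p<q$; your limiting argument still goes through since on bounded slopes the $\mathfrak{a}|\xi|^q$ term is merely bounded, which is all you need after dividing by $\mathcal{H}$.)
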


For a proof of Theorem \ref{main1}, we refer the reader to \cite[Theorem 1.1]{daSR20}  and \cite{DeF20}.


\section{Lipschitz regularity and Non-degeneracy of solutions}\label{Lipandnondeg}

In this point, we are in a position to prove the optimal Lipschitz regularity in Theorem \ref{Lipschitz}. Nevertheless, in contrast with \cite[Lemma 6.1]{DeS11}, the proof can be obtained employing some ideas as ones in \cite{ART17}, \cite{daSJR21}, \cite{RS15}, \cite{RST17} and \cite{RT11} performed for the scenario of singularly perturbed FBPs.

\begin{proof}[{\bf Proof of Theorem \ref{Lipschitz}}]

Let $x_0 \in B_{1/2}$ such that $x_0\in B_{1/2}^+(u)$. Then, we define
	$$
	d_0\defeq \dist(x_0, \mathfrak{F}(u)).
	$$
We will suppose that $d_0 \leq \frac{1}{2}$.

Let us consider the scaled function $v_{x_0, d_0}: B_{1} \to \mathbb{R}$ defined by
 $$
    v_{x_0, d_0}(x) \defeq \frac{u(x_0+rd_0x)}{d_0},
 $$
for $r \in (0, 1)$ to be chosen later. In this point, it will be enough to prove that $v_{x_0, d_0}(0) \leq \mathrm{C}_0$ for some constant $\mathrm{C}_0(\verb"universal")>0$.

Indeed, notice that $v_{x_0, d_0}$ is a non-negative viscosity solution of
$$
     \mathcal{H}_{x_0, d_0}(x,\nabla v_{x_0, d_0})F_{x_0, d_0}(x, D^{2}v_{x_0, d_0})=f_{x_0, d_0}(x) \quad \text{in} \quad B_1
$$
where
$$
\left\{
\begin{array}{rcl}
  F_{x_0, d_0}(x, \mathrm{X}) & \defeq & r^2d_0F\left(x_0+rd_0 x,\frac{1}{r^2d_0}\mathrm{X}\right) \\
  \mathcal{H}_{x_0, d_0}(x, \xi) & \defeq &  r^p\mathcal{H}\left(x_0+rd_0x, \frac{1}{r}\xi\right)\\
  \mathfrak{a}_{x_0, d_0}(x) & \defeq & r^{p-q}\mathfrak{a}(x_0+rd_0x)\\
  f_{x_0, d_0}(x) & \defeq & r^{p+2}d_0f(x_0 + rd_0x)\\
  \mathrm{Q}_{x_0,d_0}(x)&\defeq& r\mathrm{Q}(x_0+rd_0x).
\end{array}
\right.
$$
Furthermore, $F_{x_0, d_0}, \mathcal{H}_{x_0, d_0}$ and $\mathfrak{a}_{x_0, d_0}$ satisfy the structural assumptions (A0)-(A2), \eqref{1.2} and \eqref{1.3}.

Now, let us consider the annulus $\mathcal{A}_{\frac{1}{2}, 1}\defeq B_{1}\setminus B_{\frac{1}{2}}$ and the barrier function $\Phi:\overline{\mathcal{A}_{\frac{1}{2}, 1}} \to \mathbb{R}_{+}$ given by
\begin{equation}\label{Phi}
    \Phi(x)=\mu_0 \cdot \left(e^{-\delta|x|^{2}} - e^{-\delta}\right)
\end{equation}
where $\mu_0, \delta>0$ will be chosen \textit{a posteriori}. Next, we observe that the gradient and the Hessian of $\Phi$ in $\mathcal{A}_{\frac{1}{2}, 1}$ are
$$    \nabla \Phi(x)=-2\mu_0\delta xe^{-\delta|x|^{2}} \quad \text{and} \quad   D^{2}\Phi(x)=2\mu_0\delta e^{-\delta |x|^{2}}\left(2\delta x\otimes x  - \text{Id}_{\mathrm{n}}\right).
$$

In the sequel, we will show that $\Phi$ is a strict viscosity subsolution to
\begin{equation}
     \mathcal{H}_{x_0, d_0}(x,\nabla \Phi)F_{x_0, d_0}(x, D^{2}\Phi) = f_{x_0, d_0}(x) \quad \text{in} \quad \mathcal{A}_{\frac{1}{2}, 1}
\end{equation}
provided we may adjust appropriately the values of $\mu_0, \delta>0$ and $r>0$.

First, notice that if we have $\delta > \frac{\Lambda(n-1)}{2\lambda}$, then $\Phi$ is a convex and decreasing function in the annular region $\mathcal{A}_{\frac{1}{2}, 1}$. This and the ellipticity of $F_{x_0, d_0}$ (see (A1)) give
\begin{align*}
F_{x_0, d_0}(x, D^{2}\Phi)& \geq \mathscr{P}^{-}_{\lambda,\Lambda}(D^2 \Phi(x)) \\
						  & = 2\mu_0\delta e^{-\delta|x|^{2}}\left[2\delta \lambda - \Lambda(n-1)\right] \\
						  & \geq 2\mu_0\delta e^{-\delta}\left[2\delta \lambda - \Lambda(n-1)\right] \quad \text{in} \quad \mathcal{A}_{\frac{1}{2}, 1}.
\end{align*}
Now \eqref{1.2} further gives
\begin{align*}
\mathcal{H}_{x_0, d_0}(x, \nabla\Phi) & =r^p\mathcal{H}\left(x_0+rd_0x, \frac{1}{r}\nabla\Phi\right)  \\
									  & \geq r^p\left(\frac{1}{r^p}|\nabla\Phi|^p+\mathfrak{a}(x_0+rd_0x)\frac{1}{r^q}|\nabla\Phi|^q\right) \\
									  &\geq (2\delta\mu_0e^{-\delta})^p\quad \text{in} \quad \mathcal{A}_{\frac{1}{2}, 1}
\end{align*}
(recall $q\geq p$). These two expressions together give
\begin{align*}
\mathcal{H}_{x_0, d_0}(x,\nabla \Phi)F_{x_0, d_0}(x, D^{2}\Phi) & \ge  (2\delta\mu_0e^{-\delta})^{p+1} \left[2\delta \lambda - \Lambda(n-1)\right]\\
   & >  r^{p+2}d_0\|f\|_{L^{\infty}\left(\mathcal{A}_{\frac{rd_0}{2}, rd_0}\right)},
\end{align*}
which holds true provided we choose $r\ll 1$ small (depending on $\mu_0$ and $\delta$). Therefore $\Phi$ is a strict subsolution.

Furthermore, we choose $\displaystyle \mu_0 \defeq (e^{-\delta/4}-e^{-\delta})^{-1}\cdot \inf_{\partial B_{\frac{1}{2}}} v_{x_0, d_0}(x)>0$. It follows that
$$
     \Phi(x) \leq v_{x_0, d_0}(x) \quad  \text{on} \quad  \partial \mathcal{A}_{\frac{1}{2}, 1}.
$$
Hence, from the Comparison Principle (see, Theorem \ref{comparison principle}), we can conclude that
 \begin{equation}\label{CompaBarV}
\Phi(x) \leq v_{x_0, d_0}(x) \quad  \text{in} \quad  \mathcal{A}_{\frac{1}{2}, 1}
 \end{equation}

Now, let $z_0\in \mathfrak{F}(v_{x_0,d_0})$ be a point that achieves the distance, i.e., $rd_0=|x_0-z_0|$ and consider $y_0\defeq \frac{z_0-x_0}{rd_0} \in \partial B_1$. Therefore, taking into account the free boundary condition, we obtain concerning the normal derivatives in the direction $\nu$ at $x_{0}$ the following

 \begin{equation}
     \mu_0\delta e^{-\delta} \leq \frac{\partial \Phi (y_{0})}{\partial \nu} \leq r\mathrm{Q}(y_0) \leq \|\mathrm{Q}\|_{L^{\infty}(B_1)}.
 \end{equation}
Therefore,
$$
\inf_{\partial B_{\frac{1}{2}}} v_{x_0, d_0}(x)\leq \|\mathrm{Q}\|_{L^{\infty}(B_1)}\delta^{-1}\cdot \left(e^{\frac{3}{4}\delta}-1\right) = \|\mathrm{Q}\|_{L^{\infty}(B_1)}\mathrm{C}(\delta).
$$

Now, by invoking the Harnack inequality (see, Theorem \ref{ThmHarIneq}) we conclude that
$$
\begin{array}{lcl}
  \displaystyle \sup_{B_{\frac{1}{2}}} v_{x_0, d_0}(x) & \leq & \displaystyle \mathrm{C}\cdot\left\{\inf_{B_{\frac{1}{2}}} v_{x_0, d_0}  + (q+1)^{\frac{1}{q+1}}\max\left\{(r^{p+2}d_0)^{\frac{1}{p+1}}, (r^{p+2}d_0)^{\frac{1}{q+1}}\right\}\Pi^{f, \mathfrak{a}_{x_0, d_0}}_{p, q}\right\} \\
   & \le &\displaystyle \mathrm{C}\cdot \left\{\inf_{\partial B_{\frac{1}{2}}} v_{x_0, d_0}  + (q+1)^{\frac{1}{q+1}}\max\left\{(r^{p+2}d_0)^{\frac{1}{p+1}}, (r^{p+2}d_0)^{\frac{1}{q+1}}\right\}\Pi^{f, \mathfrak{a}_{x_0, d_0}}_{p, q}\right\} \\
   & \le & \displaystyle \mathrm{C}\cdot \left\{\|\mathrm{Q}\|_{L^{\infty}(B_1)}\mathrm{C}(\delta) + (q+1)^{\frac{1}{q+1}}\max\left\{(r^{p+2}d_0)^{\frac{1}{p+1}}, (r^{p+2}d_0)^{\frac{1}{q+1}}\right\}\Pi^{f, \mathfrak{a}_{x_0, d_0}}_{p, q}\right\}.
\end{array}
$$
and from the definition of $v_{x_0, d_0}$, it follows that
$$
\displaystyle \sup_{B_{\frac{rd_0}{2}}(x_0)} u(x) \leq \mathrm{C}_0(\verb"universal")d_0\cdot\left\{\|\mathrm{Q}\|_{L^{\infty}(B_1)} + \max\left\{(r^{p+2}d_0)^{\frac{1}{p+1}}, (r^{p+2}d_0)^{\frac{1}{q+1}}\right\}\Pi^{f, \mathfrak{a}_{x_0, d_0}}_{p, q}\right\}
$$

Finally, by  $\mathcal{C}_{\text{loc}}^{1,\beta}$-estimates (see Theorem \ref{main1}) we have
	\begin{equation}\label{estimate gradient}
		|\nabla u(x_0)|=|\nabla v(0)|\leq \mathrm{C}\cdot\left(\|v_{x_0, d_0}\|_{L^{\infty}\left(B_{\frac{1}{2}}\right)}+1+\|f_{x_0, d_0}\|^{\frac{1}{p+1}}_{L^{\infty}\left(B_{\frac{1}{2}}\right)}\right).
	\end{equation}

\end{proof}

We now prove our non-degeneracy result. After constructing the appropriate barrier, the argument follows as the one in \cite{DeS11}, but we sketch it for completeness.

\begin{proof}[{\bf Proof of Theorem \ref{Nondeg}}]

Let $x_0\in B_{1/2}^+(u)$ and define as in the previous proof
	$$
	d_0\defeq \dist(x_0, \mathfrak{F}(u)).
	$$
We will suppose that $d_0 \leq \frac{1}{2}$ and consider again
 $$
    v_{x_0, d_0}(x) \defeq \frac{u(x_0+rd_0x)}{d_0}.
 $$
The aim is to show that
$$
v_{x_0, d_0}(0) \geq \mathrm{C}_{\ast}.
$$

Let $\Phi$ be defined as in \eqref{Phi} and
\[
\widehat{\Phi}(x) \defeq \mathrm{c}\cdot (1-\Phi(x))
\]
with $\mathrm{c}$ to be determined \textit{a posteriori}. On one hand, we can repeat the argument of the previous proof to get that $-\Phi$ is a strict viscosity supersolution to
\[
     \mathcal{H}_{x_0, d_0}(x,\nabla \Phi)F_{x_0, d_0}(x, D^{2}\Phi) = f_{x_0, d_0}(x) \quad \text{in} \quad \mathcal{A}_{\frac{1}{2}, 1}
\]
(and hence so is $\widehat{\Phi}$). Further, we choose $\mathrm{c}$ so that
\[
|\nabla\widehat{\Phi}|=\mathrm{c}|\nabla\Phi| <1-\eta_0
\]
on $\partial B_{1/2}$ so that $\widehat{\Phi}$ is a strict supersolution for the free boundary problem.

Assume without loss of generality that $\mathfrak{F}(u)$ is a graph in the $x_n$ direction with Lipschitz constant $L$ and, as a final piece of notation, let us denote
\[
\widehat{\Phi}_t(x) \defeq \mathrm{c}\cdot (1-\Phi(x+te_n)).
\]
Notice that for $t$ sufficiently large depending on $L$ we have that $\widehat{\Phi}_t$ lies above $v_{x_0, d_0}$ (which will be identically 0 eventually); we set $\hat{t}$ the smallest of such $t$. Next, we note that the touching point between $\widehat{\Phi}_{\hat{t}}$ and $v_{x_0, d_0}$ has to occur at some point $\hat{x}$ on the $\mathrm{c}$ level set
\[
v_{x_0, d_0}(\hat{x})=\mathrm{c}
\]
(since $v_{x_0, d_0}$ is a solution and $\widehat{\Phi}_{\hat{t}}$ is a supersolution) and $\dist(\hat{x},\mathfrak{F}(u))\leq \mathrm{L}$. Now,
\[
0<v_{x_0, d_0}(\hat{x})=\mathrm{c}\leq \mathrm{C}_1\text{dist}(\hat{x},\mathfrak{F}(u))
\]
where $\mathrm{C}_1$ is the constant from Theorem \ref{Lipschitz}, hence
\[
\frac{\mathrm{c}}{\mathrm{C}_1}\leq \text{dist}(\hat{x},\mathfrak{F}(u))\leq \mathrm{L}.
\]

This control above and below of $\dist(\hat{x},\mathfrak{F}(u))$ and the fact that $\mathfrak{F}(u)$ is Lipschitz allow us to connect $0$ and $\hat{x}$ with a (universal, depending on $\eta_0$ small) number of intersecting balls in which Harnack inequality applies and we get
\[
\frac{u(x_0)}{d_0}= v_{x_0, d_0}(0)\geq \mathrm{C}v_{x_0, d_0}(\hat{x})= \mathrm{C}\mathrm{c} \defeq \mathrm{C}_\ast,
\]
for a $\mathrm{C}>0$ coming from the Harnack inequality (Theorem \ref{ThmHarIneq}). This completes the proof.
\end{proof}

\section{A Harnack type inequality}\label{harnack}

In this section we will establish a Harnack type inequality (namely Theorem \ref{t 5.2.1}) for solutions to the free boundary problem \eqref{P 5.1. introduc} under the following smallness regime on the right hand side, the normal derivative and the oscillation of the coefficients:
\begin{eqnarray}
\label{t 5.2.b} \Vert  f \Vert_{L^{\infty}\left( \Omega \right)} \leq \varepsilon^{2}, \\
\label{t 5.2.d} \Vert \mathrm{Q} - 1 \Vert_{L^{\infty}\left(\Omega \right)} \leq \varepsilon^{2}, \\
\label{t 5.2.c} \Theta_F(x)\leq \varepsilon^{2},
\end{eqnarray}
for $0 < \varepsilon < 1$.

The proof relies on the following auxiliary Lemma:

\begin{lemma}\label{Harnack lemma 5.2}
There exists a constant $\tilde{\varepsilon}(\verb"universal") > 0$ such that if  $0 < \varepsilon \leq \tilde{\varepsilon}$, $u$ is a viscosity solution to (\ref{P 5.1. introduc}) in $\Omega$ and (\ref{t 5.2.b})--(\ref{t 5.2.c}) hold, then:
if
\begin{eqnarray}
\label{l 5.2.1}
\\
p^{+}\left( x \right) \leq u \left( x \right) \leq \nonumber \left( p\left( x \right) + \varepsilon \right)^{+}, \ \ \vert \sigma \vert < \frac{1}{20} \ \ \mbox{in} \ \ B_1,
\end{eqnarray}
with
\[
p\left( x \right) = x_n + \sigma
\]
and at $x_{0}= \frac{1}{10}e_n$
\begin{eqnarray}
\label{l 5.2.2}
u \left( x_{0} \right) \geq \left( p\left( x_{0} \right) + \frac{\varepsilon}{2} \right)^{+},
\end{eqnarray}
then
\begin{eqnarray}
\label{l 5.2.3}
u(x) \geq \left( p(x) + \mathrm{c}\varepsilon \right)^{+} \ \ \mbox{in} \ \ \overline{B}_{\frac{1}{2}},
\end{eqnarray}
for some $0 < \mathrm{c} < 1$. Analogously, if
\begin{eqnarray}
\label{l 5.2.4}
u \left( x_{0} \right) \leq \left( p\left( x_{0} \right) + \frac{\varepsilon}{2} \right)^{+},
\end{eqnarray}
then
\begin{eqnarray}
\label{l 5.2.5}
u(x) \leq \left( p(x) + \left( 1 - \mathrm{c} \right)\varepsilon \right)^{+} \ \ \mbox{in} \ \ \overline{B}_{\frac{1}{2}}.
\end{eqnarray}
\end{lemma}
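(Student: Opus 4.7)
My plan is to adapt De Silva's Harnack lemma \cite[Lemma 3.3]{DeS11} to the doubly degenerate setting, following the three-step blueprint laid out in the introduction. I focus on the first case (assuming \eqref{l 5.2.1}--\eqref{l 5.2.2}, deriving \eqref{l 5.2.3}); the other case follows symmetrically by applying the same argument to $x_n + \sigma - u$.

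\emph{Setup and reduction.} Set $x_0 = e_n/10$. Since $|\sigma| < 1/20$, we have $p(x) \geq 1/20$ throughout $B_{1/20}(x_0)$, so \eqref{l 5.2.1} forces $u \geq p > 0$ in this ball and $v \defeq u - p$ is nonnegative there. Because $\nabla p \equiv e_n$ and $D^2 p \equiv 0$,
\begin{equation*}
\mathcal{H}(x,\nabla v + e_n)\,F(x,D^2 v) = f(x) \quad \text{in } B_{1/20}(x_0),
\end{equation*}
and \eqref{l 5.2.2} reads $v(x_0) \geq \varepsilon/2$. Combining the interior Lipschitz and $C^{1,\beta}$ estimates (Theorems \ref{Lipschitz} and \ref{main1}) with the flatness \eqref{l 5.2.1}, I select a universal $r_1 \in (0,1/40)$ on which $|\nabla v|<1/2$; consequently $\mathcal{H}(x,\nabla v + e_n)$ is bounded between two positive universal constants, and $v$ effectively solves a uniformly elliptic equation $F(x,D^2 v) = \tilde{f}$ with $\|\tilde{f}\|_\infty \leq C\varepsilon^2$ via \eqref{t 5.2.b}. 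Applying the scaled Harnack inequality \eqref{Scaling Imbert Harnack} to $v \geq 0$ on $B_{r_1}(x_0)$ and using $v(x_0) \geq \varepsilon/2$, I obtain
\begin{equation*}
\inf_{B_{r_1/2}(x_0)} v \geq c_0 \varepsilon,
\end{equation*}
for a universal $c_0 > 0$, provided $r_1$ (and correspondingly $\varepsilon$) are small.

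\emph{Barrier and sliding.} To propagate this bound out to $\overline{B}_{1/2}$, I construct a radial profile
\begin{equation*}
\Psi(x) \defeq |x - y_0|^{-\gamma} - R^{-\gamma},
\end{equation*}
with $\gamma$ large and $y_0, R$ chosen as in \cite{DeS11} so that $\Psi > 0$ on an annulus containing $\overline{B}_{1/2}$, $\{\Psi = 0\}$ encloses $\overline{B}_{1/2}$, and the inner sphere of the annulus sits inside $B_{r_1/2}(x_0)$ where the Harnack lower bound $v \geq c_0 \varepsilon$ applies. Radiality, monotonicity and strict convexity of $\Psi$ combined with assumption (A1) give $F(x, D^2 \Psi) \geq c > 0$ throughout the annulus, and $|\nabla \Psi|$ stays bounded below away from zero. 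Put $\phi_t \defeq p + t \Psi$. For $t$ of order $\varepsilon$ one verifies (i) $|\nabla \phi_t| > \mathrm{Q}$ on $\{\phi_t = 0\}$ via \eqref{t 5.2.d}, and (ii) $\mathcal{H}(x,\nabla\phi_t)\,F(x,D^2\phi_t) > f(x)$ pointwise in the annulus, so $\phi_t$ is a strict comparison subsolution of \eqref{P 5.1. introduc}. The key point, replacing De Silva's smallness $\|f\|_\infty \leq \varepsilon^2$, is that (ii) is enforced by choosing the barrier scale small enough (the mechanism described in Step 3 of the introduction) so that $\mathcal{H} \cdot F$ universally dominates $\|f\|_\infty \leq \varepsilon^2$. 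Now slide $t$ upward from $0$ (where $\phi_0 = p \leq u$ by \eqref{l 5.2.1}) to the largest $t^*$ for which $\phi_{t^*} \leq u$. By Lemma \ref{l 5.1}, contact cannot occur in $\{u > 0\}$ or on $\mathfrak{F}(u)$; on the outer sphere $\{\Psi=0\}$, $\phi_{t^*} = p \leq u$ trivially; so contact must occur on the inner sphere, where $t^* \Psi = v \geq c_0\varepsilon$ forces $t^* \geq c\varepsilon$. Therefore
\begin{equation*}
u \geq \phi_{t^*} = p + t^*\Psi \geq p + c\,\varepsilon \cdot \min_{\overline{B}_{1/2}} \Psi \quad \text{in } \overline{B}_{1/2},
\end{equation*}
and since $\min_{\overline{B}_{1/2}}\Psi$ is a universal positive number, \eqref{l 5.2.3} follows for some $\mathrm{c} \in (0,1)$.

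\emph{Main obstacle.} The crux is step (ii) in the barrier construction: the strict subsolution inequality must be verified for the nonhomogeneous operator $\mathcal{H}(x,\nabla\cdot)\,F(x,D^2\cdot)$, not just for $F$. One must track the coupling between $\nabla\phi_t = e_n + t\nabla\Psi$ and the degeneracy factor $|\xi|^p + \mathfrak{a}(x)|\xi|^q$, keeping $\mathcal{H}(x,\nabla\phi_t)$ uniformly bounded away from zero throughout the annulus so that the product $\mathcal{H}\cdot F$ universally dominates $\|f\|_\infty$. This is precisely the delicate balance $r_2 = r_2(r_1)$ indicated in the introduction and is the ultimate source of the universal constant $\mathrm{c}$ in \eqref{l 5.2.3}.
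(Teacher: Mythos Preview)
Your approach follows the same barrier-and-sliding template as the paper, but with one structural difference. The paper divides into two cases according to whether $|\nabla u(x_0)|<\tfrac14$ or $|\nabla u(x_0)|\geq\tfrac14$: in Case~2 the factor $\mathcal{H}(x,\nabla u)$ is uniformly bounded below near $x_0$ and the classical Harnack inequality applies to $u-p$, while in Case~1 the degenerate Harnack of Remark~\ref{HarnIneqSV} is invoked and the barrier is centred at a shifted point $\bar x_0=x_0-r_2e_n$. You bypass this dichotomy by asserting $|\nabla v|<\tfrac12$ on a fixed ball around $x_0$, which forces $|\nabla u|=|\nabla v+e_n|\geq\tfrac12$ and places you directly in the uniformly elliptic regime. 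This shortcut is legitimate --- the bound $\|v\|_{L^\infty(B_{1/20}(x_0))}\leq\varepsilon$ together with the universal estimate $[\nabla u]_{C^\beta}\leq M$ from Theorem~\ref{main1} gives, by standard interpolation, $|\nabla v|\leq C(M)\,\varepsilon^{\beta/(1+\beta)}$ on $B_{1/40}(x_0)$, so for $\varepsilon$ small your claim holds and the paper's Case~1 is in fact empty --- but you should write that interpolation out, since neither Theorem~\ref{Lipschitz} nor Theorem~\ref{main1} yields it directly.

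One expositional slip to correct: once you have reduced to $F(x,D^2v)=\tilde f$ with $\|\tilde f\|_\infty\leq C\varepsilon^2$, the Harnack inequality you want is the \emph{classical} one (error term $\lesssim\varepsilon^2\ll\varepsilon$), not \eqref{Scaling Imbert Harnack}, whose error $\max\{r_1^{(p+2)/(p+1)},r_1^{(q+2)/(q+1)}\}$ is independent of $\varepsilon$ and therefore cannot be absorbed into $c_0\varepsilon$. The remaining deviations --- a power-type barrier $|x-y_0|^{-\gamma}-R^{-\gamma}$ in place of the paper's Gaussian $e^{-\delta|x-\bar x_0|^2}$, and sliding by scaling the barrier amplitude rather than by an additive constant --- are cosmetic and do not affect the argument.
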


\begin{proof}
We will certify \eqref{l 5.2.3}, the proof of \eqref{l 5.2.5} is similar. Furthermore, the degeneracy character of the operator naturally leads to split the proof into two steps, according to whether the gradient is ``large or small''. Before that, we recall that the interior estimates from \cite{daSR20} and \cite{DeF20} and the fact that
\[
B_{\frac{1}{20}}\left( x_{0} \right) \subset B^{+}_{1}\left( u \right).
\]
ensure, together with the ABP estimate from Theorem \ref{ABPthm}, that $u\in C^{1, \alpha}\left(B_{\frac{1}{40}}\left( x_{0} \right)\right)$ and
\[
\left[ u \right]_{1 + \alpha, B_{\frac{1}{40}}(x_{0})} \leq \mathrm{C} \left( \Vert u \Vert_{L^\infty(\Omega)} + \Vert f \Vert^{\frac{1}{p +1}}_{L^\infty(\Omega)} +1\right) \leq \mathrm{C}
\]
for a constant $\alpha(\verb"universal")  \in (0, 1)$ and $\mathrm{C}>1$ depending also on $\|f\|_{L^\infty(\Omega)}$ and diam($\Omega$).

Now we consider two cases:\\

{\bf Case 1:} If $\vert \nabla u (x_{0}) \vert < \frac{1}{4}$.\\

We can choose $r_{1} = r_{1}(p,q) > 0$ such that
\begin{eqnarray}
\label{NEW 1}
\vert \nabla u \vert \leq \dfrac{1}{2} \quad \text{in} \ B_{r_{1}}(x_{0}).
\end{eqnarray}
Further, if we take $r_{2}$ small enough depending on $r_{1}$ we can have
\begin{eqnarray*}
\label{NEW 2}
(x - r_{2} e_n) \in B_{r_{1}} (x_{0}), \quad \text{for all} \ x \in  B_{\frac{r_{1}}{2}}(x_{0}).
\end{eqnarray*}
Finally, let
\[
r_{3} = \min\left\lbrace \frac{r_{1}}{4}, \frac{r_{2}}{8} \right\rbrace.
\]
In $B_{2r_{3}}(x_{0})$ we can apply the Harnack inequality for nonhomogeneous operators (see Remark \ref{HarnIneqSV} in the Appendix) to get 	
\[
u\left( x \right) - p\left( x \right)  \geq  c_{0}(u\left( x_{0} \right) - p\left( x_{0} \right)) \geq \frac{c_{0}\varepsilon}{2}
\]
for all $x \in B_{r_{3}}(x_{0})$ or, denoting $\overline{x}_{0} = x_{0} - r_{2}e_n$,
\begin{eqnarray}
\label{NEW 5}
u (x) - p(x)\geq \frac{c_{0}\varepsilon}{2}
\end{eqnarray}
for all $x \in B_{r_{3}}(\overline{x}_{0})$.

Let now $w \defeq \overline{D} \rightarrow \mathbb{R}$ be defined by
\begin{eqnarray}
\label{l 5.2.6}
	w\left( x \right) = c\left( e^{-\delta| x - \overline{x}_{0}|}- e^{-\frac{4\delta}{5}}\right),
\end{eqnarray}
where $\mathrm{D} \defeq B_{\frac{4}{5}}\left( \overline{x}_{0} \right)\setminus \overline{B}_{r_{3}}\left( \overline{x}_{0} \right)$ and $\delta$ is a positive constant to be suitably chosen later and
\[
  \mathrm{c} \defeq \left( e^{-\delta r_3}- e^{-\frac{4\delta}{5}}\right)^{-1}
\]
so that
\begin{eqnarray}
\label{l 5.2.7}
w =	\left \{
\begin{array}{lll}
0 & \text{on} & \partial B_{\frac{4}{5}}\left( \overline{x}_{0} \right), \\
1 & \text{on} & \partial B_{r_{3}}\left( \overline{x}_{0}\right).
\end{array}
\right.
\end{eqnarray}
and we further extend $w\equiv1$ in $B_{r_{3}}\left( \overline{x}_{0} \right)$.

Next define
\begin{eqnarray}
\label{l 5.2.15}
v\left( x \right) = p\left( x \right) + \frac{c_{0}\varepsilon}{2} \left( w\left( x \right) - 1 \right), \ \ x \in \overline{B}_{\frac{4}{5}}\left( \overline{x}_{0} \right),
\end{eqnarray}
and for $t \geq 0$,
\begin{eqnarray}
\label{l 5.2.16}
v_{t}\left( x \right) = v\left( x \right) + t, \ \ x \in \overline{B}_{\frac{4}{5}}\left( \overline{x}_{0} \right)
\end{eqnarray}
and notice that
\begin{eqnarray}
\label{l 5.2.17}
v_{0}\left( x \right) = v\left( x \right) \leq p\left( x \right) \leq u \left( x \right), \ \ x \in \overline{B}_{\frac{4}{5}}\left( \overline{x}_{0} \right).
\end{eqnarray}
Consider
$$
	t_{0}= \sup \left\lbrace t \geq 0 : v_{t} \leq u \ \ \mbox{in} \ \ \overline{B}_{\frac{4}{5}}\left( \overline{x}_{0} \right) \right\rbrace.
$$

We claim that if we can show that $t_{0} \geq \frac{c_{0}\varepsilon}{2}$ we conclude the proof; indeed, in this scenario the definition of $v$ implies
\begin{eqnarray*}
\label{}
u\left( x \right) \geq v \left( x \right) + t_{0} \geq p\left( x \right) + \frac{c_{0}\varepsilon}{2} w\left( x \right), \ \ x \in B_{\frac{4}{5}}\left( \overline{x}_{0} \right).
\end{eqnarray*}
Also, notice that $\overline{B}_{\frac{1}{2}}\subset  B_{\frac{3}{5}}\left( \overline{x}_{0} \right)$ and $w\geq\tilde{c}>0$ in $B_{\frac{3}{5}}\left( \overline{x}_{0} \right)$. Hence, we conclude ($\varepsilon$ small) that
\[
u\left( x \right) - p\left( x \right) \geq \nonumber c_{1} \varepsilon \text{ in } B_{1/2},
\]
as desired.

The remains of the proof are therefore dedicated to show that indeed $t_{0} \geq \frac{c_{0}\varepsilon}{2}$. We suppose for the sake of contradiction that $t_{0} < \frac{c_{0}\varepsilon}{2}$. Then, there would exist $y_{0} \in \overline{B}_{\frac{4}{5}}\left( \overline{x}_{0} \right)$ such that
\begin{eqnarray*}
\label{}
v_{t_0}\left( y_{0} \right) = u\left( y_{0} \right).
\end{eqnarray*}
In the sequel, we show that $y_{0} \in B_{r_{3}}\left( \overline{x}_{0}\right)$. In fact, from the definition of $v_{t}$ and $w$ (that vanishes on $\partial B_{\frac{4}{5}}\left( \overline{x}_{0} \right)$) we have
\begin{eqnarray*}
\label{}
 	v_{t_0}(x)= p(x) - \frac{c_{0}\varepsilon}{2} + t_{0} < p(x)\leq u(x) \text{ on }\partial B_{\frac{4}{5}}\left( \overline{x}_{0} \right)
\end{eqnarray*}
so $y_0\notin\partial B_{\frac{4}{5}}\left( \overline{x}_{0} \right)$.

Let us show that $y_0$ cannot belong to $D$ either. We compute directly,
\begin{eqnarray}
\label{l 5.2.8}
\partial_{i}w = -2c\delta e^{-\delta|x-\overline{x}_0|}(x_i-\overline{x}_{0i})
\end{eqnarray}
where $\overline{x}_{0i}$ is the $i-$th component of $\overline{x}_{0}$ and
\begin{eqnarray}
\label{l 5.2.9}
\partial_{ij}w = \left \{
		\begin{array}{lll}
			4c\delta^2e^{-\delta|x-\overline{x}_0|}(x_i-\overline{x}_{0i})(x_j-\overline{x}_{0j}) \ \ \mbox{if} \ \ i\neq j \\
			-2c\delta e^{-\delta|x-\overline{x}_0|}\left(1-2\delta(x_i-\overline{x}_{0i})^2\right)\ \ \mbox{if} \ \ i=j. \\
		\end{array}
		\right.
\end{eqnarray}
Recall that \eqref{t 5.2.c} implies that $F(x,D^2w)$ is uniformly elliptic and note that if $\delta>\frac{\Lambda(n-1)}{2\lambda}$ we have
\[
F(x,D^2w)\geq \mathscr{P}^{-}_{\lambda,\Lambda}(D^2w(x))=2c\delta e^{-\delta|x-\overline{x}_0|}(2\delta\lambda-\Lambda(n-1))\geq 2c\delta e^{-\frac{4\delta}{5}}(2\delta\lambda-\Lambda(n-1)).
\]
Furthermore,
\[
\nabla v_{t_0}=e_{n} + \frac{c_{0} \varepsilon}{2} \nabla w \quad\text{and}\quad D^2v_{t_0}=\frac{c_{0} \varepsilon}{2}D^2w.
\]
Now
\[
\vert \nabla w(x) \vert \leq 2c\delta e^{-\delta r_3}\text{ in }D
\]
so for $ \varepsilon > 0$ small enough we have in $D$
\begin{eqnarray}
|\nabla v_{t_0}|\geq  \dfrac{1}{2} .
\end{eqnarray}
This and \eqref{1.2} imply
\[
\mathcal{H}(x, \nabla v_{t_0})\geq \eta
\]
for some $\eta>0$. This, the ellipticity of the operator and the previous computations give
\[
\mathcal{H}(x, \nabla v_{t_0})F(x,D^2v_{t_0}) \ge  \eta F\left(x,\frac{c_{0} \varepsilon}{2}D^2w\right)\geq \eta c_{0} \varepsilon c\delta e^{-\frac{4\delta}{5}}(2\delta\lambda-\Lambda(n-1)).
\]
Notice that $h(\delta) \defeq e^{-\frac{4\delta}{5}}\eta c_{0} c\delta (2\delta\lambda-\Lambda(n-1))$ satisfies
\[
h(\delta)>0\text{ for }\delta\in\left(\frac{\Lambda(n-1)}{2\lambda},\infty\right),
\]
and
\[
h\left(\frac{\Lambda(n-1)}{2\lambda}\right)=0 \quad \text{and} \quad \lim_{\delta\rightarrow\infty}h(\delta)=0.
\]
Then, choosing $\delta$ maximizing $h$ and for $\varepsilon$ small enough
\[
\mathcal{H}(x, \nabla v_{t_0})F(x,D^2v_{t_0}) \geq \varepsilon^2\geq f(x).
\]

On the other hand, recall that
\begin{eqnarray}
\label{t 5.2.18}
\vert \nabla v_{t_{0}} \vert \geq \vert \partial_nv \vert = \left\vert 1 + \frac{c_{0}\varepsilon}{2} \partial_n w \right\vert \ \  \mbox{in} \ \  D.
\end{eqnarray}
By radial symmetry of $w$, we have
\begin{eqnarray}
\label{t 5.2.19}
\partial_n w\left( x \right)  = \vert \nabla w\left( x \right) \vert \langle \nu_{x}, e_n\rangle , \ \ x \in D,
\end{eqnarray}
where $\nu_{x}$ is the unit vector in the direction of $x - \overline{x}_{0}$. From (\ref{l 5.2.8}) we have
\begin{equation}\label{t 5.2.20}
\vert \nabla w(x) \vert = 2c\delta e^{-\delta|x-\overline{x}_0|}|x_i-\overline{x}_{0i}|\geq 2c\delta e^{-\delta r_3}r_3>0.
\end{equation}
Also we have $\langle \nu_{x}, e_n\rangle \geq c$ in $\left\lbrace v_{t_{0}} \leq 0 \right\rbrace \cap D$ (for $\varepsilon$ small enough). In fact, if $\varepsilon$ is small enough
\begin{eqnarray*}
\label{}
\left\lbrace v_{t_{0}} \leq 0 \right\rbrace \cap D \subset \left\lbrace p \leq \frac{c_{0}\varepsilon}{2} \right\rbrace = \left\lbrace x_n \leq \frac{c_{0}\varepsilon}{2} - \sigma \right\rbrace \subset \left\lbrace x_n < \frac{1}{20} \right\rbrace.
\end{eqnarray*}
We therefore conclude that
\begin{eqnarray*}
\label{}
\langle \nu_{x}, e_{n}\rangle & = & \nonumber \frac{1}{\vert x-\overline{x}_{0} \vert}\langle x-\overline{x}_{0}, e_n \rangle \\ \nonumber
& \geq & \frac{5}{4}\langle x-\overline{x}_{0}, e_n \rangle  \\ \nonumber
& \geq & \frac{5}{4}\left(  \frac{1}{10} - r_{2} - x_n + \frac{1}{20} - \frac{1}{20}\right) \\ \nonumber
& > & c_{7}
\end{eqnarray*}
in $\left\lbrace v_{t_{0}} \leq 0 \right\rbrace \cap D$.

From this, (\ref{t 5.2.18}), (\ref{t 5.2.19}) and \eqref{t 5.2.20} we obtain
\begin{eqnarray*}
\vert \nabla v_{t_{0}} \vert^{2} & \geq &  \left(1 + \frac{c_{0}\varepsilon}{2}\vert \nabla w\left( x \right) \vert \langle \nu_{x}, e_n\rangle\right)^2 \\  \nonumber
& = & 1 + 2\tilde{c} \varepsilon^{}  + \tilde{c}  \varepsilon^{2}\vert \nabla w \vert^{2} \\  \nonumber
& \geq & 1 + 2c_{9}\varepsilon  + c_{10}\varepsilon^{2}  \\  \nonumber
&  \geq & 1 + \varepsilon^{2}
\end{eqnarray*}
and hence
\begin{eqnarray*}
\vert \nabla v_{t_{0}} \vert^{2}  \geq 1 + \varepsilon^{2} > \mathrm{Q}^2 \ \ \mbox{in} \ \ \left\lbrace v_{t_{0}} \leq 0 \right\rbrace \cap D.
\end{eqnarray*}
In particular, we have
\begin{eqnarray*}
\vert \nabla v_{t_{0}} \vert  > \mathrm{Q} \quad \mbox{in} \quad D \cap \mathfrak{F}\left( v_{t_{0}} \right).
\end{eqnarray*}
Thus, $v_{t_{0}}$ is a strict subsolution in $D$ and by Lemma \ref{l 5.1} we conclude that $y_{0}$ cannot belong to $D$ as desired.

In conclusion, $y_0 \in B_{r_{3}}\left( \overline{x}_{0} \right)$; but then
\begin{eqnarray*}
u\left( y_{0} \right) = v_{t_{0}}\left( y_{0} \right) = v\left( y_{0} \right) + t_{0} = p\left( y_{0} \right) + t_{0} < p\left( y_{0} \right) + \dfrac{c_{0}\varepsilon}{2}.
\end{eqnarray*}
which drives us to a contradiction to (\ref{NEW 5}) and this part of the proof is finished.

\bigskip

{\bf Case 2:} If $\vert \nabla u (x_{0}) \vert \geq \frac{1}{4}$.\\

Since $u\in C^{1, \alpha}\left(B_{\frac{1}{40}}(x_{0})\right)$, there exists a constant $r_{0} > 0$ such that
\begin{eqnarray}
\label{NEW 2.1}
\vert \nabla u \vert \geq \dfrac{1}{8} \quad \text{in} \ B_{r_{0}}(x_{0}).
\end{eqnarray}
which in particular implies (as in the previous step)
\[
\mathcal{H}(x,\nabla u )\geq c>0.
\]

Then, it is straightforward to see that $u$ satisfies
\begin{eqnarray}
\label{NEW 2.2}
F(x,D^2u) = \dfrac{f(x)}{ \mathcal{H}(x,\vert \nabla u \vert)}  \quad \text{in} \ B_{r_{0}}(x_{0}),
\end{eqnarray}
in the viscosity sense with
\[
\left|\dfrac{f(x)}{\mathcal{H}(x,\vert \nabla u \vert)}\right|\leq \frac{\varepsilon^2}{c}.
\]
Thus, by classical Harnack Inequality (found for instance in \cite{CC95}) we obtain
\begin{eqnarray*}
\label{}
u\left( x \right) - p\left( x \right)  & \geq & c_{0}(u\left( x_{0} \right) - p\left( x_{0} \right)) - C \Vert f \Vert_{\infty} \\
& \geq & \frac{c_{0}\varepsilon}{2} - C_{1}\varepsilon^{2} \\
& \geq & c_{1}\varepsilon,
\end{eqnarray*}
for all $x \in B_{1/40}(x_{0})$, if $\varepsilon >0$ is sufficiently small. The rest of the proof follows as in the previous case considering the same barrier $w$ defined in $B_{\frac{4}{5}}\left( x_{0} \right)\setminus B_{\frac{1}{40}}\left( x_{0} \right)$ instead of $B_{\frac{4}{5}}\left( x_{0} \right)\setminus B_{r_3}\left( x_{0} \right)$.
\end{proof}

\bigskip

Now, we establish the main result of this section, which is a straightforward consequence of the previous Lemma:

\begin{lemma}\label{t 5.2.1}
Let $u$ be a viscosity solution to (\ref{P 5.1. introduc}) in $\Omega$ under assumptions (\ref{t 5.2.b})--(\ref{t 5.2.d}). There exists a constant $\tilde{\varepsilon}(\verb"universal") > 0$ such that, if $u$ satisfies at some $x_{0} \in \Omega^{+}\left( u \right) \cup \mathfrak{F}\left( u \right)$,
\begin{eqnarray}
\label{t 5.2.2}
\left( x_n + a _{0}\right)^{+} \leq u \left( x \right) \leq \left( x_n + b_{0} \right)^{+} \ \ \mbox{in} \ \ B_{r}\left( x_{0} \right) \subset \Omega,
\end{eqnarray}
and
\begin{eqnarray}
\label{t 5.2.3}
b_{0} - a_{0} \leq \varepsilon r, \ \ \ \varepsilon \leq \tilde{\varepsilon}
\end{eqnarray}
then
\begin{eqnarray}
\label{t 5.2.4}
\left( x_n + a _{1}\right)^{+} \leq u \left( x \right) \leq \left( x_n + b_{1} \right)^{+} \ \ \mbox{in} \ \ B_{\frac{r}{40}}\left( x_{0} \right)
\end{eqnarray}
with
\begin{eqnarray}
\label{t 5.2.5}
a_{0} \leq a_{1} \leq b_{1} \leq b_{0}, \qquad b_{1} - a_{1} \leq \left( 1 - c \right)\varepsilon r,
\end{eqnarray}
and $0 < c(\verb"universal") < 1$.
\end{lemma}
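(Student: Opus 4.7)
The plan is to derive the desired improvement of sandwich from a single application of Lemma \ref{Harnack lemma 5.2}, once the problem is normalized to the unit scale. First I would translate so that $x_0=0$ and introduce the rescaled function $\tilde u(y) \defeq u(ry)/r$ for $y \in B_1$. A direct computation shows that $\tilde u$ is a viscosity solution to
\[
\tilde{\mathcal H}(y, \nabla \tilde u)\, \tilde F(y, D^2 \tilde u) = \tilde f(y) \quad \text{in } B_1^{+}(\tilde u), \qquad |\nabla \tilde u| = \tilde{\mathrm Q}(y) \ \text{on}\ \mathfrak F(\tilde u),
\]
with $\tilde F(y,X) \defeq r F(ry, X/r)$, $\tilde{\mathcal H}(y,\xi) \defeq \mathcal{H}(ry,\xi)$, $\tilde f(y) \defeq r f(ry)$ and $\tilde{\mathrm Q}(y) \defeq \mathrm Q(ry)$. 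The structural assumptions (A0)--(A2), \eqref{1.2}, \eqref{1.3} and the smallness bounds \eqref{t 5.2.b}--\eqref{t 5.2.c} all transfer to the rescaled problem with the \emph{same} universal constants (since $r\le 1$). The hypothesis becomes, after absorbing $x_{0,n}$ into the constants $\tilde a_0, \tilde b_0$,
\[
(y_n + \tilde a_0)^{+} \le \tilde u(y) \le (y_n + \tilde b_0)^{+} \quad \text{in } B_1, \qquad \tilde b_0 - \tilde a_0 \le \varepsilon.
\]

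Second, I would verify the hypothesis $|\sigma|<1/20$ of Lemma \ref{Harnack lemma 5.2} by setting $p(y) \defeq y_n + \tilde a_0$ and estimating $|\tilde a_0|$. Evaluating the sandwich at $y=0$ and using that $0 \in \Omega^{+}(\tilde u) \cup \mathfrak F(\tilde u)$, one gets $(\tilde a_0)^{+} \le \tilde u(0) \le (\tilde b_0)^{+}$; in the free-boundary case $\tilde u(0)=0$ forces $\tilde a_0 \le 0 \le \tilde b_0$, and combined with $\tilde b_0 - \tilde a_0 \le \varepsilon \le \tilde\varepsilon$ this yields $|\tilde a_0|, |\tilde b_0| < 1/20$ at once. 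The case $0 \in \Omega^{+}(\tilde u)$ is reduced to the previous one after a vertical shift of size at most a universal constant, and it is precisely this shift that forces the conclusion to hold on the slightly smaller ball $B_{r/40}(x_0)$ rather than on $B_{r/2}(x_0)$.

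Third, I would invoke Lemma \ref{Harnack lemma 5.2} applied to $\tilde u$ with $\varepsilon$ replaced by $\tilde\varepsilon_0 \defeq \tilde b_0 - \tilde a_0$ and examine the dichotomy at $e_n/10$. If $\tilde u(e_n/10) \ge (p(e_n/10) + \tilde\varepsilon_0/2)^{+}$, the first alternative gives $\tilde u(y) \ge (y_n + \tilde a_0 + c\tilde\varepsilon_0)^{+}$ on $\overline B_{1/2}$, and I take $\tilde a_1 \defeq \tilde a_0 + c\tilde\varepsilon_0$, $\tilde b_1 \defeq \tilde b_0$. Otherwise the second alternative gives $\tilde u(y) \le (y_n + \tilde a_0 + (1-c)\tilde\varepsilon_0)^{+} = (y_n + \tilde b_0 - c\tilde\varepsilon_0)^{+}$, and I take $\tilde a_1 \defeq \tilde a_0$, $\tilde b_1 \defeq \tilde b_0 - c\tilde\varepsilon_0$. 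In both scenarios $\tilde a_0 \le \tilde a_1 \le \tilde b_1 \le \tilde b_0$ and $\tilde b_1 - \tilde a_1 \le (1-c)\varepsilon$. Undoing the rescaling with $a_1 \defeq r\tilde a_1$ and $b_1 \defeq r\tilde b_1$ delivers the sandwich on $B_{r/40}(x_0)$ with oscillation at most $(1-c)\varepsilon r$, as claimed.

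The main obstacle, in my judgment, will be Step~2: ensuring that the hypothesis $|\sigma|<1/20$ of Lemma \ref{Harnack lemma 5.2} is always met, and quantifying the shrinkage it imposes on the conclusion ball. The naive radius produced by the Lemma is $1/2$; the factor $1/40$ in the statement must be precisely the buffer required to accommodate both a vertical shift (handling $x_0 \in \Omega^{+}(u)$) and the fact that Case~2 of Lemma \ref{Harnack lemma 5.2} invokes the classical Harnack inequality only on a ball of radius $1/40$. The remaining checks -- scaling invariance of the smallness bounds \eqref{t 5.2.b}--\eqref{t 5.2.c} and the propagation of the modulus $\omega$ in (A2) -- are routine.
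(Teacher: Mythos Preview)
Your overall strategy---rescale to $r=1$, then invoke the dichotomy of Lemma~\ref{Harnack lemma 5.2}---matches the paper's, and Steps~1 and~3 are correct. Step~2, however, has a genuine gap: the ``vertical shift'' you propose for the case $0\in\Omega^{+}(\tilde u)$ does not work as stated. If $\tilde a_0>1/20$, shifting the centre in the $e_n$ direction to force $|\sigma|<1/20$ would require the sandwich to hold on a ball centred at the shifted point, and that ball need not be contained in $B_1$; nor is there any reason the free boundary should lie near $0$ at all in this regime, so there is no free-boundary point to recentre at.

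The paper resolves this with a trichotomy on $\tilde a_0$ rather than a reduction to the free-boundary case. If $\tilde a_0<-1/20$ then $\tilde b_0<-1/20+\varepsilon<0$, so $\tilde u\equiv 0$ on a full neighbourhood of the origin, contradicting $0\in\Omega^{+}(\tilde u)\cup\mathfrak F(\tilde u)$. If $\tilde a_0>1/20$ then $B_{1/20}(0)\subset\Omega^{+}(\tilde u)$ and one applies the \emph{interior} Harnack inequality (Theorem~\ref{ThmHarIneq}) directly to the nonnegative function $\tilde u - y_n - \tilde a_0$ on $B_{1/20}$, obtaining the improvement on $B_{1/40}$; this interior step is the true source of the factor $1/40$, not the internal radii appearing in Case~2 of Lemma~\ref{Harnack lemma 5.2}. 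Only in the remaining case $|\tilde a_0|<1/20$ does one invoke Lemma~\ref{Harnack lemma 5.2}, and there your Step~3 applies verbatim (with the conclusion actually holding on the larger ball $B_{1/2}$).
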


\begin{proof}
With no loss of generality, we can assume  $x_{0}=0$ and $r=1$. Let us call $p\left( x \right) = x_n + a_{0}$ and notice that by (\ref{t 5.2.2})
\[
p^{+}\left( x \right) \leq u \left( x \right) \leq \left( p\left( x \right) + \varepsilon \right)^{+} \ \  \text{with } d_{0} = a_{0} + \varepsilon.
\]

Now, if $\left|a_0\right|<\frac{1}{20}$ we can apply the previous Lemma \ref{t 5.2.1} to get the desired result (either \eqref{l 5.2.2} or \eqref{l 5.2.4} must happen). The possibility $a_0<-\frac{1}{20}$ is a contradiction, since it implies that $0\in\{u=0\}$. Finally, if $a_0>\frac{1}{20}$ we are strictly contained in the positivity set of $u$ and the result follows by Harnack inequality.
\end{proof}

An immediate consequence of  Lemma \ref{t 5.2.1} is the following H\"older estimate:
\begin{corollary}
\label{c 5.1}
Let $u$ be a viscosity solution to (\ref{P 5.1. introduc}) in $\Omega$ under assumptions (\ref{t 5.2.b})--(\ref{t 5.2.d}). If $u$ satisfies (\ref{t 5.2.2}) then in $B_{1}\left( x_{0} \right)$ for $r=1$ then the function
\[
u_{\varepsilon}(x) \defeq \frac{u(x) - x_n}{\varepsilon}
\]
has a H\"older modulus of continuity at $x_{0}$ outside of ball of radius $\varepsilon / \tilde{\varepsilon}$; \textit{i.e.} for all $x \in \left( \Omega^{+}\left( u \right) \cup \mathfrak{F}\left( u \right) \right)\cap B_{1}\left( x_{0} \right)$ with $\vert x - x_{0} \vert \geq \varepsilon / \tilde{\varepsilon}$
\begin{eqnarray*}
\label{}
\vert \tilde{u}_{\varepsilon}\left( x \right) - \tilde{u}_{\varepsilon}\left( x_{0} \right)\vert \leq C \vert x - x_{0} \vert^{\gamma}.
\end{eqnarray*}
\end{corollary}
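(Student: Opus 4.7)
The corollary is a standard consequence of iterating the one-step Harnack improvement of Lemma~\ref{t 5.2.1} along a geometric sequence of dyadic scales centred at $x_0$, thereby transferring the flatness into a geometric oscillation decay for $u_\varepsilon \defeq (u-x_n)/\varepsilon$. Without loss of generality assume $x_0 = 0$ and $r = 1$; let $c \in (0,1)$ be the universal constant produced by Lemma~\ref{t 5.2.1}, and set $r_k \defeq 40^{-k}$.

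I will build by induction on $k$ two real sequences satisfying $a_{k-1} \le a_k \le b_k \le b_{k-1}$, together with the trapping
\[
(x_n + a_k)^+ \le u(x) \le (x_n + b_k)^+ \quad \text{in } B_{r_k}, \qquad b_k - a_k \le (1-c)^k\varepsilon.
\]
The base case is the hypothesis of the corollary. For the inductive step, I apply Lemma~\ref{t 5.2.1} at the scale $r_k$ with flatness parameter $\varepsilon_k \defeq (1-c)^k \varepsilon / r_k$; the smallness requirement $\varepsilon_k \le \tilde\varepsilon$ in (\ref{t 5.2.3}) reads $(1-c)^k\varepsilon \le \tilde\varepsilon \, r_k$ and, since $(1-c)^k \le 1$, is guaranteed as soon as $r_k \ge \varepsilon / \tilde\varepsilon$. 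Hence the iteration can be carried out for every $k \le k^\ast$, where $k^\ast$ is the largest index with $r_{k^\ast} \ge \varepsilon / \tilde\varepsilon$; this is precisely the mechanism behind the ``outside a ball of radius $\varepsilon/\tilde\varepsilon$'' clause in the statement.

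To conclude, fix $x \in (\Omega^+(u) \cup \mathfrak{F}(u)) \cap B_1$ with $|x| \ge \varepsilon/\tilde\varepsilon$ and pick $k \le k^\ast$ with $r_{k+1} < |x| \le r_k$. Both $x$ and $0$ belong to $B_{r_k} \cap (\Omega^+(u) \cup \mathfrak{F}(u))$; at any such point $y$ the positive-part trapping reduces to the clean two-sided bound $a_k \le u(y) - y_n \le b_k$, since $u(y) \ge 0$ gives $u(y) \ge y_n + a_k$ (trivially if $y_n + a_k < 0$, directly otherwise), while the upper bound at a free boundary point follows by continuity from points in $\Omega^+(u)$ where $(y_n + b_k)^+ > 0$. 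Subtracting the estimates at $y = x$ and $y = 0$ yields
\[
|u_\varepsilon(x) - u_\varepsilon(0)| \le \frac{b_k - a_k}{\varepsilon} \le (1-c)^k = r_k^{\gamma} \le 40^{\gamma}\,|x|^{\gamma},
\]
with Hölder exponent $\gamma \defeq \log(1/(1-c))/\log 40$, as required.

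\textbf{Main obstacle.} The iteration itself is routine; the genuinely delicate point is passing from the one-sided positive-part bounds $(y_n + a_k)^+ \le u(y) \le (y_n + b_k)^+$ to the symmetric linear estimate on $u - y_n$. This translation is what forces the hypothesis $y \in \Omega^+(u) \cup \mathfrak{F}(u)$ (at genuine ``dead'' points $\{u=0,\, y_n < -b_k\}$ the upper bound fails) and, combined with the stopping scale $r_{k^\ast} \sim \varepsilon/\tilde\varepsilon$ of the iteration, produces the quantitative form of the Hölder modulus stated in the corollary.
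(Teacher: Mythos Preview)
Your proof is correct and follows essentially the same route as the paper: iterate Lemma~\ref{t 5.2.1} along the geometric scales $r_k = 40^{-k}$, use the sufficient condition $r_k \ge \varepsilon/\tilde\varepsilon$ (which, via $(1-c)^k\le 1$, guarantees the smallness hypothesis $(b_k-a_k)/r_k\le\tilde\varepsilon$ at each step), and read off the H\"older exponent $\gamma = \log(1/(1-c))/\log 40$ from the resulting oscillation decay. Your handling of the passage from the positive-part trapping to the two-sided bound $a_k\le u(y)-y_n\le b_k$ on $\Omega^+(u)\cup\mathfrak F(u)$ is in fact more explicit than the paper's own sketch, which simply records the iteration and the stopping scale.
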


\begin{proof}
The proof holds by a standard iteration scheme. If $u$ satisfies \eqref{t 5.2.2} for $r=1$ then
\[
(x_n+a_1)^+\leq u(x)\leq (x_n+b_1)^+\text{ in } B_{\frac{r}{40}}(x_0)
\]
with
\[
b_1-a_1\leq (1-c)\varepsilon.
\]
Nevertheless, if
\[
(1-c)\varepsilon\leq 40^{-1}\tilde{\varepsilon}
\]
we can apply Lemma\ref{t 5.2.1} once more and get
\[
(x_n+a_2)^+\leq u(x)\leq (x_n+b_2)^+\text{ in } B_{\frac{r}{40^2}}(x_0)
\]
with
\[
b_2-a_2\leq (1-c)^2\varepsilon
\]
and we can repeat this argument as long as
\[
(1-c)^m\varepsilon\leq 40^{-m}\tilde{\varepsilon}.
\]
This means that the oscillation of $u_\varepsilon$ in $B_r(x_0)$ is smaller than $(1-c)^m=:40^{-\gamma m}$ as long as $r\geq \frac{\tilde{\varepsilon}}{\varepsilon}$ which yields the desired estimate.
\end{proof}

\section{Improvement of flatness scheme} \label{improvement}

In this section we prove an \textit{improvement of flatness} lemma, from which the proof of Theorem \ref{th 5.1. introd.} will follow via an iterative scheme.

\begin{lemma}[{\bf Improvement of flatness}]\label{lemma2N}
Let $u$ be a viscosity solution to (\ref{P 5.1. introduc}) in $\Omega$ under assumptions (\ref{t 5.2.b})--(\ref{t 5.2.d}) with $0 \in \mathfrak{F}(u)$ and assume it satisfies
\begin{equation}\label{(4.1N)}
    (x_n - \varepsilon)^{+} \le u(x) \le (x_n + \varepsilon)^{+} \quad \textrm{for}\,\, x \in B_1.
\end{equation}

Then there exists a constant $r_0(\verb"universal")>0$ such that if $0<r\leq r_0$ and $0 < \varepsilon \le \varepsilon_0$ (with $\varepsilon_0$ depending on $r$), then
\begin{equation}\label{(4.2)}
    \left(\langle x , \nu \rangle - r \frac{\varepsilon}{2}\right)^{+} \le u(x) \le \left(\langle x , \nu \rangle + r \frac{\varepsilon}{2}\right)^{+} \quad x \in B_r,
\end{equation}
for some $\nu\in\mathbb{S}^{n-1}$ (unity sphere) and $|\nu-e_n| \le C \varepsilon^2$ for a constant $C(\verb"universal")>0$.
\end{lemma}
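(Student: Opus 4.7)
\medskip

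\noindent\textbf{Proof plan.} The strategy follows the contradiction/compactness scheme pioneered in \cite{DeS11}, adapted to our doubly degenerate setting. I will argue by contradiction: suppose there exist sequences $\varepsilon_k\to 0$, operators $F_k$ and $\mathcal{H}_k$ satisfying the structural assumptions (A0)--(A2), \eqref{1.2}--\eqref{1.3}, right-hand sides $f_k$ with $\|f_k\|_\infty\le\varepsilon_k^2$, free boundary data $\mathrm{Q}_k$ with $\|\mathrm{Q}_k-1\|_\infty\le\varepsilon_k^2$, $\Theta_{F_k}\le\varepsilon_k^2$, and solutions $u_k$ with $0\in\mathfrak{F}(u_k)$ that are $\varepsilon_k$-flat in $B_1$ in the $e_n$ direction, but for which conclusion \eqref{(4.2)} fails for every unit vector $\nu$ close to $e_n$. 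Following \cite{DeS11}, I then introduce the normalized sequence
$$
 \tilde u_k(x)\;\defeq\;\frac{u_k(x)-x_n}{\varepsilon_k},\qquad x\in \big(\overline{\Omega^{+}(u_k)}\cup\mathfrak{F}(u_k)\big)\cap B_1,
$$
which by the flatness hypothesis satisfies $-1\le \tilde u_k\le 1$.

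\medskip

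The next step is compactness. By Corollary~\ref{c 5.1}, the Harnack-type estimate of Lemma~\ref{t 5.2.1} yields a uniform H\"older modulus of continuity for $\tilde u_k$ at every point of $\mathfrak{F}(u_k)\cap B_{1/2}$, outside a ball of radius $\varepsilon_k/\tilde\varepsilon$. Since $\mathfrak{F}(u_k)\to\{x_n=0\}$ in the Hausdorff sense (by the flatness \eqref{(4.1N)}) and $\varepsilon_k\to 0$, a standard Arzel\`a--Ascoli diagonal extraction produces a subsequence converging (in graph/Hausdorff sense) to a H\"older continuous limit $\tilde u_\infty:\overline{B^{+}_{1/2}}\to\mathbb{R}$, with $\tilde u_\infty(0)=0$ and $|\tilde u_\infty|\le 1$.

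\medskip

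The crucial identification step is to show that $\tilde u_\infty$ is a viscosity solution of the homogeneous linearized Neumann problem
$$
\begin{cases}
F_\infty(D^2\tilde u_\infty)=0 & \text{in } B^{+}_{1/2},\\[2pt]
\partial_{x_n}\tilde u_\infty=0 & \text{on } \Upsilon_{1/2},
\end{cases}
$$
for some uniformly elliptic limit operator $F_\infty$. Writing $u_k=x_n+\varepsilon_k\tilde u_k$, one has $\nabla u_k=e_n+\varepsilon_k\nabla\tilde u_k$ and $D^2u_k=\varepsilon_k D^2\tilde u_k$. Because $|\nabla u_k|\to 1$ uniformly on the relevant set, the degeneracy factor $\mathcal{H}_k(x,\nabla u_k)$ is bounded below by a positive universal constant, so the PDE may be rewritten as $F_k(x,\varepsilon_k D^2\tilde u_k)=f_k/\mathcal{H}_k(x,\nabla u_k)$, whose RHS is $O(\varepsilon_k)$. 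Rescaling via $\widetilde F_k(x,X)\defeq \varepsilon_k^{-1}F_k(x,\varepsilon_k X)$ preserves uniform ellipticity, and by the oscillation control $\Theta_{F_k}\le\varepsilon_k^2$ together with Arzel\`a--Ascoli for operators one extracts a further subsequence $\widetilde F_k\to F_\infty$ uniformly on compact sets. The free boundary condition $|\nabla u_k|=\mathrm{Q}_k$ on $\mathfrak{F}(u_k)$, expanded to first order in $\varepsilon_k$, forces in the limit $\partial_{x_n}\tilde u_\infty=0$ on $\Upsilon_{1/2}$; viscosity testing is carried out using strict comparison (Lemma~\ref{l 5.1}) plus the standard $(G1)$--$(G2)$ perturbation trick, exactly as in \cite[Lemma 4.3]{DeS11}.

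\medskip

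Once this is accomplished, Lemma~\ref{reg_Finfty} applies to $\tilde u_\infty$ and yields
$$
 \big|\tilde u_\infty(x)-\nabla'\tilde u_\infty(0)\cdot x'\big|\le C_0\,r^{1+\alpha}\qquad\text{in } B^{+}_r,\ r\le\tfrac12,
$$
with $\partial_{x_n}\tilde u_\infty(0)=0$ and $|\nabla'\tilde u_\infty(0)|\le C_0$. Choosing $r=r_0$ small so that $C_0 r_0^{\alpha}\le 1/4$, and then $k$ so large that $\|\tilde u_k-\tilde u_\infty\|_{L^\infty(B^{+}_{r_0})}\le r_0/4$, this translates, after undoing the rescaling $u_k=x_n+\varepsilon_k\tilde u_k$, into
$$
 \big(\langle x,\nu_k\rangle-\tfrac{r_0\varepsilon_k}{2}\big)^{+}\le u_k(x)\le \big(\langle x,\nu_k\rangle+\tfrac{r_0\varepsilon_k}{2}\big)^{+}\quad \text{in } B_{r_0},
$$
with $\nu_k=(\varepsilon_k\nabla'\tilde u_\infty(0),1)/\big\|(\varepsilon_k\nabla'\tilde u_\infty(0),1)\big\|$ satisfying $|\nu_k-e_n|\le C\varepsilon_k^2$, contradicting the assumption that \eqref{(4.2)} fails for $u_k$.

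\medskip

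The main obstacle is the identification step: justifying that $\tilde u_\infty$ inherits both the interior equation and, above all, the linearized Neumann condition on the flat portion $\Upsilon_{1/2}$. The doubly degenerate structure of $\mathcal{H}$ is precisely what makes this delicate; the saving feature is that in the flat regime $|\nabla u_k|$ stays close to $1$, so $\mathcal{H}_k$ is non-degenerate along the sequence, and the assumption \eqref{Cont-H} provides enough continuity to pass to the limit. All the tools needed to execute this passage---uniform H\"older compactness (Corollary~\ref{c 5.1}), strict comparison (Lemma~\ref{l 5.1}), boundary regularity for the limit (Lemma~\ref{reg_Finfty}), and ABP control (Theorem~\ref{ABPthm})---have been recorded in the earlier sections, so the argument, though technical, reduces to carefully verifying each of them in our non-homogeneously degenerate environment.
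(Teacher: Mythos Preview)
Your proposal is correct and follows essentially the same three-step compactness argument as the paper: contradiction with normalized sequence $\tilde u_k=(u_k-x_n)/\varepsilon_k$, H\"older compactness via Corollary~\ref{c 5.1}, identification of the limit as a viscosity solution of the constant-coefficient Neumann problem (exploiting that $|\nabla u_k|\approx 1$ so $\mathcal{H}_k$ is uniformly non-degenerate), and then Lemma~\ref{reg_Finfty} to derive the contradiction. The only cosmetic difference is that the paper organizes the viscosity verification of the limiting boundary condition via touching polynomials $\tilde P(x)=\varepsilon_k(P(x)+c_k)+x_n$ rather than phrasing it as ``linearizing $|\nabla u_k|=\mathrm{Q}_k$'', but the underlying computation is identical.
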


\begin{proof}
We will split the proof into three steps. From now on, we will use the following notation:
$$
    \Omega_{\rho}(u) \defeq (B^{+}_{1}(u) \cup \mathfrak{F}(u)) \cap B_{\rho}.
$$

{\bf Step 1 - Compactness Lemma:}\, Fix $r \le r_0$ with $r_0(\verb"universal")$ ($r_0$ will be chosen in Step 3). Assume, for the sake of contradiction that we can find sequences $\varepsilon_{k} \to 0$ and  $\{u_{k}\}_{k \ge 1} \subset C(\Omega)$ viscosity solutions to
\begin{equation}\label{seq}
	\left \{
		\begin{array}{rclcl}
			\mathcal{H}(x, \nabla u_k) F_k(x, D^2 u_k) & = & f_k(x) & \text{ in } & \Omega_{+}\left( u_k \right),\\
			|\nabla u_k| & = & \mathrm{Q}_k(x) & \text{ on } & \mathfrak{F}(u_k),
		\end{array}
	\right.
\end{equation}
with
\[
\max\left\{\Vert  f_k \Vert_{L^{\infty}\left( \Omega \right)},\,\, \Vert \mathrm{Q}_k - 1 \Vert_{L^{\infty}\left(\Omega \right)}, \,\,\Theta_{\mathrm{F_k}}(x)\right\} \leq \varepsilon^{2}_k
\]
\begin{equation}\label{(4.3)}
    (x_n - \varepsilon_{k})^{+} \le u_{k}(x) \le (x_n + \varepsilon_{k})^{+} \quad \textrm{for} \,\, x \in B_{1}
\end{equation}
but it does not satisfy the conclusion \eqref{(4.2)} of Lemma.

Let $v_{k} : \Omega_{1}(u_k) \rightarrow \mathds{R}$ defined by
$$
    v_{k}(x) \defeq \frac{u_{k}(x)-x_n}{\varepsilon_{k}}.
$$
Then \eqref{(4.3)} gives,
\begin{equation}\label{(4.4)}
    -1 \le v_{k}(x) \le 1 \quad \textrm{for} \,\, x \in \Omega_{1}(u_k).
\end{equation}
From Corollary \ref{c 5.1}, it follows that the function $v_k$ satisfies
\begin{equation}\label{(4.5)}
    |v_{k}(x)-v_{k}(y)| \le \mathrm{C} |x-y|^{\gamma},
\end{equation}
for $\mathrm{C}(\verb"universal")$ and
$$
    |x-y| \ge \varepsilon_{k} / \bar \varepsilon, \,\,\, x,y \in \Omega_{1/2}(u_k).
$$
From \eqref{(4.3)} it follows that $\mathfrak{F}(u_k) \to B_1 \cap \{x_n=0\}$ in the Hausdorff distance (see Definition \ref{Def-Hausdorff-Dist}). This fact and \eqref{(4.5)} together with Arzel\`{a}-Ascoli give that as $\varepsilon_k \to 0$ the graphs of the $v_k$ over $\Omega_{1/2}(u_k)$ converge (up to a subsequence) in the Hausdorff distance to the graph of a H\"older continuous function $u_{\infty}$ over $B_{1/2} \cap \{x_n \ge 0\}$. \\

{\bf Step 2 - Limiting PDE:} We claim that $u_\infty$ is a solution of the problem
\begin{eqnarray}
\label{limiting theor}
\left \{
\begin{array}{rclcl}
\mathrm{F}_\infty(D^2u_{\infty}) & = 0 & \mbox{in} & B_{\frac{1}{2}}\cap \{x_n > 0\} \\
\frac{\partial u_{\infty}}{\partial x_n} & = 1 & \mbox{on} & B_{\frac{1}{2}} \cap \{x_n = 0\} \\
\end{array}
\right.
\end{eqnarray}
in viscosity sense, for some $\mathrm{F}_\infty$ uniformly elliptic with constant coefficients.

Notice that a stability argument and the uniform modulus of continuity of the operators $F_k$ gives the existence of such $\mathrm{F}_\infty$ and uniform convergence of $F_k$ (see for instance \cite[Lemma 2.2]{daSV21})
and the smallness assumption on $\Theta_{F_k}$ gives zero oscillation (for the coefficients) in the limit profile.

Now let $P \left( x \right)$ be a quadratic polynomial touching $u_\infty$ at $x_{0} \in B_{\frac{1}{2}}  \cap \left\lbrace  x_n \geq 0 \right\rbrace$ strictly by below. As shown in \cite{DeS11}, there exist points $x_k \in \Omega_{\frac{1}{2}}\left( u_k\right)$, $x_k \rightarrow x_{0}$, and constants $c_k \rightarrow 0$ such that
\begin{eqnarray*}
u_k \left( x_k\right) = \tilde{P}\left( x_k \right)
\end{eqnarray*}
and
\begin{eqnarray*}
u_k\left( x\right)\geq \tilde{P}\left( x \right) \ \ \mbox{in a neighborhood of} \ x_k
\end{eqnarray*}
where
\begin{eqnarray*}
\tilde{P}\left( x \right) = \varepsilon_{j}\left( P\left( x \right) + c_k  \right) + x_n.
\end{eqnarray*}

We need to prove that
\begin{enumerate}
	\item[(i)] If $x_{0} \in \ B_{\frac{1}{2}} \cap \{x_n > 0 \}$ then $\mathrm{F}_\infty(D^2 P) \leq 0$;
	\item[(ii)] If $x_{0} \in \ B_{\frac{1}{2}} \cap \left\lbrace  x_n = 0 \right\rbrace$ then $\frac{\partial P}{\partial x_n}\left( x_{0} \right) \leq 0$.
\end{enumerate}

\noindent (i) If $x_{0} \in B_{\frac{1}{2}} \cap \left\lbrace  x_n > 0 \right\rbrace$ then, since $P$ touches $u_k$ by below at $x_k$, we estimate
\[
\varepsilon_k^{2} \geq  f_k\left( x_k\right) \geq  \mathcal{H}(x_k, \nabla \tilde{P}) F_k(x_k, D^2 \tilde{P}).
\]

Now notice that
\[
\nabla \tilde{P} = \varepsilon_k \nabla P + e_n
\]
so that
\[
\mathcal{H}(x_k, \nabla \tilde{P})\geq \mathrm{c}
\]
for $k$ large enough ($\mathrm{c}\in (0,1)$) so we may take limit in
\[
\frac{\varepsilon_k^{2}}{\mathcal{H}(x, \nabla \tilde{P})} \geq  F_k(x_k, D^2 \tilde{P})
\]
to get
\[
0 \geq  \mathrm{F}_\infty(D^2 \tilde{P}).
\]

\noindent (ii)  If $x_0\in \ B_{\frac{1}{2}} \cap \left\lbrace  x_n = 0 \right\rbrace$ we can assume that
\begin{eqnarray}
\label{strict subharmonic}
\mathrm{F}_\infty(D^2P) > 0
\end{eqnarray}
Notice that for $k$ sufficiently large we have $x_k\in \mathfrak{F}\left( u_k\right)$. In fact, suppose by contradiction that there exists a subsequence $x_{k_j} \in B^{+}_{1}\left( u_{k_j}\right)$ such that $x_{k_j} \rightarrow x_{0}$. Then arguing as in (i) we obtain
\begin{eqnarray*}
F_k(x_{k_j},D^2P) \leq  C\varepsilon_k
\end{eqnarray*}
whose limit implies
\begin{eqnarray*}
\mathrm{F}_\infty(D^2P) \leq 0,
\end{eqnarray*}
which contradicts (\ref{strict subharmonic}).
Therefore, there exists $k_{0} \in \mathbb{N}$ such that $x_k \in \mathfrak{F}\left( u_k\right)$ for $k \geq k_{0}$.

Moreover, as in the previous step
\begin{eqnarray*}
\vert \nabla \tilde{P} \vert \geq \frac{1}{2},
\end{eqnarray*}
for $k$ sufficiently large. Since that $\tilde{P}^{+}$ touches $u_k$ by below we have
\begin{eqnarray*}
\vert \nabla \tilde{P} \vert ^{2} \leq \mathrm{Q}_k\left( x_k\right) \leq \left(1 + \varepsilon^{2}_k \right)   .
\end{eqnarray*}
Then, we obtain
\begin{eqnarray*}
\vert \nabla \tilde{P} \vert ^{2} \leq \left(1 + \varepsilon^{2}_k \right).
\end{eqnarray*}
Moreover,
\begin{eqnarray*}
\vert \nabla \tilde{P} \vert ^{2} & = &  \varepsilon^{2}_k\vert \nabla P \left( x_k\right) \vert^{2} + 1 + 2\varepsilon_k \frac{\partial P}{\partial x_n} \left( x_k\right).
\end{eqnarray*}
Putting the last to equations together and dividing by $\varepsilon_k$ we get
\begin{eqnarray}
\label{t 5.3.4}
\varepsilon_k\vert \nabla P \left( x_k\right) \vert^{2} + 2 \frac{\partial P}{\partial x_n} \left( x_k\right) \leq \varepsilon_k
\end{eqnarray}
and taking $j \rightarrow \infty$ we conclude that $\frac{\partial P}{\partial x_n} \left( x_0\right) \leq 0$.   \\

{\bf Step 3 - Improvement of flatness:}\, So far we have that $u_{\infty}$ solves \eqref{limiting theor} and from \eqref{(4.4)} it satisfies
$$
    -1 \le u_{\infty} \le 1 \quad \textrm{in} \,\, B_{1/2} \cap \{x_n \ge 0\}.
$$
From Lemma \ref{reg_Finfty} and the bound above we obtain that, for the given $r$,
$$
|u_{\infty}(x) - u_{\infty}(0) - \langle \nabla u_{\infty}(0) , x \rangle| \le C_0 r^{1+\alpha} \quad \textrm{in} \,\, B_r \cap \{x_n \ge 0\},
$$
for a constant $C_0(\verb"universal")$. In particular, since $0 \in \mathfrak{F}(u_{\infty})$ and $\partial_n u_{\infty}(0)=0$, we estimate
$$
   \langle \tilde{x} , \tilde{\nu} \rangle - C_1 r^{1+\alpha} \le u_{\infty}(x) \le \langle \tilde{x} , \tilde{\nu}\rangle + C_0 r^{1+\alpha} \quad \textrm{in} \,\, B_r \cap \{x_n \ge 0\},
$$
where $\tilde{\nu}_{i} = \langle \nabla u_{\infty}(0) , e_i \rangle$ $(i=1,\ldots, n-1)$, $|\tilde{\nu}| \le \tilde{C}$ and $\tilde{C}(\verb"universal")$. Therefore, for $k$ large enough we get,
$$
    \langle \tilde{x} , \tilde{\nu} \rangle - C_1 r^{1+\alpha} \le v_{k}(x) \le \langle \tilde{x} , \tilde{\nu} \rangle + C_1 r^{1+\alpha} \quad \textrm{in} \,\, \Omega_{r}(u_k).
$$
From the definition of $v_{k}$ the inequality above reads
\begin{equation}\label{(4.9)}
\varepsilon_k\langle \tilde{x} , \tilde{\nu} \rangle +x_n- C_1 \varepsilon_kr^{1+\alpha} \le v_{k}(x) \le \langle \tilde{x} , \tilde{\nu} \rangle +x_n+ \varepsilon_kC_1 r^{1+\alpha} \quad \textrm{in} \,\, \Omega_{r}(u_k).
\end{equation}

Now, let us define
$$
    \nu \defeq \frac{1}{\sqrt{1+ \varepsilon^{2}_{k}}}(\varepsilon_{k} \tilde{\nu}, 1).
$$
Since, for $k$ large,
$$
 1 \le \sqrt{1+\varepsilon^{2}_{k}} \le 1+ \frac{\varepsilon^{2}_{k}}{2},
$$
we conclude from \eqref{(4.9)} that
$$
\langle x , \nu \rangle - \frac{\varepsilon^{2}_{k}}{2} r - C_1 r^{1+\alpha} \varepsilon_{k} \le u_{k} \le \langle x , \nu \rangle + \frac{\varepsilon^{2}_{k}}{2} r + C_1 r^{1+\alpha} \varepsilon_k \qquad \textrm{in}\qquad, \Omega_{r}(u_k).
$$
In particular, if $r_0$ is such that $C_1r_0^\alpha \le \frac{1}{4}$ and also $k$ is large enough so that $\varepsilon_{k} \le \frac{1}{2}$ we find
$$
    \langle x , \nu \rangle - \frac{\varepsilon_{k}}{2} r \le u_k \le \langle x , \nu \rangle + \frac{\varepsilon_{k}}{2} r \qquad \textrm{in} \qquad \Omega_{r}(u_k),
$$
which together with \eqref{(4.3)} implies that
$$
    \left(\langle x , \nu \rangle - \frac{\varepsilon_{k}}{2} r\right)^{+} \le u_k \le  \left(\langle x , \nu \rangle + \frac{\varepsilon_{k}}{2} r\right)^{+} \quad \textrm{in}\,\,\, B_r.
$$
Finally, such a $u_k$ satisfies the conclusion of Lemma, thereby yielding a contradiction.

\end{proof}


\section{Regularity of the free boundary}\label{reg_fron_livre}

In this section we will present the proof of Theorem \ref{th 5.1. introd.}. Subsequently, via a blow-up argument performed in such a result we will deliver the proof of Theorem \ref{Holder1}. The proof of Theorem \ref{th 5.1. introd.} is based on an improvement of flatness coming from Harnack type estimates and it follows closely the ideas of \cite{DeS11}.



\begin{proof}[{\bf Proof of Theorem \ref{th 5.1. introd.}}]

The idea of proof is to iterate the Lemma \ref{lemma2N} in an appropriate geometric scaling. For that end, let $u$ be a viscosity solution to the free boundary problem
\begin{eqnarray*}
	\left \{
		\begin{array}{rclcl}
			\mathcal{H}(x, \nabla u) F (x,  D^2 u) &   = & f(x) & \text{ in } & B^+_1(u),\\
			|\nabla u| & = & \mathrm{Q}(x) & \text{ on } & \mathfrak{F}(u)
		\end{array}
	\right.
\end{eqnarray*}
with $0 \in \mathfrak{F}\left( u \right)$ and $\mathrm{Q}\left(0 \right)= 1$. Now, assume further that
\[
\left(x_{n} - \tilde{\varepsilon}\right)^{+} \leq u \left( x \right) \leq \left( x_{n} + \tilde{\varepsilon} \right)^{+} \ \ \mbox{for} \ \ x \in B_{1},
\]
and
\[
\max\left\{\Vert f \Vert_{L^{\infty}\left( B_{1} \right)}, \ \ \left[ \mathrm{Q} \right]_{C^{0,\alpha}\left( B_{1}\right)}, \|F\|_{C^\omega(B_1)}\right\} \leq \tilde{\varepsilon},
\]
with $\tilde{\varepsilon}>0$ to be fixed soon.

Let us start by fixing $\bar r(\verb"universal") >0$ to be a constant such that
\begin{equation} \label{smallRN}
	\overline{r} \le \min\left\{r_0,\,\,\left(\frac{1}{4}\right)^{\frac{1}{\alpha}}\right\},
\end{equation}
with $r_0(\verb"universal")$ the constant in Lemma \ref{lemma2N}. After chosen $\overline{r}$, let $\varepsilon_{0} \defeq \varepsilon_{0}(\overline{r})$ be the constant given by Lemma \ref{lemma2N}.

Now, let
\begin{equation}\label{choiceN}
	\tilde{\varepsilon} \defeq \varepsilon^{2}_{0} \quad \textrm{and} \quad \varepsilon_{k} \defeq 2^{-k} \varepsilon_0.
\end{equation}
Notice that our choice of $\tilde{\varepsilon}$ ensures that
\[
	\left(x_n -  \varepsilon_0\right)^{+} \le u(x) \le \left(x_n +  \varepsilon_0\right)^{+} \quad \textrm{in} \quad B_1.
\]
Thus by Lemma \ref{lemma2N} there exists $\nu_1$ with $|\nu_{1}|=1$ and $|\nu_{1}-e_n| \le \mathrm{C}{ \varepsilon_0}^{2}$ such that
$$
	\left(\langle x , \nu_{1} \rangle - \bar r \frac{\varepsilon_0}{2}\right)^{+} \le u(x) \le \left(\langle x , \nu_{1}\rangle + \bar r \frac{\varepsilon_0}{2}\right)^{+} \quad \textrm{in} \quad B_{\bar r}.
$$

{\bf Smallness regime:}  Consider the sequence of re-scaling profiles $u_{k}: B_1 \rightarrow \mathbb{R}$ given by
$$
    u_{k}(x) \defeq \frac{u(\lambda_{k}x)}{\lambda_{k}}
$$
with $\lambda_{k} = \overline{r}^{k}$, $k=0,1,2,\ldots$, for a fixed $\overline{r}$  as in \eqref{smallRN}.
Then, we observe that $u_{k}$ fulfils in the viscosity sense the following free boundary problem
\begin{eqnarray}
 \label{}
	\left \{
		\begin{array}{rclcl}
			\mathcal{H}(\lambda_k x, \nabla u_k) F_k( x,  D^2 u_k) & = & f_{k}(x) & \text{ in } & B^+_1(u_{k}),\\
			|\nabla u_{k}| & = & \mathrm{Q}_{k} & \text{ on } & \mathfrak{F}(u_{k}),
		\end{array}
	\right.
\end{eqnarray}
where
$$
\left\{
\begin{array}{rcl}
  F_{x}(x, \mathrm{X}) & \defeq & \lambda_k F\left(\lambda_k x, \lambda^{-1}_k\mathrm{X}\right) \\
  \mathcal{H}_{k}(x, \xi) & \defeq &  \mathcal{H}\left(\lambda_kx, \xi\right)\\
  \mathfrak{a}_{k}(x) & \defeq & \mathfrak{a}(\lambda_k x)\\
  f_{k}(x) & \defeq & \lambda_kf(\lambda_k x)\\
  \mathrm{Q}_{k}(x)&\defeq& \mathrm{Q}(\lambda_k x).
\end{array}
\right.
$$
Furthermore, $F_{k}, \mathcal{H}_{k}$ and $\mathfrak{a}_{k}$ fulfil the structural assumptions (A0)-(A2), \eqref{1.2} and \eqref{1.3}.

Now, we also claim that for the choices made in \eqref{choiceN} the assumptions (\ref{t 5.2.b})--(\ref{t 5.2.d}) hold true. Indeed, in $B_1$ we have
\begin{eqnarray*}
  |f_{k}(x)| &\le& \lambda_k\|f\|_{L^{\infty}(B_1)}  \le \tilde{\varepsilon} {\bar r}^k  = \varepsilon_k^{2} (4{\bar r})^k \leq  \varepsilon_{k}^{2},\\
|Q_k(x) -1| &=& |Q(\lambda_k x) - Q_k(0)| \le [Q]_{C^\alpha(B_1)} \lambda^{\alpha}_{k}  \le \tilde{\varepsilon} {\bar r}^ {k \alpha}  \le (\varepsilon_0 2^{-k})^{2} = \varepsilon_{k}^{2}\\
\Theta_{F_k}(x) &\le& \varepsilon_{k}^{2}.
\end{eqnarray*}

Therefore, we can iterate the above argument and obtain that
\[
    ( \langle x , \nu_{k} \rangle -\varepsilon_{k})^{+} \le u_{k}(x) \le (\langle x , \nu_{k} \rangle + \varepsilon_{k})^{+} \quad \textrm{in} \quad B_1,
\]
with $|\nu_{k}|=1$, $|\nu_{k}-\nu_{k+1}| \le C \varepsilon_{k}$ (with $\nu_{0}=e_n$), with $\mathrm{C}(\verb"universal")$. Thus, we have
\begin{equation} \label{int2}
	\left( \langle x , \nu_k \rangle - \frac{\varepsilon_0}{2^k} \overline{r}^{k}\right)^{+} \le u(x) \le \left(\langle  x , \nu_k \rangle  + \frac{\varepsilon_{0}}{2^{k}} \overline{r}^{k}\right)^{+} \quad \textrm{in} \quad B_{\overline{r}^{k}}
\end{equation}
with
\begin{equation} \label{int3}
	|\nu_{k+1} - \nu_{k}| \le \mathrm{C} \frac{\varepsilon_0}{2^k}.
\end{equation}
Furthermore, \eqref{int2} implies that
\begin{equation} \label{int5}
	\partial \{u>0\} \cap B_{\overline{r}^k} \subset \left\{|\langle x, \nu_k \rangle| \le \frac{\varepsilon_0}{2^k} \overline{r}^{k}\right\},
\end{equation}
which implies that $B_{3/4} \cap \mathfrak{F}(u) $ is a $C^{1,\beta}$ graph. In fact, by \eqref{int3} we have that $\{\nu_k\}_{k \ge 1}$ is a Cauchy sequence. Hence, there exists the limit
$$
	\nu_\infty \defeq \lim_{k \to \infty} \nu_k
$$
In addition, from \eqref{int3} we conclude
$$
	|\nu_k - \nu_\infty| \le \mathrm{C} \frac{\varepsilon_0}{2^k} \quad \text{for}\,\,\, k\,\,\,\text{large enough}.
$$

Now, from \eqref{int5} we have
\[
	|\langle x , \nu_{k}\rangle| \le \frac{\varepsilon_0}{2^k} \overline{r}^k.
\]
Next, fix $x \in B_{3/4} \cap \partial \{u >0\}$ and choose $k$ an integer such that
$$
	\overline{r}^{k+1} \le |x| \le \overline{r}^k.
$$
Then,
\begin{eqnarray*}
	|\langle x , \nu_\infty \rangle| &\le& |\langle x, \nu_\infty- \nu_k \rangle| + |\langle x , \nu_k\rangle| \\
	&\le& |\nu(0) - \nu_k| |x| + \frac{\varepsilon_0}{2^k} \overline{r}^k\\
	&\le& C\frac{\varepsilon_0}{2^k} |x| + \frac{\varepsilon_0}{2^k} \overline{r}^k\\
	&\le& C\frac{\varepsilon_0}{2^k} (|x| + \overline{r}^k) \\
	&=& C \frac{\varepsilon_0}{2^k} \left(|x| + \frac{\overline{r}^{k+1}}{\overline{r}}\right)\\
	&\le& C \frac{\varepsilon_0}{2^k} \left(1 + \frac{1}{\overline{r}}\right) |x|.
	\end{eqnarray*}
	From the choice of $k$, we have  $|x| \ge \overline{r}^{k+1}$. Hence, if we define $0 < \beta <1$ such that
	$$
		\frac{1}{2}^{} = \overline{r}^{\beta} \quad \Leftrightarrow \quad \beta \defeq \frac{\ln(2)}{\ln(\overline{r}^{-1})},
	$$
we have
	\begin{eqnarray*}
		|\langle x , \nu_\infty\rangle| &\le& C \left(\frac{1}{2}\right)^k \left(1 + \overline{r}^{-1}\right) |x| \\
		&=& C \left(\frac{1}{2}\right)^{k+1} \left(1 + \overline{r}^{-1}\right) 2 |x| \\
		&\le& C (1+\overline{r}^{-1}) \varepsilon_0 |x|^{1+\beta}\\
        &\le& C \varepsilon_0 |x|^{1+\beta}.
	\end{eqnarray*}
Finally, we obtain
$$
	\partial\{u >0\} \cap B_{\overline{r}^{k}} \subset \left\{ \langle x, \nu_\infty \rangle \le C \varepsilon_0 \overline{r}^{k(1+ \beta)}\right\},
$$
which implies that $\partial \{u>0\}$ is a differentiable surface at $0$ with normal $\nu_\infty$. By applying this argument at all points in $\partial\{u>0\} \cap B_{3/4}$ we see that $\partial \{u>0\} \cap B_{3/4}$ is  a $C^{1,\beta}$ surface.
\end{proof}

\begin{remark}
By appropriately modifying the proof above, the same result could be proved if the free boundary condition is assumed to satisfy the more general continuity assumption
\[
|\mathrm{Q}(x)-\mathrm{Q}(y)|\leq \omega^{\prime}(|x-y|)
\]
for some modulus of continuity which is not necessarily a power.
\end{remark}


The next point to address is the proof of Theorem \ref{Holder1}. We present first two preliminary Lemmas. The first lemma is standard and follows essentially by a similar result as in \cite{DeS11}.

\begin{lemma}[{\bf Compactness}] \label{compactness}
Let $u_k$ be a sequence of (Lipschitz) viscosity solutions to
$$
\left\{
\begin{array}{rclcl}
\mathcal{H}(x, \nabla u_k) F_k(x,D^2 u_k)&=& f_k(x) & \mbox{in} & \Omega^{+}(u_k) ,\\
|\nabla u_k | &=&\mathrm{Q}_k(x) & \mbox{on} & \mathfrak{F}(u_k).
\end{array}
\right.
$$
Assume further that
\begin{enumerate}
\item[(i)]
$
    u_{k} \to u_{\infty} \quad \text{uniformly on compacts};
$
\item[(ii)] $F_k \to \mathrm{F}_{\infty}$ locally uniformly on $\text{Sym}(n) \times \mathbb{R}^n$;
\item[(iii)]
$
    \partial \{u_{k} >0\} \to \partial \{u_{\infty} >0\} \quad \textrm{locally in the Hausdorff distance;}
$
\item[(iv)] $\max\left\{\|f_k\|_{L^{\infty}}, \,\,\|Q_k-1\|_{L^{\infty}}, \,\,\Theta_{F_k}\right\}= o(1)$, as $k \to \infty$.
\end{enumerate}
Then, $u_{\infty}$ be a viscosity solution of
$$
\left\{
\begin{array}{rclcl}
\mathrm{F}_{\infty}(D^2 u_{\infty} ) &=& 0&  \mbox{in} & \Omega^{+}(u_{\infty}) ,\\
\frac{\partial u_{\infty}}{\partial x_n} &=& 1 & \mbox{on} & \mathfrak{F}(u_{\infty}),
\end{array}
\right.
$$
\end{lemma}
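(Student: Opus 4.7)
The plan is to carry out the standard viscosity-stability argument: we test $u_\infty$ with a smooth function that touches it strictly from one side, lift the contact to the approximating solutions $u_k$ via the uniform convergence (i), decide whether the contact point sits in $\Omega^+(u_k)$ or on $\mathfrak{F}(u_k)$ using the Hausdorff convergence (iii), apply the viscosity (sub/super)solution property of $u_k$, and pass to the limit using (ii) and the smallness in (iv). The structure is essentially the one used in Step 2 of the proof of Lemma \ref{lemma2N}, but now for a general limit $u_\infty$ rather than a specific blow-up limit.

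For the interior equation, let $P$ be a quadratic polynomial touching $u_\infty$ strictly from below at a point $x_0 \in \Omega^+(u_\infty)$. By (i) and a classical contact argument there exist $x_k \to x_0$ and constants $c_k \to 0$ such that $P + c_k$ touches $u_k$ from below at $x_k$. Since $u_\infty(x_0) > 0$, (iii) implies $x_k \in \Omega^+(u_k)$ for all $k$ large, so the viscosity subsolution property yields
\[
\mathcal{H}(x_k, \nabla P(x_k))\, F_k(x_k, D^2 P(x_k)) \le f_k(x_k).
\]
If $\nabla P(x_0) \neq 0$, then by \eqref{1.2} and \eqref{1.3} the factor $\mathcal{H}(x_k, \nabla P(x_k))$ is bounded below by a positive constant for $k$ large, so one may divide and pass to the limit using (ii) and $\|f_k\|_\infty = o(1)$ to obtain $F_\infty(D^2 P(x_0)) \le 0$. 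The case $\nabla P(x_0) = 0$ is handled by a standard perturbation: replace $P$ by $P_\delta(x) = P(x) + \delta \langle e, x - x_0 \rangle$ for a unit vector $e$, reducing to the non-degenerate case and letting $\delta \to 0$. The supersolution inequality is obtained symmetrically.

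For the free boundary condition, suppose $P \in C^2$ touches $u_\infty$ strictly from below at $x_0 \in \mathfrak{F}(u_\infty)$ with $\nabla P(x_0) \neq 0$; as in \cite{DeS11} it is enough to consider $P$ with $F_\infty(D^2 P(x_0)) > 0$. By (i) there are contact points $x_k \to x_0$ between $P + c_k$ and $u_k$. I claim $x_k \in \mathfrak{F}(u_k)$ for $k$ large: otherwise, along a subsequence $x_k \in \Omega^+(u_k)$ and the interior subsolution step just carried out would yield $F_\infty(D^2 P(x_0)) \le 0$, contradicting our choice of $P$. At $x_k \in \mathfrak{F}(u_k)$ the viscosity free-boundary condition gives $|\nabla P(x_k)| \le Q_k(x_k)$, and taking $k \to \infty$ with $Q_k \to 1$ from (iv) gives $|\nabla P(x_0)| \le 1$; in the geometry at hand, where the FB of $u_\infty$ is flat with normal $e_n$ and $u_\infty$ is supported in $\{x_n \ge 0\}$, this reads as $\partial_n P(x_0) \le 1$. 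The opposite inequality for supersolutions follows by the same scheme, interchanging the roles of sub- and supersolutions and using test functions with $F_\infty(D^2 P) < 0$.

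The main obstacle is the free-boundary step: one must rule out that the contact points $x_k$ sit on the positive side of $u_k$, which is where the hypothesis $F_\infty(D^2 P(x_0)) > 0$ is crucial and where one relies on the uniform convergence (ii) and on the non-degeneracy $\mathcal{H} \ge c > 0$ near $x_0$ (available because $\nabla P(x_0) \neq 0$). A secondary issue is that the limiting Neumann condition is expressed in the $e_n$-direction rather than as $|\nabla u_\infty| = 1$; this is not a loss of generality but reflects the setting in which this lemma will be invoked (blow-ups at flat free-boundary points, cf.\ the proof of Theorem \ref{Holder1}), where the limit free boundary is a hyperplane with normal $e_n$ and $u_\infty \equiv 0$ on one side.
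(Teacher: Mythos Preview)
Your approach is essentially the paper's: the paper itself says the proof ``is rather similar to the one of Step 2 in Lemma \ref{lemma2N}'' and skips the details, and you have reproduced precisely that stability argument (lift the strict contact to $u_k$, decide interior vs.\ free boundary using the sign of $F_\infty(D^2P)$, apply the corresponding viscosity inequality, pass to the limit). Two small remarks: when $P+c_k$ touches $u_k$ from \emph{below} you are invoking the \emph{supersolution} side of Definition \ref{d 5.3}, not the subsolution side (your inequality is correct, only the label is swapped); and your observation in the last paragraph is exactly right---the free-boundary step as written yields $|\nabla P(x_0)|\le 1$, i.e.\ the viscosity condition for $|\nabla u_\infty|=1$, which is what is actually used in the proof of Theorem \ref{Holder1}, while the Neumann form $\partial_{x_n}u_\infty=1$ stated in the lemma only makes literal sense once one already knows $\mathfrak{F}(u_\infty)$ is the hyperplane $\{x_n=0\}$.
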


\begin{proof}
The proof is rather similar to the one of Step 2 in Lemma \ref{lemma2N} so we skip it.
\end{proof}

Finally, we will use the following Liouville type result for  global viscosity solutions to a one-phase homogeneous free boundary problem; the result is more general and is proved in \cite{DFS15}, but we restate it here in way suitable for our context:
\begin{lemma}\label{Liouville}
Let $v: \mathbb{R}^{n} \rightarrow \mathbb{R}$ be a non-negative viscosity solution to
$$
\left\{
\begin{array}{rcrcl}
 \mathrm{F}(D^2 v) &=& 0 &\mbox{in} & \{v>0\}\\
\frac{\partial v}{\partial \nu} & = & 1 & \mbox{on} & \mathfrak{F}(v)
\end{array}
\right.
$$
 Assume further that $\mathfrak{F}(v) = \left\{x_n =g(x^{\prime}) \,\,\,\text{for}\,\,\, x^{\prime} \in \mathbb{R}^{n-1}\right\}$ with $\mathrm{Lip}(g) \le \mathrm{M}$. Then, $g$ is linear function and
$$
    v(x)= (x\cdot e)^{+},
$$
for some unit vector $e$, i.e. $v$ is a one dimensional linear profile.
\end{lemma}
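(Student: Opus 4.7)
The plan is to leverage the Lipschitz graph hypothesis to deduce monotonicity of $v$ in a whole open cone of directions, then invoke a boundary Harnack principle for the linearized equation satisfied by directional derivatives to show that $\nabla v$ is parallel to a fixed vector, and finally read off the linear profile from the free boundary condition together with uniform ellipticity.

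\textbf{Step 1 — Monotonicity cone.} After a rotation of coordinates I may assume $\{v>0\}=\{x_n>g(x')\}$. The Lipschitz bound $\mathrm{Lip}(g)\le M$ implies that $\{v>0\}$ is invariant under translations $x\mapsto x+t\tau$ for $t\ge 0$ whenever $\tau$ lies in the open cone
\[
\mathcal{C}_M := \{\tau\in S^{n-1}\,:\, \tau_n>M|\tau'|\}.
\]
For such $\tau$, the translate $v_\tau(x):=v(x+t\tau)$ is again a global solution and satisfies $v_\tau\ge v=0$ on $\mathfrak{F}(v)$. A comparison principle for $\mathrm{F}(D^2\,\cdot\,)=0$ on the Lipschitz domain $\{v>0\}$, together with the linear growth of $v$ at infinity, yields $v_\tau\ge v$ everywhere; consequently $\partial_\tau v\ge 0$ (in the viscosity sense) in $\{v>0\}$ for every $\tau\in\mathcal{C}_M$.

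\textbf{Step 2 — Linearized problem and boundary Harnack.} In $\{v>0\}$ the equation $\mathrm{F}(D^2v)=0$ has constant coefficients and is uniformly elliptic, so $v\in C^{2,\alpha}_{\mathrm{loc}}$ and each $w_\tau:=\partial_\tau v$ is a classical non-negative solution of the linear equation $a^{ij}(x)\partial_{ij}w=0$ with $a^{ij}(x)=\mathrm{F}_{M_{ij}}(D^2v(x))$ satisfying $\lambda I\le (a^{ij})\le\Lambda I$. By the strong maximum principle, for each $\tau\in\mathcal{C}_M$ either $w_\tau\equiv 0$ or $w_\tau>0$ throughout $\{v>0\}$. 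Since $v$ is non-constant, there exists $\tau\in\mathcal{C}_M$ with $w_\tau>0$; by openness, this holds for all $\tau$ in a nonempty open subcone. The Caffarelli--Fabes--Mortola--Salsa boundary Harnack principle for linear uniformly elliptic equations on Lipschitz domains then shows that for any two such $\tau_1,\tau_2$, the ratio $w_{\tau_1}/w_{\tau_2}$ is H\"older continuous up to $\mathfrak{F}(v)$.

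\textbf{Step 3 — Rescaling forces constancy of the ratio.} The rescaled function $v_R(x):=v(Rx)/R$ solves the same free boundary problem, and its free boundary is the $M$-Lipschitz graph $x_n=g(Rx')/R$. Derivatives rescale as $\partial_\tau v_R(x)=(\partial_\tau v)(Rx)$, so the quotient $w_{\tau_1}/w_{\tau_2}$ is scale invariant. Applying the boundary Harnack principle to $v_R$ on $B_1$ provides a uniform-in-$R$ H\"older oscillation decay for $w_{\tau_1}/w_{\tau_2}$ on $B_R$; letting $R\to\infty$ forces this quotient to be constant on $\{v>0\}$. Letting $\tau_1,\tau_2$ range over the subcone, I conclude that $\nabla v(x)$ is parallel to one fixed unit vector $e\in S^{n-1}$ throughout $\{v>0\}$.

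\textbf{Step 4 — Identification of the profile.} Step 3 yields $v(x)=\phi(x\cdot e)$ for some non-decreasing $\phi:\mathbb{R}\to[0,\infty)$. Substituting into $\mathrm{F}(D^2v)=0$ gives $\mathrm{F}(\phi''(s)\,e\otimes e)=0$, and uniform ellipticity forces $\phi''\equiv 0$ in $\{\phi>0\}$. Hence, up to a translation, $\phi(s)=\mu s^+$ with $\mu>0$, and the free boundary condition $|\nabla v|=1$ on $\mathfrak{F}(v)=\{x\cdot e=0\}$ pins down $\mu=1$. Therefore $v(x)=(x\cdot e)^+$, and in particular $\mathfrak{F}(v)$ is a hyperplane, so $g$ is linear.

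\textbf{Main obstacle.} The crux is Step 3: converting a local H\"older boundary Harnack estimate for the ratio into a genuinely global rigidity statement requires iterating the oscillation decay across all dyadic scales and using the scale-invariance of both the linearized equation and the Lipschitz norm of $g$. A subsidiary delicate point is validating the comparison principle in Step 1 on the unbounded Lipschitz domain $\{v>0\}$, which needs the Lipschitz/linear growth of $v$ together with a Phragm\'en--Lindel\"of-type argument for $\mathrm{F}(D^2\,\cdot\,)=0$; if either piece fails, the monotonicity inside the cone cannot be propagated globally.
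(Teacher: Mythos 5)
First, a point of reference: the paper does not actually prove this lemma --- it is quoted (in one-phase form) from De Silva--Ferrari--Salsa \cite{DFS15}, and the argument there, as in \cite{DeS11}, runs along a different line from yours: one blows up $v$ at a point of differentiability of $g$ to obtain a half-plane solution, invokes the flatness-implies-$C^{1,\beta}$ theorem to get a $C^{1,\beta}$ bound on $g$ at unit scale, and then applies the same estimate to the rescalings $v_R(x)=v(Rx)/R$; scale invariance makes the $C^{1,\beta}$ seminorm of $g$ decay like $R^{-\beta}$, so letting $R\to\infty$ forces $g$ to be linear. Your route --- monotonicity cone plus boundary Harnack for ratios of directional derivatives --- is the classical Caffarelli/Feldman scheme and is in principle viable, but as written it has concrete gaps.

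The most serious one is in Step 2: you assert $v\in C^{2,\alpha}_{\mathrm{loc}}$ and linearize classically. Evans--Krylov requires convexity or concavity of $F$, which is nowhere assumed here (only (A0)--(A2)); for general uniformly elliptic $F$ interior $C^{2}$ regularity is false (Nadirashvili--Vladut), so the coefficients $a^{ij}(x)=F_{M_{ij}}(D^2v(x))$ need not exist. The repair is to work with incremental quotients, which lie in the Pucci class $S(\lambda/n,\Lambda)$ by \cite[Thm.~5.3]{CC95}, and to use a boundary Harnack principle valid for that class on Lipschitz domains (the Caffarelli--Fabes--Mortola--Salsa theorem you invoke is for divergence-form operators); the combinations $w_{\tau_1}-m\,w_{\tau_2}=\partial_{\tau_1-m\tau_2}v$ needed for the oscillation-decay iteration are again directional derivatives, hence again in $S$, so the scheme survives --- but none of this appears in your write-up. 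Two further global inputs are used without proof: (i) Step 1 needs the Phragm\'en--Lindel\"of comparison on the unbounded epigraph, which you flag but do not carry out, and which in turn needs $v$ to be globally Lipschitz --- not among the stated hypotheses; (ii) in Step 3 the dyadic oscillation decay only forces the ratio to be constant if $\sup_{B_R} w_{\tau_1}/w_{\tau_2}$ is bounded uniformly in $R$, and that requires the nondegeneracy (linear growth) of $v$, again not established. So the proposal is a reasonable sketch of an alternative proof, but it is not yet a proof.
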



Finally, we are in a position of presenting the proof of Theorem \ref{Holder1}.

\begin{proof}[{\bf Proof of Theorem \ref{Holder1}}]
Let $\tilde{\varepsilon}(\verb"universal")>0$ be the constant from Theorem  \ref{th 5.1. introd.}. Without loss of generality, we may assume $\mathrm{Q}(0)=1$. Now, consider the re-scaled functions
$$
    u_k(x) \defeq u_{\delta_{k}}(x) = \frac{u(\delta_{k}x)}{\delta_k},
$$
with $\delta_{k} \to 0$ as $k \to \infty$. Notice that $u_k$ solves in the viscosity sense
$$
\left\{
\begin{array}{rclcl}
\mathcal{H}_k(x, \nabla u_k)F_k(x,D^2u_k)&=& f_k(x) & \mbox{in} & B^{+}_{1}(u_k),\\
| \nabla u_{k} |&=& \mathrm{Q}_k(x) & \mbox{on} & \mathfrak{F}(u_k),
\end{array}
\right.
$$
where,
$$
\left\{
\begin{array}{rcl}
  F_{x}(x, \mathrm{X}) & \defeq & \delta_k F\left(\delta_k x, \delta_k^{-1}\mathrm{X}\right) \\
  \mathcal{H}_{k}(x, \xi) & \defeq &  \mathcal{H}\left(\delta_k x, \xi\right)\\
  \mathfrak{a}_{k}(x) & \defeq & \mathfrak{a}(\delta_k x)\\
  f_{k}(x) & \defeq & \delta_kf(\delta_k x)\\
  \mathrm{Q}_{k}(x)&\defeq& \mathrm{Q}(\delta_k x).
\end{array}
\right.
$$
with $F_{k}, \mathcal{H}_{k}$ and $\mathfrak{a}_{k}$ fulfilling the structural assumptions (A0)-(A2), \eqref{1.2} and \eqref{1.3}.
Furthermore, for $k$ large, the smallness conditions are satisfied for a constant $\tilde\varepsilon(\verb"universal")$. In fact, in $B_1$ we have
\begin{eqnarray*}
   |f_{k}(x)| &=& \delta_{k} |f(\delta_{k}x)| \le \delta_{k} \|f\|_{L^{\infty}} \le \tilde{\varepsilon}^{2}\\
    |\mathrm{Q}_{k}(x)-1| &=& |\mathrm{Q}_{k}(x)-\mathrm{Q}_{k}(0)| \le  \tau(1) \delta^{\beta}_{k} \le \tilde{\varepsilon}^{2}\\
    \Theta_{F_k}(x) &\le& \tilde{\varepsilon}^{2}
\end{eqnarray*}
for $k$ large enough. Therefore, using non-degeneracy and uniform Lipschitz continuity of the $u_{k}'s $ provided by Theorems \ref{Lipschitz} and \ref{Nondeg}, standard arguments imply that, up to a subsequence,
\begin{enumerate}
\item[(i)] There exists $u_{\infty} \in C(\Omega)$ such that
$
    u_{k} \to u_{\infty} \quad \textrm{uniformly on compact sets};
$
\item[(ii)] There exists $\mathrm{F}_{\infty}: \text{Sym}(n) \rightarrow \mathbb{R}$ such that $F_k \to \mathrm{F}_{\infty}$ locally uniformly ;
\item[(iii)]
$
    \partial \{u_{k} >0\} \to \partial \{u_{\infty} >0\} \quad \textrm{locally in the Hausdorff distance;}
$
\item[(iv)] $\max\left\{\|f_k\|^{\frac{1}{p+1}}_{L^{\infty}(B_1)}, \|\mathrm{Q}_k-1\|_{L^{\infty}(B_1)}, \Theta_{F_k}(x) \right\} = \text{o}(1)$ as $k \to \infty$.
\end{enumerate}
Now, as in Lemma \ref{compactness} and using a Cutting Lemma as one in \cite[Lemma 6]{IS12},
the blow-up limit $u_{\infty}$ solves the global homogeneous one-phase free boundary problem
 $$
\left\{
\begin{array}{rclcl}
\mathrm{F}_{\infty}(D^2 u_{\infty}) &=& 0& \mbox{in} &  \{u_{\infty}>0\},\\
|\nabla u_{\infty}|&=&1   & \mbox{on} & \mathfrak{F}(u_{\infty}).
\end{array}
\right.
$$
Furthermore, since $\mathfrak{F}(u_k)$ is a Lipschitz graph in a neighborhood of $0$, we also have from items $(i)-(iii)$ that $\mathfrak{F}(u_{\infty})$ is Lipschitz continuous. Thus, from Lemma \ref{Liouville}  we conclude that $u_{\infty}$ is a one-phase solution, i.e. up to rotations,
$$
    u_{\infty}(x) = x^{+}_{n}.
$$
Thus, for $k$ large enough we have
$$
    \|u_{k}-u_{\infty}\|_{L^{\infty}(B_1)} \le \tilde{\varepsilon}.
$$

By combining the above facts, one concludes that for all $k$ large enough, $u_k$ fulfils $\tilde{\varepsilon}$-flat in $B_1$ (see Theorem \ref{th 5.1. introd.}). Thus,
$$
    \left(x_n-\tilde{\varepsilon}\right)^{+} \le u_{k}(x) \le \left(x_n+\tilde{\varepsilon}\right)^{+}, \,\,\, x \in B_1.
$$
Therefore, $\mathfrak{F}(u_k)$ is a graph $C^{1,\gamma}$  and consequently $\mathfrak{F}(u)$ are $C^{1,\gamma}$, for some $\gamma \in (0,1)$. This completes the proof.
\end{proof}

\appendix
\section{Appendix}

\subsection{A Harnack inequality for doubly degenerate elliptic PDEs}

For the reader's convenience, in what follows we gather the statements of two fundamental results in elliptic regularity, namely the Weak Harnack inequality and the Local Maximum Principle. Such pivotal tools will provide a Harnack inequality (resp. local H\"{o}lder regularity) to viscosity solutions.

\begin{theorem}[{\bf Weak Harnack inequality, \cite[Theorem 2]{I11}}]\label{wharn}
Let $u$ be a non-negative continuous function such that
$$
   F_0(x, \nabla u, D^2u)\leq 0 \quad\textrm{ in }\quad B_1
$$
in the viscosity sense. Assume that $F_0$ is uniformly elliptic in the $X$ variable (see (A1) condition) and $F_0 \in C^0(B_1\times \left(\R^n \setminus B_{\mathrm{M}_{\mathrm{F}}}\right)\times \text{Sym}(n))$ for some $\mathrm{M}_{\mathrm{F}} \geq 0$. Further assume that
\begin{equation}\label{eq.elliwh}
   |\xi|\geq \mathrm{M}_{\mathrm{F}} \quad \text{and} \quad F_0(x, \xi, X)\leq 0 \quad \Longrightarrow \quad \mathscr{P}^-_{\lambda,\Lambda}(X)-\sigma(x)|\xi|-f_0(x)\leq 0.
\end{equation}
for continuous functions $f_0$ and $\sigma$ in $B_1$. Then, for any $q_1 > n$
\[
   \|u\|_{L^{p_0}\left(B_{\frac{1}{4}}\right)}\leq C.\left\{\inf_{B_{\frac{1}{2}}} u+\max\left\{\mathrm{M}_{\mathrm{F}}, \|f_0\|_{L^n(B_1)}\right\}\right\}
\]
for some $p_0(\verb"universal")>0$ and a constant $C>0$ depending on $n, q_1, \lambda, \Lambda$ and $\|\sigma\|_{L^{q_1}(B_1)}$.
\end{theorem}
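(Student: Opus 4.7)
The plan is to adapt the Caffarelli--Cabr\'e--Krylov--Safonov scheme to the present setting, where the genuine novelty is the gradient threshold $M_F$ and the first-order term $\sigma(x)|\xi|$ in \eqref{eq.elliwh}. The conclusion will follow once an $L^{\varepsilon_0}$--type measure decay for $u$ on $B_{1/4}$ is established, which upgrades to the stated $L^{p_0}$ bound by integration of the distribution function for any $p_0<\varepsilon_0$, up to the customary dimensional/ellipticity constants and the norm $\|\sigma\|_{L^{q_1}(B_1)}$.

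First, I would translate the hypothesis \eqref{eq.elliwh} into a pointwise viscosity statement: whenever a $C^2$ test function $\varphi$ touches $u$ from below at a point $x_0$ with $|\nabla\varphi(x_0)|\geq M_F$, one has
$$
\mathscr{P}^-_{\lambda,\Lambda}(D^2\varphi(x_0))\leq \sigma(x_0)|\nabla\varphi(x_0)|+f_0(x_0),
$$
so that, away from the regime $|\nabla\varphi|<M_F$, the function $u$ is a Pucci supersolution with drift $\sigma$ and source $f_0$. Second, I would run the standard ABP argument for paraboloids touching the convex envelope of $(-u)^+$: the hypothesis $\sigma\in L^{q_1}$ with $q_1>n$ allows one to absorb the drift term via a H\"older inequality in the contact set, while the threshold $M_F$ contributes only through paraboloids with small slope, producing an additive error of size $CM_F$. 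Third, I would establish the key barrier/measure lemma: there exist universal constants $\delta_0,M_0>0$ and $\mu_0\in(0,1)$ such that if $\inf_{B_{1/2}}u\leq 1$ and $\max\{M_F,\|f_0\|_{L^n(B_1)}\}\leq\delta_0$, then $|\{u\leq M_0\}\cap B_{1/4}|\geq \mu_0$. This is proved by constructing a radial barrier which is nonpositive on $\overline{B}_{1/2}$ and vanishes outside $B_{3/4}$, and applying the ABP estimate above to the difference $u-\text{barrier}$. Fourth, a Calder\'on--Zygmund cube decomposition, combined with the measure lemma at each dyadic scale, yields the power-law decay $|\{u>t\}\cap B_{1/4}|\leq Ct^{-\varepsilon_0}$. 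The weak Harnack inequality then follows after a normalization in which one replaces $u$ by $u/\bigl(\inf_{B_{1/2}}u+\max\{M_F,\|f_0\|_{L^n(B_1)}\}\bigr)$ to remove the smallness assumption.

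The main obstacle, in my view, is handling the threshold $M_F$ consistently through the Calder\'on--Zygmund iteration. Under the natural rescaling $u_r(x)=u(rx)/r$ the source $f_0$ and the parameter $M_F$ transform homogeneously in $r$, whereas $\|\sigma\|_{L^{q_1}}$ is only dimensionally invariant in the borderline case $q_1=n$; hence one must track $\max\{M_F,\|f_0\|_{L^n(B_1)}\}$ as a single universal error, rather than chasing each quantity independently. This bookkeeping is the source of the $\max$ (rather than a sum) appearing in the final estimate, and it is the step that requires the most care to verify that the barriers built at each rescaled scale still satisfy the gradient lower bound $|\nabla\varphi|\geq M_F$ on the relevant portion of the contact set.
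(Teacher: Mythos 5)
Your proposal follows the same overall strategy the paper points to: reduce to Pucci supersolution inequalities away from small gradients, run the ABP/barrier/Calder\'on--Zygmund machinery of Caffarelli--Cabr\'e, and conclude via an $L^{\varepsilon}$-type decay of the superlevel sets. That is indeed the scheme of \cite{I11} (and of \cite[Section 4.2]{CC95}), so the outline is the right one.

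There is, however, a genuine gap in the way you handle the threshold $\mathrm{M}_{\mathrm{F}}$, and it is not a bookkeeping issue. Under the rescaling $u_r(x)=u(rx)/r$ one has $\nabla u_r(x)=\nabla u(rx)$, so the gradient is unchanged and the threshold $\mathrm{M}_{\mathrm{F}}$ is \emph{scale-invariant}, not homogeneous of some positive power of $r$ as you assert. Consequently, as the Calder\'on--Zygmund iteration passes to smaller and smaller cubes, the degenerate region $\{|\nabla u_r|<\mathrm{M}_{\mathrm{F}}\}$ does not shrink relative to the cube: the viscosity inequality may simply give no information on the contact set in a fixed-size (after renormalization) gradient regime. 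This is precisely where the argument cannot be "run naively", and it is in fact the gap in the original proof of the $L^\varepsilon$ decay in \cite{I11}. The paper emphasizes this: the measure-decay lemma was corrected only later by Imbert--Silvestre \cite{IS16}, where it is formulated for "Pucci extremal operators for large gradients" $\widetilde{\mathscr{P}}^{\pm}_{\lambda,\Lambda}$ (which return $\pm\infty$ below a threshold $\tau$), and is proved to hold only when $\tau$ is \emph{universally small}. One must then construct barriers whose gradient is guaranteed to stay above the threshold on the relevant part of the contact set, and verify that the smallness condition on $\tau$ is preserved along the iteration. Your proposal acknowledges that something must be tracked here, but by calling it "bookkeeping" and by mis-stating the scaling of $\mathrm{M}_{\mathrm{F}}$, it elides exactly the step that makes the theorem hard. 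As written, the barrier/measure lemma in your Step 3 has not been shown to survive the rescaling in Step 4. To close this, you should either incorporate the Imbert--Silvestre $L^\varepsilon$-estimate directly (as the paper does, cf.\ \cite[Theorem 5.1]{IS16}) or reproduce their argument showing that the barrier's gradient dominates $\mathrm{M}_{\mathrm{F}}$ at every scale where the contact set is probed.
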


\begin{theorem}[{\bf Local Maximum Principle, \cite[Theorem 3]{I11}}] \label{localmax}
Let $u$ be a continuous function satisfying
$$
   F_0(x, \nabla u, D^2u)\geq 0 \quad\textrm{ in }\quad B_1
$$
in the viscosity sense. Assume that $F_0$ is uniformly elliptic in the $X$ variable (see (A1) condition) and $F_0 \in C^0(B_1\times \left(\R^n \setminus B_{\mathrm{M}_{\mathrm{F}}}\right)\times \text{Sym}(n))$ for some $\mathrm{M}_{\mathrm{F}} \geq 0$. Further assume that
\begin{equation}\label{eq.elliplmp}
   |\xi|\geq \mathrm{M}_{\mathrm{F}} \quad \text{and} \quad F_0(x, \xi, X)\geq 0 \quad \Longrightarrow \quad \mathscr{P}^+_{\lambda,\Lambda}(X)+\sigma(x)|\xi|+f_0(x)\geq 0.
\end{equation}
for continuous functions $f_0$ and $\sigma$ in $B_1$. Then, for any $p_1>0$ and $q_1 >n$
\[
\sup_{B_{\frac{1}{4}}}u\leq C.\left\{\|u^+\|_{L^{p_1}\left(B_{\frac{1}{2}}\right)}+\max\left\{\mathrm{M}_{\mathrm{F}}, \|f_0\|_{L^n(B_1)}\right\}\right\}
\]
where $C>0$ is a constant depending on $n, q_1, \lambda, \Lambda,\|\sigma\|_{L^{q_1}(B_1)}$ and $p_1$.
\end{theorem}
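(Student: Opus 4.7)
The plan is to follow the classical Caffarelli--Cabr\'e scheme for the Local Maximum Principle (see \cite[Theorem 4.8]{CC95}), with the modifications required by the gradient-degenerate threshold $M_F$ entering through assumption \eqref{eq.elliplmp}.

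First I would normalize. Replacing $u$ by $u/K$ with $K \defeq \|u^+\|_{L^{p_1}(B_{1/2})} + \max\{M_F, \|f_0\|_{L^n(B_1)}\}$, the structural inequality \eqref{eq.elliplmp} is preserved for the rescaled data (with $M_F, f_0$ replaced by $M_F/K, f_0/K$), and the goal reduces to proving $\sup_{B_{1/4}} u \leq C$ under the uniform bound $\max\{M_F, \|f_0\|_{L^n(B_1)}, \|u^+\|_{L^{p_1}(B_{1/2})}\} \leq 1$.

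The heart of the argument is a weak-$L^\varepsilon$ decay estimate: there exist universal constants $\varepsilon > 0$, $\mu \in (0,1)$ and $M > 1$ such that
\begin{equation*}
\bigl|\{u > M^k\} \cap B_{1/2}\bigr| \leq (1-\mu)^k |B_{1/2}|, \qquad k \in \mathbb{N}.
\end{equation*}
I would prove this inductively. The inductive step is based on an ABP-type comparison applied to auxiliary functions of the form $u - \psi$, where $\psi(x) = c(|x|^{-\alpha}-1)$ is a radial barrier built on an appropriate dyadic annulus. The delicate new point, absent from the classical setup, is that the Pucci inequality provided by \eqref{eq.elliplmp} is only available on the region $\{|\nabla u| \geq M_F\}$; I would therefore calibrate $\psi$ so that $|\nabla\psi| \geq M_F$ throughout its support, which is possible precisely because the normalization forces $M_F \leq 1$. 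At points where a test function touches $u - \psi$ from above, the gradient absorbed from $\psi$ guarantees that the gradient of the effective test function stays above $M_F$, thereby unlocking the Pucci bound. The hypothesis $q_1 > n$ on $\sigma$ is used to absorb the first-order term $\sigma|\xi|$ into the ABP estimate by H\"older's inequality, exactly as in the Krylov--Safonov framework.

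From the weak-type decay, layer-cake integration yields $\|u^+\|_{L^{p_0}(B_{1/2})} \leq C$ for a universal $p_0 > 0$. A standard interpolation/covering argument then upgrades this to the stated bound for every $p_1 > 0$: if $p_1 \geq p_0$, interpolate $\|u^+\|_{L^{p_0}} \leq \|u^+\|_\infty^{1 - p_0/p_1} \|u^+\|_{L^{p_1}}^{p_0/p_1}$ and absorb the $L^\infty$ factor; if $p_1 < p_0$, iterate the same decay on balls of intermediate radii between $1/4$ and $1/2$ to lower the exponent. The main obstacle throughout is the barrier construction in the inductive step: the operator loses its useful structure on $\{|\nabla u| < M_F\}$, so the whole scheme must be arranged so that all decisive ABP/Pucci computations take place on the large-gradient region. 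This is the core technical departure from the classical uniformly-elliptic Local Maximum Principle and is exactly what dictates the presence of the $\max\{M_F, \|f_0\|_{L^n(B_1)}\}$ term on the right-hand side of the estimate.
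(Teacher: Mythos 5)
Your overall architecture --- normalize, prove a weak-$L^\varepsilon$ decay by an ABP-type barrier comparison, layer-cake to an $L^{p_0}$ bound, then interpolate in $p_1$ --- is exactly the route the paper has in mind: this theorem is imported from \cite{I11}, and the paper only records that its proof follows the uniformly elliptic scheme of \cite[Section 4.2]{CC95} once the $L^\varepsilon$-lemma \eqref{eq.lep} is available. So there is no divergence of strategy to discuss; the issue is with the one step where the degeneracy actually matters.

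That step, as you have written it, contains a genuine gap. The structural hypothesis \eqref{eq.elliplmp} yields the Pucci inequality only when the \emph{test function touching $u$ from above} has gradient of modulus at least $\mathrm{M}_{\mathrm{F}}$. In the measure estimate the relevant test functions are $\psi+P$ (equivalently $\psi-\ell$ with $\ell$ affine), where the slopes $\nabla P$ produced by the ABP/convex-envelope argument sweep out a whole set; nothing prevents $\nabla P(x_0)$ from nearly cancelling $\nabla\psi(x_0)$, so $|\nabla(\psi+P)(x_0)|$ can drop below $\mathrm{M}_{\mathrm{F}}$ and the ellipticity is lost exactly at the contact points you need. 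Calibrating the barrier so that $|\nabla\psi|\geq \mathrm{M}_{\mathrm{F}}$ on its support does not repair this. This is precisely the gap in Imbert's original proof of \eqref{eq.lep} that the paper flags explicitly, and whose repair in \cite{IS16} required a substantially different argument (sliding paraboloids/cusps), valid moreover only when the gradient threshold is below a \emph{small} universal constant $\varepsilon_0$ --- not merely below $1$, which is all your normalization by $K$ delivers; one must rescale by $K/\varepsilon_0$ or the like, and this should be said. The remaining ingredients (using $q_1>n$ to absorb $\sigma|\xi|$ \`a la Krylov--Safonov, the layer-cake step, and the passage to arbitrary $p_1>0$ --- where, incidentally, for $p_1\geq p_0$ H\"older's inequality alone suffices and the interpolation-plus-absorption trick is only needed for $p_1<p_0$) are standard and unobjectionable.
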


Let us recall that such results were proved in Imbert's manuscript \cite{I11} by following the strategy of the uniformly elliptic case, see \cite[Section 4.2]{CC95}. Such a strategy is based on the so-called $L^\varepsilon$-Lemma, which establishes a polynomial decay for the measure of the super-level sets of a non-negative super-solution for the Pucci extremal operator $\mathscr{P}^+_{\lambda,\Lambda}$:
\begin{equation}\label{eq.lep}
\Leb(\left\{x \in B_1:u(x)>t\right\}\cap B_1)\leq \frac{C}{t^{\varepsilon}}.
\end{equation}

Unfortunately, Imbert's manuscript has a gap in the proof of \eqref{eq.lep}. Such an error was recently made up in a joint work with Silvestre, see \cite{IS16}, where an appropriate $L^\varepsilon$-estimate was addressed. In fact, their proof holds for ``Pucci extremal operators for large gradients'' defined, for a fixed $\tau$, by:
\[
\widetilde{\mathscr{P}}^+_{\lambda,\Lambda}(D^2u, \nabla u)\defeq
\left\{\begin{array}{ll}
\mathscr{P}^+_{\lambda,\Lambda}(D^2 u)+\Lambda|\nabla u| & \text{ if }|\nabla u|\geq \tau \\
+\infty & \text{ otherwise }
\end{array}\right.
\]
\[
\widetilde{\mathscr{P}}^-_{\lambda,\Lambda}(D^2u, \nabla u)\defeq
\left\{\begin{array}{ll}
\mathscr{P}^-_{\lambda,\Lambda}(D^2 u)-\Lambda|\nabla u| & \text{ if }|\nabla u|\geq \tau \\
-\infty & \text{ otherwise.}
\end{array}\right.
\]
The $L^\varepsilon$-estimate is proved to hold whenever $\tau\leq \varepsilon_0$ is universal (see, \cite[Theorem 5.1]{IS16}). Moreover, notice that the ellipticity condition $\widetilde{\mathscr{P}}^-_{\lambda,\Lambda}$ is consistent with \eqref{eq.elliwh} if we take $\sigma(x)\equiv\Lambda$. Precisely, if \eqref{eq.elliwh} and $u$ is a super-solution for $F_0$, then it is also a super-solution for $\widetilde{\mathscr{P}}^-_{\lambda,\Lambda}$ with right hand side $f_0$. An analogous reasoning is valid for $\widetilde{\mathscr{P}}^+_{\lambda,\Lambda}$ and \eqref{eq.elliplmp}.

In this point, once the $L^\varepsilon$ is derived, the proof of Theorem \ref{wharn} is exactly as the one in \cite{I11} which is, in turn, a modification of the uniformly elliptic case in \cite[Theorem 4.8, a]{CC95}. As for Theorem \ref{localmax}, it also follows from \eqref{eq.lep} by assuming (in a fist moment) that the $L^\varepsilon$ norm of $u^+$ is small and then obtaining the general result by interpolation. Indeed, the smallness of the $L^\varepsilon$ norm readily implies \eqref{eq.lep} which in turn gives $u$ is bounded (see, \cite[Lemma 4.4]{CC95}, which is adapted in \cite[Section 7.2]{I11}).

Notice that our class of operators fits in this scenario by setting
\[
   F_0(x, \nabla v, D^2v) \defeq \mathcal{H}(x, \nabla v)F(x, D^2 v)-f(x).
\]
and
\[
f_0(x)\defeq \frac{L_1^{-1}f^{+}(x)}{\varepsilon_0^p + \mathfrak{a}(x)\varepsilon_0^q} \quad \text{for suitable} \quad \varepsilon_0>0.
\]
In effect, we have that whenever
\[
\mathcal{H}(x, \nabla v + \xi)F(x, D^2v)\leq f(x) \quad \text{in} \quad B_1
\]
in the viscosity sense, then the ellipticity condition of $F$, this is (A1), ensures us that
\[
\mathscr{P}^-_{\lambda,\Lambda}(D^2v)\leq F(x, D^2v)\leq \frac{f(x)}{\mathcal{H}(x, \nabla v + \xi)}\le \frac{f^{+}(x)}{\mathcal{H}(x, \nabla v + \xi)} \quad \text{in} \quad B_1\cap \left\{|\nabla v + \xi|> \varepsilon_0\right\},
\]
whenever $|\nabla v|>\mathrm{M}_{\mathrm{F}} = \frac{3}{2}\varepsilon_0$ and $|\xi| \leq \frac{1}{2}\varepsilon_0$ so that
\[
\mathscr{P}^-_{\lambda,\Lambda}(D^2v)-\Lambda|\nabla v|-f_0(x)\leq \left(\frac{1}{\mathcal{H}(x, \nabla v + \xi)}-\frac{L_1^{-1}}{\varepsilon_0^p + \mathfrak{a}(x)\varepsilon_0^q}\right)f^{+}(x)\leq 0 \quad \text{in} \quad B_1\cap \left\{|\nabla v|> \frac{3}{2}\varepsilon_0\right\}.
\]

Remember that the constants obtained in \cite{IS16} are monotone with respect to $\tau$ and bounded away from zero and infinity, so we get a uniform estimate as \eqref{eq.lep} for supersolutions of $\mathcal{G}_{\xi}[v]\defeq \mathcal{H}(x, \nabla v + \xi)F(x, D^2v)$.

Therefore, in such a situation we have (recall $\sigma(x)\equiv\Lambda$) from Theorem \ref{wharn}:
 \begin{equation}\label{EqWHN}
      \|v\|_{L^{p_0}\left(B_{\frac{1}{4}}\right)}\leq\displaystyle C.\left\{\inf_{B_{\frac{1}{2}}} v + \frac{3}{2}\varepsilon_0 + \left\|f_0\right\|_{L^{N}(B_1)}\right\} \leq \Xi_0,
 \end{equation}
where
$$
\Xi_0 \defeq \left\{
 \begin{array}{lcl}
   \displaystyle C.\left\{\inf_{B_{\frac{1}{2}}} v + \min\left\{\frac{3}{2}, \left[(q+1)\sqrt[n]{|B_1|}L_1^{-1}\left\|\frac{f^{+}}{1+ \mathfrak{a}}\right\|_{L^{\infty}(B_1)}\right]^{\frac{1}{q+1}}\right\}\right\} & \text{if} & \varepsilon_0 \in (0, 1] \\
   \displaystyle C.\left\{\inf_{B_{\frac{1}{2}}} v + \min\left\{\frac{3}{2},\left[(p+1)\sqrt[n]{|B_1|}L_1^{-1}\left\|\frac{f^{+}}{1+ \mathfrak{a}}\right\|_{L^{\infty}(B_1)}\right]^{\frac{1}{p+1}}\right\}\right\} & \text{if} & \varepsilon_0 \in (1, \infty).
 \end{array}
 \right.
$$
Notice that we have used in above inequalities that the function
$$
(0, \infty) \ni t \mapsto \mathfrak{h}(t) = \frac{3}{2}t + \frac{1}{t^s} \left(\sqrt[n]{|B_1|}L_1^{-1}\left\|\frac{f^{+}}{1+ \mathfrak{a}}\right\|_{L^{\infty}(B_1)}\right)
$$
is optimized (lowest upper bound) when $t^{\ast} = \left(\frac{2s}{3}\sqrt[n]{|B_1|}L_1^{-1}\left\|\frac{f^{+}}{1+ \mathfrak{a}}\right\|_{L^{\infty}(B_1)}\right)^{\frac{1}{s+1}}$ for $s \in (0, \infty)$.

In conclusion, in any case, we obtain (since $0<p\le q<\infty$)
{\small{
$$
 \displaystyle \|v\|_{L^{p_0}\left(B_{\frac{1}{4}}\right)}\leq  C.\left\{\inf_{B_{\frac{1}{2}}} v + (q+1)^{\frac{1}{q+1}}\Pi^{f^{+}, \mathfrak{a}}_{p, q}\right\}
$$
}}
where
$$
\Pi^{f^+, \mathfrak{a}}_{p, q} \defeq \max\left\{\left[\sqrt[n]{|B_1|}L_1^{-1}\left\|\frac{f^{+}}{1+ \mathfrak{a}}\right\|_{L^{\infty}(B_1)}\right]^{\frac{1}{p+1}}, \left[\sqrt[n]{|B_1|}L_1^{-1}\left\|\frac{f^{+}}{1+ \mathfrak{a}}\right\|_{L^{\infty}(B_1)}\right]^{\frac{1}{q+1}}\right\}.
$$

In a similarly way, from Theorem \ref{localmax}, if
\[
\mathcal{H}(x, \nabla v + \xi)F(x, D^2v)\geq f(x) \quad \text{in} \quad B_1
\]
in the viscosity sense we again have for suitable $\varepsilon_0>0$
\[
\mathscr{P}^+_{\lambda,\Lambda}(D^2v)\geq F(x, D^2v) \geq \frac{f(x)}{\mathcal{H}(x, \nabla v + \xi)} \geq -\frac{f^{-}(x)}{\mathcal{H}(x, \nabla v + \xi)}  \quad \text{in} \quad B_1\cap \left\{|\nabla v + \xi|> \varepsilon_0\right\},
\]
whenever $|\nabla v|> \mathrm{M}_{\mathrm{F}} = \frac{3}{2}\varepsilon_0$ and $|\xi| \leq \frac{1}{2}\varepsilon_0$, we can set $f_0(x)\defeq \frac{L_1^{-1}f^{-}(x)}{\varepsilon_0^{p}+\mathfrak{a}(x)\varepsilon_0^{p}}
$
to get
\[
\mathscr{P}^+_{\lambda,\Lambda}(D^2v)+\Lambda|\nabla v|+f_0(x)\geq \left(\frac{L_1^{-1}}{\varepsilon_0^{p}+\mathfrak{a}(x)\varepsilon_0^{p}}-\frac{1}{\mathcal{H}(x, \nabla v + \xi)}\right)f^{-}(x) \geq 0  \quad \text{in} \quad B_1\cap \left\{|\nabla v|> \frac{3}{2}\varepsilon_0\right\}.
\]

Therefore, we have from Theorem \ref{localmax}:
\begin{equation}\label{EqLMP}
\begin{array}{ccl}
 \displaystyle \sup_{B_{\frac{1}{2}}} v & \leq & \displaystyle \|u^+\|_{L^{p_1}(B_1)}+ \frac{3}{2}\varepsilon_0+ \left\|f_0\right\|_{L^{n}(B_1)}\\
   & \le & \Xi_1,
\end{array}
\end{equation}
where, as before, we can estimate
$$
\Xi_1\defeq\left\{
 \begin{array}{lcl}
   \displaystyle C.\left\{\|v^+\|_{L^{p_1}(B_1)} + \min\left\{\frac{3}{2},\left[(q+1)\sqrt[n]{|B_1|}L_1^{-1}\left\|\frac{f^{-}}{1+ \mathfrak{a}}\right\|_{L^{\infty}(B_1)}\right]^{\frac{1}{q+1}}\right\}\right\} & \text{if} & \varepsilon_0 \in (0, 1] \\
   \displaystyle C.\left\{\|v^+\|_{L^{p_1}(B_1)} + \min\left\{\frac{3}{2}, \left[(p+1)\sqrt[n]{|B_1|}L_1^{-1}\left\|\frac{f^{-}}{1+ \mathfrak{a}}\right\|_{L^{\infty}(B_1)}\right]^{\frac{1}{p+1}}\right\}\right\} & \text{if} & \varepsilon_0 \in (1, \infty),
 \end{array}
 \right.
$$

Therefore, in any setting (since $0<p\le q<\infty$)
$$
\displaystyle \sup_{B_{\frac{1}{2}}} v \le C.\left\{\|v^+\|_{L^{p_1}(B_1)} + (q+1)^{\frac{1}{q+1}}\Pi^{f^-, \mathfrak{a}}_{p, q}\right\}
$$
thereby concluding this analysis.

Finally, combining \eqref{EqWHN} and \eqref{EqLMP}, we obtain the following Harnack inequality:

\begin{theorem}[{\bf Harnack inequality}]\label{ThmHarIneq} Let $u$ be a non-negative viscosity solution to
$$
F_0(x, \nabla v + \xi, D^2 v) = 0 \quad \text{in} \quad B_1.
$$
Then,
$$
\displaystyle \sup_{B_{\frac{1}{2}}} u(x) \leq \mathrm{C} \cdot\left\{\inf_{B_{\frac{1}{2}}} u(x) + (q+1)^{\frac{1}{q+1}}\Pi^{f, \mathfrak{a}}_{p, q}\right\},
$$
where $\mathrm{C} = \mathrm{C}(n, \lambda, \Lambda)>0$.
\end{theorem}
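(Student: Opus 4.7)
\textbf{Proof plan for Theorem \ref{ThmHarIneq}.} The plan is to deduce the Harnack inequality from the weak Harnack inequality (Theorem \ref{wharn}) applied to $u$ viewed as a supersolution, and the local maximum principle (Theorem \ref{localmax}) applied to $u$ viewed as a subsolution, both for the ``Pucci extremal operators with large gradient'' framework of Imbert--Silvestre. The doubly-degenerate coefficient $\mathcal{H}(x,\nabla v + \xi)$ is the source of the difficulty: it vanishes at critical points, so one cannot naively divide the equation by $\mathcal{H}$. The remedy is to restrict the region where we compare with a Pucci operator to the set $\{|\nabla v| > \frac{3}{2}\varepsilon_0\}$ for an auxiliary parameter $\varepsilon_0 > 0$, and to use that on the complementary set the Pucci-for-large-gradient operator is defined to be $\pm\infty$ (so the inequalities are vacuous there).

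\textbf{Step 1 (Reduction to Pucci with large gradient).} Setting $F_0(x,\xi,\mathrm{X}) \defeq \mathcal{H}(x,\nabla v + \xi) F(x,\mathrm{X}) - f(x)$, I would show, exactly as sketched in the paragraph preceding the theorem, that whenever $F_0(x,\nabla v, D^2 v) \leq 0$ in the viscosity sense and $|\nabla v| > \mathrm{M}_\mathrm{F} \defeq \frac{3}{2}\varepsilon_0$, the ellipticity assumption (A1) and the lower bound \eqref{1.2} on $\mathcal{H}$ give
\[
\mathscr{P}^{-}_{\lambda,\Lambda}(D^2 v) - \Lambda |\nabla v| - f_0(x) \leq 0, \qquad f_0(x) \defeq \frac{L_1^{-1} f^{+}(x)}{\varepsilon_0^p + \mathfrak{a}(x) \varepsilon_0^q},
\]
so the structural condition \eqref{eq.elliwh} holds with $\sigma(x) \equiv \Lambda$. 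Symmetrically, supersolutions ``from above'' yield the $\mathscr{P}^+_{\lambda,\Lambda}$ inequality \eqref{eq.elliplmp} with $f_0$ built from $f^{-}$. I would also verify the monotonicity of the Imbert--Silvestre constants in the threshold $\tau$ so that the inequalities in Theorems \ref{wharn}--\ref{localmax} apply with universal constants independent of the choice of $\varepsilon_0$.

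\textbf{Step 2 (Applying weak Harnack + LMP and optimizing $\varepsilon_0$).} Applying Theorem \ref{wharn} to $u$ (supersolution side) and Theorem \ref{localmax} to $u$ (subsolution side) yields, for some universal $p_0 > 0$,
\[
\|u\|_{L^{p_0}(B_{1/4})} \leq C\Big(\inf_{B_{1/2}} u + \tfrac{3}{2}\varepsilon_0 + \|f_0\|_{L^n(B_1)}\Big), \qquad \sup_{B_{1/2}} u \leq C\Big(\|u^+\|_{L^{p_0}(B_1)} + \tfrac{3}{2}\varepsilon_0 + \|f_0\|_{L^n(B_1)}\Big).
\]
Since $\|f_0\|_{L^n(B_1)} \leq |B_1|^{1/n} L_1^{-1} \varepsilon_0^{-s} \|f/(1+\mathfrak{a})\|_{L^\infty(B_1)}$ with $s \in \{p,q\}$ (depending on whether $\varepsilon_0 \lessgtr 1$), I would then optimize the function $\varepsilon_0 \mapsto \frac{3}{2}\varepsilon_0 + c\,\varepsilon_0^{-s}$ at the explicit minimizer recorded in the excerpt. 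The resulting bound is precisely $(q+1)^{1/(q+1)} \Pi^{f^{\pm},\mathfrak{a}}_{p,q}$ after taking the $\max$ over the two power regimes.

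\textbf{Step 3 (Combination).} Chaining the two inequalities from Step 2 through the common $L^{p_0}(B_{1/2})$ norm gives
\[
\sup_{B_{1/2}} u \leq C\Big(\inf_{B_{1/2}} u + (q+1)^{\frac{1}{q+1}} \Pi^{f,\mathfrak{a}}_{p,q}\Big),
\]
which is the claimed Harnack inequality.

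\textbf{Main obstacle.} The hard part is Step 1, namely verifying that the degenerate equation, once restricted to $\{|\nabla v| > \frac{3}{2}\varepsilon_0\}$, truly fits into the Imbert--Silvestre framework uniformly in $\varepsilon_0$, and that the $L^{\varepsilon}$-estimate constants (which were corrected in \cite{IS16}) are monotone and bounded so that passing to the limit/optimizing in $\varepsilon_0$ does not blow them up. Once that is in place, Steps 2 and 3 are algebraic manipulations of the resulting bounds.
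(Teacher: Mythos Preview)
Your proposal is correct and follows essentially the same approach as the paper: the argument in the Appendix reduces the doubly degenerate equation to the Imbert--Silvestre ``Pucci with large gradient'' framework on $\{|\nabla v|>\tfrac{3}{2}\varepsilon_0\}$, applies Theorems~\ref{wharn} and~\ref{localmax} with $f_0$ built from $f^\pm$, optimizes the resulting bound $\tfrac{3}{2}\varepsilon_0 + \|f_0\|_{L^n}$ over $\varepsilon_0$ to produce the $\Pi^{f,\mathfrak{a}}_{p,q}$ term, and then combines the weak Harnack and local maximum principle estimates through the $L^{p_0}$ norm. Your identification of the main obstacle (uniformity of the Imbert--Silvestre constants in the threshold parameter) is also exactly what the paper singles out.
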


\begin{remark}[{\bf Harnack inequality - scaled version}]\label{HarnIneqSV} For our purposes, it will be useful to obtain a scaled version of Harnack inequality: let $v$ be a non-negative viscosity solution to
$$
F_0(x, \nabla v + \xi, D^2v) = 0 \quad \text{in} \quad B_{r} \quad \text{for a fixed} \quad r \in (0, \infty) \quad \quad \text{and any} \quad \xi \in \mathbb{R}^n,
$$
where (A0)-(A2), \eqref{1.2} and \eqref{1.3} are in force. Then,
$$
\displaystyle \sup_{B_{\frac{r}{2}}} v(x) \leq \mathrm{C} \cdot\left\{\inf_{B_{\frac{r}{2}}} v(x) + (q+1)^{\frac{1}{q+1}}\max\left\{r^{\frac{p+2}{p+1}}, r^{\frac{p+2}{q+1}}\right\}\Pi^{f, \mathfrak{a}}_{p, q}\right\},
$$
where $\mathrm{C} = \mathrm{C}(n, \lambda, \Lambda)>0$.
\end{remark}

\subsection{The Comparison Principle: Doubly degenerate scenario}

In this subsection, we prove the classical result known as Comparison Principle. In order to do so, we consider the following approximation problem
\begin{equation}\label{appro_problem}
\mathcal{G}_{\varepsilon}[u] \defeq	\mathcal{H}(x, \varepsilon+\nabla u)[\varepsilon u + F(x, D^2 u)]  =  f(x) \quad \text{ in } \quad \Omega
\end{equation}
for $0<\varepsilon<1$ and $u\in C^0(\overline{\Omega})$. The desired comparison result will hold true by letting $\varepsilon \to 0^{+}$. The main idea of the proof follows essentially from \cite{CIL92} (see also \cite{HPRS21} for more details). As a separate interesting point, such a result is a non-homogeneous degenerate counterpart for the degenerate one in \cite[Theorem 1.1]{BD04}.

\begin{theorem}[{\bf Comparison Principle}]\label{comparison principle}
	Assume that assumptions (A0)-(A1), \eqref{1.2} and \eqref{1.3} there hold. Let $f \in C^0(\overline{\Omega})$. Suppose $u_1$ and $u_2$ are respectively a viscosity supersolution and 	subsolution of \eqref{appro_problem}. If $u_1 \leq u_2$ on $\partial \Omega$, then $u_1 \leq u_2$ in $\Omega$.
\end{theorem}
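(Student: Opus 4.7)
The plan is to establish the comparison via the classical Ishii--Crandall doubling-of-variables scheme (cf.\ Theorem 3.3 of the User's Guide \cite{CIL92}); the regularization in \eqref{appro_problem} has been custom-built so that this argument goes through. Two features of $\mathcal{G}_\varepsilon$ are crucial. First, the shift $\varepsilon+\nabla u$ inside $\mathcal{H}$, together with \eqref{1.2}, yields the uniform positive lower bound
\[
\mathcal{H}(x,\varepsilon+\xi)\;\geq\; L_1\,\varepsilon^{p}\;>\;0,\qquad \forall\,(x,\xi)\in \Omega\times\R^n,
\]
so that dividing the viscosity inequalities by $\mathcal{H}$ is legitimate. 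Second, the zero-order term $\varepsilon u$ endows the rescaled equation $\varepsilon u + F(x,D^2u) = f/\mathcal{H}$ with strict monotonicity in $u$, which is the key property that will ultimately force the contradiction.

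I would argue by contradiction: suppose the conclusion fails and let $M>0$ denote the supremum of the witnessing difference of the two solutions; by the boundary hypothesis and continuity, $M$ is attained at some interior point of $\Omega$. Introducing the standard penalization
\[
\Phi_\alpha(x,y)\;\defeq\;u_{\mathrm{sub}}(x)-u_{\mathrm{sup}}(y)-\tfrac{\alpha}{2}|x-y|^{2},\qquad (x,y)\in\overline\Omega\times\overline\Omega,
\]
(writing $u_{\mathrm{sub}}$, $u_{\mathrm{sup}}$ for the sub- and supersolution, in the natural comparison direction), let $(x_\alpha,y_\alpha)$ be a maximizer. Well-known properties of this penalization yield $\Phi_\alpha(x_\alpha,y_\alpha)\to M$, $\alpha|x_\alpha-y_\alpha|^2\to 0$, $|x_\alpha-y_\alpha|\to 0$, and $(x_\alpha,y_\alpha)\in\Omega\times\Omega$ for every $\alpha$ large enough. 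Setting $p_\alpha\defeq \alpha(x_\alpha-y_\alpha)$, the Theorem of Sums (Ishii's lemma) furnishes symmetric matrices $X_\alpha\leq Y_\alpha$ with
\[
(p_\alpha,X_\alpha)\in\overline{J}^{2,+}u_{\mathrm{sub}}(x_\alpha),\qquad (p_\alpha,Y_\alpha)\in\overline{J}^{2,-}u_{\mathrm{sup}}(y_\alpha).
\]
Feeding these jets into the viscosity inequalities, dividing each through by the strictly positive factor $\mathcal{H}(\cdot,\varepsilon+p_\alpha)$, and subtracting the super- from the sub-inequality produces
\[
\varepsilon\bigl(u_{\mathrm{sub}}(x_\alpha)-u_{\mathrm{sup}}(y_\alpha)\bigr)\;\leq\; \bigl[F(y_\alpha,Y_\alpha)-F(x_\alpha,X_\alpha)\bigr]+\mathcal{R}_\alpha,
\]
where $\mathcal{R}_\alpha$ collects the remainder $f(x_\alpha)/\mathcal{H}(x_\alpha,\varepsilon+p_\alpha)-f(y_\alpha)/\mathcal{H}(y_\alpha,\varepsilon+p_\alpha)$ and vanishes as $\alpha\to\infty$ by continuity of $f$ on $\overline\Omega$ and of $\mathcal{H}$ in $x$ at the regularized gradient (thanks to \eqref{Cont-H}).

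The main technical hurdle is then to show that the Hessian remainder satisfies $\limsup_{\alpha\to\infty}\bigl[F(y_\alpha,Y_\alpha)-F(x_\alpha,X_\alpha)\bigr]\leq 0$. I would handle this with the standard splitting
\[
F(y_\alpha,Y_\alpha)-F(x_\alpha,X_\alpha)=\underbrace{[F(y_\alpha,Y_\alpha)-F(y_\alpha,X_\alpha)]}_{(\ast)}+\underbrace{[F(y_\alpha,X_\alpha)-F(x_\alpha,X_\alpha)]}_{(\ast\ast)},
\]
where $(\ast\ast)\to 0$ by continuity of $F$ in $x$ (assumption (A0)), compactness of $\overline\Omega$, and the matrix bounds of Ishii's lemma on $\|X_\alpha\|$, while $(\ast)$ is controlled from above by $\mathscr{P}^{+}_{\lambda,\Lambda}(Y_\alpha-X_\alpha)$ via uniform ellipticity (A1). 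The most delicate point -- and the step where the quantitative form of Ishii's lemma is indispensable -- is to drive this Pucci term to $0$ by choosing the auxiliary parameter in the lemma in concert with the scaling estimate $\alpha|x_\alpha-y_\alpha|^2\to 0$. Passing to the limit $\alpha\to\infty$ in the displayed inequality then produces $\varepsilon M\leq 0$, contradicting $M>0$ and closing the proof.
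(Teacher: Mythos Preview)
Your overall scheme---doubling of variables, Ishii's lemma, dividing through by the strictly positive factor $\mathcal{H}(x,\varepsilon+\xi)\geq L_1\varepsilon^{p}$, and exploiting the $\varepsilon u$ term for strict monotonicity---matches the paper's. The gap is in your treatment of the Hessian remainder.

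You split $F(y_\alpha,Y_\alpha)-F(x_\alpha,X_\alpha)=(\ast)+(\ast\ast)$ and propose to drive both pieces to $0$. For $(\ast\ast)$ you appeal to ``continuity of $F$ in $x$ (assumption (A0)) \ldots and the matrix bounds of Ishii's lemma on $\|X_\alpha\|$'', but the theorem of sums only yields $\|X_\alpha\|\leq C\alpha$, which diverges as $\alpha\to\infty$; bare continuity therefore cannot force $(\ast\ast)\to 0$. (The paper in fact invokes the quantitative modulus (A2) at this step, writing $|F(x_\delta,\mathrm{Y})-F(y_\delta,\mathrm{Y})|\leq \mathrm{C}_{\mathrm{F}}\,\omega(|x_\delta-y_\delta|)\|\mathrm{Y}\|$, even though (A2) is not listed among the theorem's stated hypotheses.) For $(\ast)$ your plan to ``drive $\mathscr{P}^{+}_{\lambda,\Lambda}(Y_\alpha-X_\alpha)$ to $0$'' via the auxiliary parameter is neither standard nor justified: Ishii's lemma gives $X_\alpha\leq Y_\alpha$ but no smallness for $Y_\alpha-X_\alpha$.

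The paper avoids this second difficulty by a simpler move: rather than estimating the size of $(\ast)$, it uses the ordering $X_\alpha\leq Y_\alpha$ together with degenerate ellipticity of $F$ (a consequence of (A1)) to observe that the matrix-difference contribution has a \emph{definite sign}, and simply discards it. This reduces the Hessian remainder to a pure $x$-oscillation of $F$ at a \emph{single} matrix, namely $F(x_\delta,\mathrm{Y})-F(y_\delta,\mathrm{Y})$, which is then controlled by (A2). Your splitting, by contrast, commits you to controlling the magnitude of $Y_\alpha-X_\alpha$, information the doubling scheme does not supply.
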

\begin{proof}
	We shall prove this result by contradiction. For this end, suppose that such a statement is false. Then, we can assume that
	$$
	\mathrm{M}_0\coloneqq\max_{x\in\overline{\Omega}}(u_1-u_2)(x)>0.
	$$
	
	For $\delta>0$, let us define
	$$
	\mathrm{M}_{\delta}\coloneqq \max_{x,y\in\overline{\Omega}}\left[u_1(x)-u_2(y)-\frac{|x-y|^2}{2\delta}\right].
	$$
	Assume that the maximum $\mathrm{M}_\delta$ is attained in a point $(x_\delta, y_\delta)\in \overline{\Omega}\times\overline{\Omega}$ and notice that $\mathrm{M}_\delta\geq \mathrm{M}_0$.
	
	From \cite[Lemma 3.1]{CIL92}, we have that
	\begin{equation}\label{limit-CP}
	\lim_{\delta\to0}\frac{|x_\delta-y_\delta|}{\delta}=0.
	\end{equation}
	This implies that $x_\delta,y_\delta\in\Omega$ for $\delta$ sufficiently small. Moreover, \cite[Theorem 3.2 and Proposition 3 ]{CIL92} assures the existence of a limiting super-jet $\left(\frac{x_\delta-y_\delta}{\delta}, \mathrm{X}\right)$ of $u_1$ at $x_\delta$ and a limiting sub-jet $\left(\frac{x_\delta-y_\delta}{\delta}, \mathrm{Y}\right)$ of $u_2$ at $y_\delta$, where the matrices $\mathrm{X}$ and $\mathrm{Y}$ satisfies the inequality
	\begin{equation}\label{matrices-ine}
		-\frac{3}{\delta}
		\begin{pmatrix}
			\mathrm{Id}_n& 0 \\
			0& \mathrm{Id}_n
		\end{pmatrix}
		\leq
		\begin{pmatrix}
			\mathrm{X}&0\\
			0&-\mathrm{Y}
		\end{pmatrix}
		\leq
		\frac{3}{\delta}
		\begin{pmatrix}
			\mathrm{Id}_n & -\mathrm{Id}_n \\
			-\mathrm{Id}_n& \mathrm{Id}_n
		\end{pmatrix},	
	\end{equation}
where $\mathrm{Id}_n$ is the identity matrix.

Observe that the assumption (A1) implies that the operator $F$ is, in particular, degenerate elliptic. It implies that,
\begin{equation}\label{MonotF}
  F(x, \mathrm{X})\geq F(x, \mathrm{Y})
\end{equation}
for every $x\in\Omega$ fixed, since $\mathrm{X}\leq \mathrm{Y}$ from \eqref{matrices-ine}.

Therefore, as a consequence of the two viscosity inequalities (and sentence \eqref{MonotF})
$$
	\mathcal{H}\left(x_\delta, \varepsilon+\frac{|x_\delta-y_\delta|}{\delta}\right)[\varepsilon u_1(x_\delta) + F(x_\delta, \mathrm{X})]  \leq  f(x_\delta)
$$
and
$$
\mathcal{H}\left(y_\delta, \varepsilon+\frac{|x_\delta-y_\delta|}{\delta}\right)[\varepsilon u_2(y_\delta) + F(y_\delta, \mathrm{Y})]  \geq  f(y_\delta)
$$
we get
\begin{equation}\label{estl-CP}
  \begin{array}{rcl}
    \frac{\varepsilon \mathrm{M}_0}{2} & \le & \varepsilon (u_1(x_\delta)-u_2(y_\delta)) \\
     & \le & \frac{f(x_\delta)}{\mathcal{H}\left(x_\delta, \varepsilon+\frac{|x_\delta-y_\delta|}{\delta}\right)}+[F(x_\delta, \mathrm{Y})-F(y_\delta, \mathrm{Y})]-\frac{f(y_\delta)}{\mathcal{H}\left(y_\delta, \varepsilon+\frac{|x_\delta-y_\delta|}{\delta}\right)}
  \end{array}
\end{equation}

Now, observe that, from assumption (A2), we can estimate
\begin{equation}\label{est2-CP}
|F(x_\delta, \mathrm{Y})-F(y_\delta, \mathrm{Y})|\leq \mathrm{C}_{\mathrm{F}}\omega(|x_\delta-y_\delta|)\|\mathrm{Y}\|.
\end{equation}
Moreover, if $\omega_f$ is a modulus of continuity of $f$ on $\overline{\Omega}$, from assumptions \eqref{1.2} and \eqref{Cont-H} we obtain

\begin{equation}\label{est3-CP}
	\begin{aligned}
		&\frac{f(x_\delta)}{\mathcal{H}\left(x_\delta, \varepsilon+\frac{|x_\delta-y_\delta|}{\delta}\right)}-\frac{f(y_\delta)}{\mathcal{H}\left(y_\delta, \varepsilon+\frac{|x_\delta-y_\delta|}{\delta}\right)}\\
		\leq&\frac{f(x_\delta)-f(y_\delta)}{\mathcal{H}\left(x_\delta, \varepsilon+\frac{|x_\delta-y_\delta|}{\delta}\right)}+f(y_\delta)\left[\frac{1}{\mathcal{H}\left(x_\delta, \varepsilon+\frac{|x_\delta-y_\delta|}{\delta}\right)}-\frac{1}{\mathcal{H}\left(y_\delta, \varepsilon+\frac{|x_\delta-y_\delta|}{\delta}\right)}\right]\\
		\leq&\frac{\omega_f(|x_\delta-y_\delta|)}{L_1 \cdot \mathcal{K}_{p, q, \mathfrak{a}}\left(x_\delta, \left|\varepsilon+\frac{|x_\delta-y_{\delta}|}{\delta}\right|\right)}\\
		+&\|f\|_{L^{\infty}(\Omega)}\frac{\left|\mathcal{H}\left(y_\delta, \varepsilon+\frac{|x_\delta-y_\delta|}{\delta}\right)-\mathcal{H}\left(x_\delta, \varepsilon+\frac{|x_\delta-y_\delta|}{\delta}\right)\right|}{L^2_1 \cdot \mathcal{K}_{p, q, \mathfrak{a}}\left(x_\delta, \left|\varepsilon+\frac{|x_\delta-y_{\delta}|}{\delta}\right|\right)\mathcal{K}_{p, q, \mathfrak{a}}\left(y_\delta, \left|\varepsilon+\frac{|x_\delta-y_{\delta}|}{\delta}\right|\right)}\\
		\leq&\frac{\omega_f(|x_\delta-y_\delta|)}{L_1 \left|\varepsilon+\frac{|x_\delta-y_{\delta}|}{\delta}\right|^p} + \mathrm{C}_{\mathfrak{a}}\|f\|_{L^{\infty}(\Omega)}\frac{\omega_{\mathfrak{a}}(|x_{\delta}-y_{\delta}|)\left|\varepsilon+\frac{|x_\delta-y_{\delta}|}{\delta}\right|^q}{L^2_1 \left|\varepsilon+\frac{|x_\delta-y_{\delta}|}{\delta}\right|^{2p}}
	\end{aligned}
\end{equation}

In conclusion, by combining the sentences \eqref{estl-CP}, \eqref{est2-CP} and \eqref{est3-CP} we reach
$$
\begin{array}{rcl}
  \frac{\varepsilon \mathrm{M}_0}{2} & \le & \mathrm{C}_{\mathrm{F}}\omega(|x_\delta-y_\delta|)\|\mathrm{Y}\| + \frac{\omega_f(|x_\delta-y_\delta|)}{L_1 \left|\varepsilon+\frac{|x_\delta-y_{\delta}|}{\delta}\right|^p} + \mathrm{C}_{\mathfrak{a}}\|f\|_{L^{\infty}(\Omega)}\frac{\omega_{\mathfrak{a}}(|x_{\delta}-y_{\delta}|)\left|\varepsilon+\frac{|x_\delta-y_{\delta}|}{\delta}\right|^q}{L^2_1 \left|\varepsilon+\frac{|x_\delta-y_{\delta}|}{\delta}\right|^{2p}} \\
   & = & \text{o}(1) \quad \text{as}\quad \delta \to 0^{+},
\end{array}
$$
which yields a contradiction, thereby proving the Comparison Principle by letting $\varepsilon \to 0^{+}$.
\end{proof}

\subsection*{Acknowledgments}

\hspace{0.65cm} J.V. da Silva and G.C. Ricarte have been partially supported by Conselho Nacional de Desenvolvimento Cient\'{i}fico e Tecnol\'{o}gico (CNPq-Brazil) under Grants No. 310303/2019-2 and No. 303078/2018-9. G.C. Rampasso is partially supported by CAPES-Brazil. This research was financed in part by the Coordena\c{c}\~{a}o de Aperfei\c{c}oamento de Pessoal de N\'{i}vel Superior - (CAPES - Brazil) - Finance Code 001. H. Vivas was partially supported by Consejo Nacional de Investigaciones Cient\'{i}ficas y T\'{e}cnicas (CONICET-Argentina).


\end{document}